\newtheorem{thm}{Theorem}[section]
\newtheorem{prop}[thm]{Proposition}
\newtheorem{lem}[thm]{Lemma}
\newtheorem{defn}[thm]{Definition}
\newtheorem{example}[thm]{Example}
\newtheorem{notation}[thm]{Notation}
\newtheorem{remark}[thm]{Remark}
\newcommand{\G}{\mathbb{G}}
\newcommand{\Pe}{\mathbb{P}}
\newcommand{\K}{\mathbb{K}}
\newcommand{\C}{\mathbb{C}}
\newcommand{\Z}{\mathbb{Z}}
\newcommand{\Sc}{\mathbb{S}}
\newcommand{\QQ}{\mathcal{Q}}
\newcommand{\OO}{\mathcal{O}}
\newcommand{\s}{\mathcal{S}}
\newcommand{\dual}{^\vee}
\newcommand{\F}{\mathcal{F}}
\newcommand{\sym}{\mathfrak{S}}
      \def\@setcopyright{}
      \def\serieslogo@{}
\begin{document}

\author{
Enrique Arrondo$^{1}$, Alicia Tocino$^{2}$\\ \\
\small{$^{1}$arrondo@mat.ucm.es}\\
\small{ Universidad Complutense de Madrid, Departamento de \'Algebra,}\\ 
\small{Ciudad Universitaria, Plaza Ciencias, 3, 28040 Madrid, Spain}\\
\small{$^{2}$aliciatocinosanchez@ucm.es}\\ 
\small{ Universidad de M\'alaga, Departamento de \'Algebra, Geometr\'a y Topolog\'ia,}\\ 
\small{Bulevar Louis Pasteur, 31, 29010 M\'alaga, Spain}}
\date{July 2019}

\title{Cohomological characterization of Universal Bundles of $\G(1,n)$}

\maketitle

\begin{abstract}
We characterize directs sums of twists of symmetric powers of the universal quotient  bundle over the Grassmannian of lines. We use a method that could be used for analogue results on any arbitrary variety, and that should give stronger results than using the standard technique of Beilinson's spectral sequence.\\
\textbf{Keywords}: Grassmannian, cohomology, universal bundles, Serre duality, Eagon-Northcott complexes, Schur Functor.\\
\textbf{MSC codes}: Primary 15A69; Secondary 15A72, 20G05.

\end{abstract}
\fancypagestyle{plain}{
\fancyhead[L]{}
\fancyhead[C]{}
\fancyhead[R]{}
\fancyfoot[L]{ This research was partly supported by an FPU grant of the University Complutense of Madrid (2011-2015) and a Spanish government research project, number MTM2012-32670}
\fancyfoot[C]{}
\fancyfoot[R]{}
\renewcommand{\headrulewidth}{0pt}
\renewcommand{\footrulewidth}{0.5pt}
}
\pagestyle{fancy}

\section{Introduction}
One of the key results to understand the structure of vector bundles over a projective space is  Horrocks' criterion (see \cite{HOR}): \textit{A vector bundle $F$ over $\Pe^n$ splits as a direct sum of line bundles if and only if it does not have intermediate cohomology (i.e. $H^j_*(F)=0$ for $j=1,2,\ldots,n-1$).} There has been many proofs and generalizations of this theorem, using many different techniques, like restriction to hyperplanes, regularity theory or Beilinson theorem (in the general framework of derived categories). One interpretation of Horrocks' theorem  is to regard it as a characterization of arithmetically Cohen-Macaulay (aCM for short) vector bundles (i.e. without intermediate cohomology) over the projective space. 

In this paper we will deal with the second point of view of Horrocks' theorem, namely as a criterion to characterize vector bundles that split as a direct sum of line bundles. Such a characterization has been extended to other projective varieties. For example, G. Ottaviani characterized such vector bundles over Grassmannians and smooth quadrics (see \cite{OTT2} and \cite{OTT} and Theorem \ref{teorema ottaviani split} for the particular case of Grassmannian of lines). 

An improvement of Ottaviani's criterion, which will be the starting point of this paper was given by E. Arrondo and F. Malaspina in \cite{AM} (Theorem \ref{teorema de AM}). The idea behind their proof (and similar ones in that paper) is quite simple: if a vector bundle $F$ has a direct summand $\OO(l)$, this is equivalent to having maps $\OO(l)\to F$ and $F\to\OO(l)$ whose composition is not zero. The tricky part is to relate the composition pairing $Hom(\OO(l),F)\times Hom(F,\OO(l))\to Hom(\OO(l),\OO(l))$ with the perfect pairing given by Serre duality.

As it is also done in \cite{AM}, one could be interested in characterizing cohomologically other vector bundles, and not only line bundles. For instance, in $2005$, L. Costa and R. M. Mir\'o-Roig gave a characterization, for a given Schur functor, of the vector bundle $\Sc_\lambda \QQ$, where $\QQ$ is the universal quotient bundle of rank $k+1$ over the  Grassmannian $\G(k,n)$ (see \cite{MiroRoig2} and \cite{MiroRoig}). This characterization is done in terms of the other universal vector bundle (of rank $n-k$) and characterizes precisely $\oplus \Sc_\lambda\QQ$ but not its twists. However we will be interested in using the same universal bundle $\QQ$ for the characterization, and we also want the characterization to be up to a twist, as it happens for Horrocks' theorem.

A result of the type we are looking for is the following by E. Arrondo and B. Gra\~na (see \cite{AG}):
\textit{Let $F$ be a vector bundle over $\G(1,4)$. Then $F$ can be expressed as 
$\oplus\OO(l_{i_0})\bigoplus\oplus\QQ(l_{i_1})$ if and only if
neither $F$ nor $F\otimes \QQ$ do not have intermediate cohomology.} The idea for obtaining such result was as follows. First, one looks at Theorem \ref{teorema ottaviani split}, which provides a characterization of the direct sums of line bundles. Since $\QQ$ does not split, there must be some hypothesis of that theorem that it does not satisfy. There is precisely one of them, and one manage to prove (by techniques that are not relevant here) that the rest of the hypotheses work to characterize directs sums of twists of $\OO$ and $\QQ$. In fact, the authors continue to remove conditions until getting some description (very far from an actual classification) of aCM vector bundles over $\G(1,4)$.

In this paper, we want to give a cohomological characterization for direct sums of twists of symmetric product of the quotient bundle $\QQ$ in the Grassmannian of lines $\G(1,n)$. This is the main result of the second author's PhD thesis \cite{Toc}. We will give such a classification for symmetric powers of order not bigger than $n-2$. The original reason was that a symmetric power $S^k\QQ$ is aCM if and only if $k\le n-2$, so that our result could be a help to understand aCM vector bundles over $\G(1,n)$. However such restriction is needed anyway, since our proof has some obstruction when $k>n-2$. 

As main ideas, we will use those from \cite{AG} and \cite{AM} that we already pointed out. Specifically, we will start from the splitting criterion of Theorem \ref{teorema de AM} (which is stronger than the one of  Theorem \ref{teorema ottaviani split}). Contrary to what happened in the proof of the characterization given by E. Arrondo and B. Gra\~na, removing the only hypothesis of Theorem \ref{teorema de AM} will not be now sufficient to characterize direct sums of twists of $\OO$ and $\QQ$, so we will need to add more hypotheses for such characterization. As in  \cite{AM}, the idea to find direct summand of $\QQ(l)$ will be to relate it with Serre duality.
With this ideas in mind, one can go on, removing in each step one condition and adding few ones, until arriving to the wanted classification.

We start the paper with a section of preliminaries, in which we recall the main results about Grassmannians, their universal bundles, the notation of Schur functors and Bott's algorithm. In section 3, we explain the general method by giving the splitting criterion for Grassmannians of lines. In section 4 we prove our main result. We finish with a section of remarks, in which we show how our method gives much stronger results than derived categories, explain how our method will work in general, and propose the way to extend it to an arbitrary Grassmannian.

\textbf{Acknowledgments.} We would like to thank G. Ottaviani for many useful conversations and hints. We also thank A. Kuznetsov, who suggested how to use derived categories to get a similar result.

\section{Preliminaries}
\subsection{Notation and definitions}

Let $V$ be a $(n+1)-$dimensional vector space over an algebraically closed field $\K$ of characteristic zero and let $\Pe^n=\Pe(V)$ be projective space  of all the hyperplanes in $V$, or equivalently we can consider $\Pe^n=\Pe(V)$ as the set of the lines in $V^*$.

We define the \textit{Grassmann variety} or \textit{Grassmannian}, $\G(k,n)=\G(k,\Pe^n)$, as the set consisting of all $k-$dimensional linear subspaces of $\Pe^n$. This variety is naturally identified with the set of $(k+1)-$dimensional linear subspaces of the dual $V^*$ or even with the set of $(k+1)-$dimensional quotients of $V$.

\begin{notation}{\rm
We will use $^*$ for denoting the dual of a vector space and $\dual$ for denoting the dual of a vector bundle. As usual, we will not distinguish between a vector bundle and its locally free sheaf of sections.}

\end{notation}

We consider $\QQ\dual=\{(v,\Lambda)\in V^*\times \G\, |\, v\in \Lambda\}$ and $V^*\times \G/\QQ\dual\simeq \s$ the \textit{universal vector bundles} of $\G(k,n)$ of rank $k+1$ and $n-k$ respectively. We also consider the \textit{universal exact sequence}:
\begin{equation*}\label{sucesionexactauniversal}
\xymatrix{ 
	0 \ar[r] & \s\dual \ar[r]^\varphi & V\otimes \OO \ar[r]^\phi & \QQ \ar[r] & 0}
\end{equation*}

\begin{remark}\label{isomorfismosentrefibradosuniversales}{\rm 
We recall some natural isomorphisms between the universal bundles over the Grassmannians $\G(k,n)$:
\begin{itemize}
	\item $\QQ\dual\simeq \bigwedge^{k}\QQ(-1)$. For the particular case $k=1$ we get $\QQ\dual\simeq \QQ(-1)$ and if we apply the $j-$th symmetric power results $S^j\QQ\dual\simeq(S^j\QQ)(-j)$.
	\item $\bigwedge^{j}\s\dual\simeq \bigwedge^{n-k-j}\s(-1)$ (where $\bigwedge^j$ denotes the $j-$th wedge power).
\end{itemize}  }
\end{remark}

\subsection{Eagon-Northcott complexes}
The following exact sequence is the \textit{Eagon-Northcott complexes} associated to the universal exact sequence $\G(k,n)$:\begin{equation}
\resizebox{\textwidth}{!}{
\xymatrix{ 
0\longrightarrow S^j \QQ\dual\longrightarrow  V^* \otimes S^{j-1}\QQ\dual \longrightarrow \bigwedge^2 V^*\otimes S^{j-2}\QQ\dual\longrightarrow\ldots \longrightarrow \bigwedge^{j-1} V^*\otimes\QQ\dual \longrightarrow  \bigwedge^{j}V^*\otimes \OO \longrightarrow \bigwedge^j\s \longrightarrow 0
}} \tag{$R\dual_j$}
\end{equation}
By dualizing $(R_j\dual)$ we give its dual complex $(R_j)$:
\begin{equation}
\resizebox{\textwidth}{!}{
\xymatrix{ 
0\longrightarrow \bigwedge^j \s\dual\longrightarrow \bigwedge^j V \otimes\OO \longrightarrow \bigwedge^{j-1}V\otimes\QQ \longrightarrow \ldots \longrightarrow \bigwedge^2 V\otimes S^{j-2}\QQ \longrightarrow  V\otimes S^{j-1}\QQ\longrightarrow S^j\QQ \longrightarrow 0
}} \tag{$R_j$}
\end{equation}
We use them often throughout the paper.

\subsection{Schur Functors}
We introduce the notion of Young diagram, its corresponding irreducible representation and also its corresponding Schur functor (in particular we define $\Sc_\lambda\QQ$). See \cite{FH} and \cite{WEY}.

\begin{defn}\label{def young1}
\textnormal{To a partition $\lambda$ is associated a \textit{Young diagram} denoted by
$\lambda=(\lambda_1,\lambda_2,\ldots,\lambda_m)$, where $\lambda_1\geq\lambda_2\geq\ldots\geq\lambda_m\geq 0$. It 
consists of a collection of boxes ordered in 
consecutive rows, where the $i-$th row has exactly $\lambda_i$ boxes (the rows of boxes are aligned top-left). The number of boxes of $\lambda$ is denoted by 
$|\lambda|=\lambda_1+\lambda_2+\ldots+\lambda_m$. }
\end{defn}

\begin{defn}\label{def young3}
\textnormal{For a given Young diagram we can number the boxes. Any filling of $\lambda$ with numbers is called a \textit{Young tableau}.}
\end{defn} 
Young diagrams can be used to describe projection operators for the regular representation, which will then give the irreducible representations of $\sym_d$.
Just to fix convention, for a given Young diagram, number 
the boxes consecutively  (from left to right and top to bottom). Here we use all numbers from $1$ to $d$ in order to fill $d$ boxes. 
More generally, a tableau can allow repetitions
of numbers. Each filling describes a vector in $V^{\otimes d}$.	
\begin{equation*}
\ytableausetup
{mathmode, boxsize=1.25em}
\begin{ytableau}
 {1} & {2} & {3} & {4} & {5} \\
 {6} & {7} & {8}  \\
 {9} & {10} & {11} \\
 {12} & {13}   \\
 {14}
\end{ytableau}
\end{equation*}
Given a tableau, say the canonical one shown, define two subgroups of the symmetric group (due to the filling, we can consider the elements of $\sym_d$ as permuting the boxes and the representations constructed will be isomorphic to the ones made with the canonical tableau):
$$P=P_\lambda=\{g\in\sym_d:\, g \text{ preserves each row}\}$$
and
$$Q=Q_\lambda=\{g\in\sym_d:\,g \text{ preserves each column}\}$$
They depend on $\lambda$ but also on the filling of $\lambda$.
In the group algebra $\C\sym_d$, we introduce two elements corresponding to the subgroups and we set
$$a_\lambda=\sum_{g\in P}e_g\quad\text{ and }\quad b_\lambda=\sum_{g\in Q}sgn(g)\cdot e_g$$
To see what $a_\lambda$ and $b_\lambda$ do, observe that if $V$ is any vector space and $\sym_d$ acts on the $d-$th tensor power $V^{\otimes d}$ by permuting factors, the image of the element $a_\lambda\in\C\sym_d\to End(V^{\otimes d})$ is just the subspace
$$Im(a_\lambda)=S^{\lambda_1}V\otimes S^{\lambda_2}V\otimes\cdots\otimes S^{\lambda_k}V\subset V^{\otimes d}$$
where the inclusion on the right is obtained by grouping the factors of $V^{\otimes d}$ according to the rows of the Young tableau. Similarly, the image of $b_\lambda$ on this tensor power is
$$Im(b_\lambda)=\bigwedge^{\mu_1}V\otimes\bigwedge^{\mu_2}V\otimes\ldots\otimes\bigwedge^{\mu_r}V\subset V^{\otimes d}$$
where $\mu$ is the conjugate partition to $\lambda$.
This is because $\sym_d$ acts on the $d-$th tensor power $V^{\otimes d}$ by permuting factors.

\begin{defn}
\textnormal{Finally, we set $c_\lambda=a_\lambda\cdot b_\lambda\in\C\sym_d$
that is called the \textit{Young symmetrizer} corresponding to $\lambda$.
We denote the image of 
$c_\lambda$ on $V^{\otimes d}$ by $\Sc_\lambda V=Im(c_\lambda|_{V^{\otimes d}})$. We call the functor that associates $V\leadsto \Sc_\lambda V$ the \textit{Schur functor} corresponding to $\lambda$.}
\end{defn}

\begin{example} \label{1} \textnormal{Let us see some examples of Schur functors.
\begin{itemize}
	\item (\textit{Symmetric power}) For the partition $\lambda=(d)$ corresponds to the functor $V\leadsto S^d V$
	\begin{equation*}
	\Sc_{(d)} V=S^d V \Longrightarrow \ytableausetup
	{mathmode, boxsize=1.25em}
	\underbrace{\begin{ytableau}
	{} & {} & {} & \none[\ldots]
	& {} & {}
	\end{ytableau}}_{d \text{ boxes}}
	\end{equation*}
	\item (\textit{Exterior power}) For the partition $\lambda=(\underbrace{1,\ldots,1}_{d})$ corresponds to the functor $V\leadsto \bigwedge^d V$
	\begin{equation*}
	\Sc_{\underbrace{(1,1,\ldots,1)}_{d}} V=\bigwedge^d V \Longrightarrow \ytableausetup
	{mathmode, boxsize=1.25em}
	\left.
	\begin{array}{rrr}
	\begin{ytableau}
		{}\\
		{} \\
		{} \\
		\none[ \vdots]\\
		{}\\
		 {}
		\end{ytableau}
	\end{array}
	\right\}\text{d boxes}
	\end{equation*}
\end{itemize}}
\end{example}

\begin{remark}
\textnormal{Notice that the only Schur functor of a vector space $V$ of dimension $2$ is just a symmetric power with some particular twist. This gives the Schur functor form of the universal bundle $\QQ$ of rank $2$.
\begin{equation*}
S^jV(i)\leadsto
\ytableausetup
{mathmode, boxsize=1.25em}
\underbrace{\begin{ytableau}
{} & {} & {} & {} & {} &\none[\dots]
& {} & {} \\
{} & {} & {} & {} & {} & \none[\dots]
& {} & {}
\end{ytableau}}_{i}
\underbrace{\begin{ytableau}
{} & {} & {} & {} & &\none[\dots]
& {} & {} 
\end{ytableau}}_{j}
\leadsto \Sc_{(i+j,i)}V
\end{equation*}}
\end{remark}

\subsection{Bott's algorithm}
We state the algorithm to compute the cohomology of $\Sc_\lambda\QQ\otimes\Sc_\mu\s\dual$ (or $\Sc_\eta\s\otimes \Sc_\alpha\QQ\dual$) over the Grassmannians $\G(k,n)$. For any partitions $\lambda=(\lambda_0,\ldots,\lambda_k)$ and $\mu=(\mu_{k+1},\ldots,\mu_n)$, we set
$$\nu_{(\lambda,\mu)}:=\Sc_\lambda\QQ\otimes\Sc_\mu\s\dual.$$
By considering $\eta=(\eta_1,\ldots,\eta_{n-k})$ and $\alpha=(\alpha_{n-k+1},\ldots,\alpha_{n+1})$ instead of $\lambda$ and $\mu$ we set
$$\nu_{(\eta,\alpha)}:=\Sc_\eta\s\otimes \Sc_\alpha\QQ\dual.$$
From Remark 4.1.5, Corollary 4.1.7 and Corollary 4.1.9 of \cite{WEY} we get the following algorithm that gives the values of $H^j(\Sc_\lambda\QQ\otimes\Sc_\mu\s\dual)$ (or $H^j(\Sc_\eta\s\otimes \Sc_\alpha\QQ\dual)$) for all $j\geq 0$.

\begin{prop}\label{bott}(Bott's algorithm)
Let $\nu:=(\lambda_0,\ldots,\lambda_k,\mu_{k+1},\ldots,\mu_n)$ and set $j=0$.
We iterate the following two steps:
\begin{enumerate}
	\item If $\nu$ is a partition, then $H^j(\Sc_\lambda\QQ\otimes\Sc_\mu\s\dual)=\Sc_\nu \K^{n+1}$ and $H^i(\Sc_\lambda\QQ\otimes\Sc_\mu\s\dual)=0$
	for all $i\neq j$
	\item If $\nu$ is not a partition, then consider the first $l$ such that $\nu_l<\nu_{l+1}$. Two possibilities can occur:
	\begin{itemize}
		\item If $\nu_{l+1}-\nu_{l}=1$, then $H^i(\Sc_\lambda\QQ\otimes\Sc_\mu\s\dual)=0$ for all $i\geq 0$  
		\item If $\nu_{l+1}-\nu_{l}\neq 1$, then consider $\nu:=(\nu_1,\ldots,\nu_{l-1},\nu_{l+1}-1,\nu_l+1,\nu_{l+2},\ldots,\nu_{n+1})$ and $j=j+1$, and go back to step $(1).$
	\end{itemize}  
\end{enumerate}
\end{prop}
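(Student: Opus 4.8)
The plan is to deduce Proposition~\ref{bott} from the Borel--Weil--Bott theorem, in the shape in which it is packaged in Chapter~4 of \cite{WEY}; equivalently, one can invoke Remark~4.1.5 together with Corollaries~4.1.7 and~4.1.9 there directly. The first step is to translate the geometry into a weight. The Grassmannian $\G(k,n)$ is the homogeneous space $GL(V)/P$, where $P$ stabilizes a fixed $(k+1)$-dimensional quotient of $V$; its Levi factor is $GL_{k+1}\times GL_{n-k}$, and $\QQ$ and $\s\dual$ are the equivariant bundles induced by the two tautological representations of this Levi. Hence, for partitions $\lambda=(\lambda_0,\ldots,\lambda_k)$ and $\mu=(\mu_{k+1},\ldots,\mu_n)$, the bundle $\Sc_\lambda\QQ\otimes\Sc_\mu\s\dual$ is the $GL(V)$-equivariant bundle associated with the irreducible $P$-module of highest weight $\nu=(\lambda_0,\ldots,\lambda_k,\mu_{k+1},\ldots,\mu_n)\in\Z^{n+1}$; this $\nu$ is dominant for the Levi but in general not for $GL(V)$, which is exactly what produces higher cohomology. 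The bundle $\Sc_\eta\s\otimes\Sc_\alpha\QQ\dual$ is handled identically after using the identities of Remark~\ref{isomorfismosentrefibradosuniversales} relating $\s$ with $\s\dual$ (equivalently, taking the opposite ordering of the tautological flag), so it is enough to treat $\nu_{(\lambda,\mu)}$.

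The second step is Bott's theorem itself. Set $\rho=(n,n-1,\ldots,1,0)$. Borel--Weil--Bott gives: if the vector $\nu+\rho$ has two equal entries, then $H^i(\Sc_\lambda\QQ\otimes\Sc_\mu\s\dual)=0$ for every $i\ge 0$; otherwise $\nu+\rho$ has pairwise distinct entries, there is a unique $w\in\sym_{n+1}$ with $w(\nu+\rho)$ strictly decreasing, and then
$$H^{\ell(w)}\bigl(\Sc_\lambda\QQ\otimes\Sc_\mu\s\dual\bigr)=\Sc_{\,w(\nu+\rho)-\rho}\,\K^{n+1},\qquad H^i\bigl(\Sc_\lambda\QQ\otimes\Sc_\mu\s\dual\bigr)=0\ \text{ for }i\neq\ell(w),$$
where $\ell(w)$ is the number of inversions of $w$. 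What remains is to check that the two-step loop of the statement is a concrete implementation of this recipe.

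For that, I would translate the sorting of $\nu+\rho$ into operations on $\nu$ itself. Writing $\rho_i=n+1-i$, one has $(\nu+\rho)_l-(\nu+\rho)_{l+1}=\nu_l-\nu_{l+1}+1$, so $(\nu+\rho)_l=(\nu+\rho)_{l+1}$ is equivalent to $\nu_{l+1}-\nu_l=1$ (the vanishing case of step (2)) and $(\nu+\rho)_l<(\nu+\rho)_{l+1}$ is equivalent to $\nu_{l+1}-\nu_l\ge 2$; in this latter case, swapping the entries of $\nu+\rho$ in positions $l$ and $l+1$ and subtracting $\rho$ again replaces $(\nu_l,\nu_{l+1})$ by $(\nu_{l+1}-1,\nu_l+1)$, precisely the replacement of step (2). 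Moreover $\nu$ being a partition is equivalent to $\nu+\rho$ being strictly decreasing (the halting test of step (1)), and picking the \emph{first} index $l$ with $\nu_l<\nu_{l+1}$ amounts to resolving the first ascent of $\nu+\rho$. Since swapping an adjacent ascent of a sequence with distinct entries lowers its number of inversions by exactly one while leaving the inversion status of every other pair unchanged, the loop terminates after at most $\binom{n+1}{2}$ iterations, the number of iterations equals the initial number of inversions of $\nu+\rho$ (hence $\ell(w)$), the output degree is $j=\ell(w)$, and the partition reached in step (1) is $w(\nu+\rho)-\rho$; this is the assertion of Proposition~\ref{bott}. The only genuinely delicate point I expect is bookkeeping the conventions consistently (which partition is attached to $\QQ$ and which to $\s\dual$, the choice of $\rho$, and the normalization identifying $\Sc_\nu\K^{n+1}$ with the correct $GL(V)$-module), rather than anything substantive; these normalizations are exactly the ones fixed in \cite{WEY}, which is why the quickest rigorous route is to quote the three results cited above.
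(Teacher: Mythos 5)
Your proposal is correct and follows essentially the same route as the paper, which simply derives the algorithm from Remark~4.1.5 and Corollaries~4.1.7 and~4.1.9 of \cite{WEY} (i.e.\ Borel--Weil--Bott for the homogeneous bundles $\Sc_\lambda\QQ\otimes\Sc_\mu\s\dual$). The only difference is that you additionally spell out the bubble-sort bookkeeping identifying the exchange step with the $\rho$-shifted adjacent transpositions and the final degree with $\ell(w)$, which the paper leaves implicit in the citation.
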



Applying Bott's algorithm we obtain the following statements (we quote \cite{Toc} for details). 
\begin{prop}\label{producto tensorial de productos simetricos}
Consider $\QQ$ the quotient bundle of rank $2$ over the Grassmannian of lines $\G(1,n)$. Then:
\begin{description}
	\item[(i)] $H^j(S^i\QQ(l))=0$ for $l\geq 0$, $i\geq 0$ and $j>0$
	\item[(ii)] $H^j(S^i\QQ(-l))=0$ for $l> 0$, $i\leq n-2$ and $j<2n-2$
\end{description}
\end{prop}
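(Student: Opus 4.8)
The plan is to prove both statements by a direct application of Bott's algorithm (Proposition \ref{bott}) to the relevant bundles on $\G(1,n)$, using the Schur-functor description of $S^i\QQ$. Recall that for $\G(1,n)$ we have $k=1$, so $\QQ$ has rank $2$, and a weight vector is $\nu=(\lambda_0,\lambda_1,\mu_2,\ldots,\mu_n)$ with $n+1$ entries. Since the only Schur functors of the rank-$2$ bundle $\QQ$ are symmetric powers with a twist, we have $S^i\QQ(l)=\Sc_{(i+l,l)}\QQ$, and twisting by $\OO(l)$ (which is $\bigwedge^2\QQ(l-1)$, so $\Sc_{(l,l)}\QQ$) simply adds $l$ to every entry of the $\QQ$-part while leaving the $\s\dual$-part as a string of $l$'s. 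Concretely, $S^i\QQ(l)$ corresponds to feeding Bott's algorithm the tuple $\nu=(i+l,\,l,\,l,\,l,\ldots,l)$ with $n-1$ trailing copies of $l$ coming from the trivial $\s\dual$-part (one can also phrase this via $\Sc_\mu\s\dual$ with $\mu=(l,\ldots,l)$).

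For part (i), $l\ge 0$ and $i\ge 0$: I would observe that the tuple $(i+l,l,l,\ldots,l)$ is \emph{already} a (weakly decreasing) partition, since $i+l\ge l$ and all remaining entries equal $l$. Hence step (1) of Bott's algorithm applies immediately with $j=0$, giving $H^0(S^i\QQ(l))=\Sc_\nu\K^{n+1}\ne 0$ and $H^i(S^i\QQ(l))=0$ for all $i\ne 0$; in particular $H^j=0$ for $j>0$, which is the claim. This case is essentially immediate.

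For part (ii), $l>0$, $i\le n-2$: now the tuple is $\nu=(i-l,\,-l,\,-l,\ldots,-l)$ after accounting for the twist by $\OO(-l)$, i.e. $\nu=(i,0,0,\ldots,0)$ shifted down by $l$ in every coordinate — equivalently $\nu=(i-l,-l,\ldots,-l)$ with $n$ entries equal to $-l$. I would run the algorithm and track how many times step (2) must be iterated before either termination (a partition appears) or a jump of size $1$ kills everything. The first descent is at position $l=1$ (the $0$-th entry $i-l$ versus the first entry $-l$, a gap of $i$): if $i=1$ the gap $\nu_{l+1}-\nu_l$ equals... one needs $\nu_{l+1}-\nu_l=-( {-l})-(i-l)$; carefully, the offending gap is $\nu_1-\nu_0=-l-(i-l)=-i$, which is not the relevant sign — so instead I would look for the first $l$ with $\nu_l<\nu_{l+1}$, which occurs among the block of equal entries and the leading entry only if $i<0$, impossible, so the leading entry is fine and the string of equal $-l$'s has no ascent either. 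The subtlety: with a twist by $\OO(-l)$ the $\s\dual$-entries are not all equal — rather $S^i\QQ\dual\simeq (S^i\QQ)(-i)$ and $\OO(-1)\simeq\bigwedge^2\QQ\dual$, so the correct tuple mixing $\QQ$ and $\s\dual$ must be recomputed. The honest approach is: write $S^i\QQ(-l)$ using Remark \ref{isomorfismosentrefibradosuniversales} in a form where Bott's input tuple has a single "out of order" pair, identify that the number of transpositions needed to sort it (equivalently, the cohomological degree where a nonzero group can appear) is at least $2n-2$ precisely when $i\le n-2$, and conclude $H^j=0$ for $j<2n-2$. I expect the main obstacle to be exactly this bookkeeping: correctly translating the twist $\OO(-l)$ into Bott's coordinate tuple, locating the first ascent, counting the iterations of step (2), and verifying that the hypothesis $i\le n-2$ is the sharp condition forcing the first possible nonvanishing degree up to $2n-2$ (this is where $n-2$ — the aCM threshold mentioned in the introduction — enters, and where the $k>n-2$ obstruction of the paper originates). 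Everything else is routine substitution into Proposition \ref{bott}.
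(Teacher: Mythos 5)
Your overall route (feed the Schur--functor form of the twisted symmetric powers into Bott's algorithm, Proposition \ref{bott}) is the same one the paper relies on (it defers the computation to Proposition 2.2.13 of \cite{Toc}), but the execution has a sign error in translating the twist that you never repair, and part (ii) --- the only nontrivial claim --- is not actually proved. In the paper's convention $\nu_{(\lambda,\mu)}=\Sc_\lambda\QQ\otimes\Sc_\mu\s\dual$, a constant block $\mu=(c,\ldots,c)$ contributes $\Sc_{(c,\ldots,c)}\s\dual=(\bigwedge^{n-1}\s\dual)^{\otimes c}=\OO(-c)$, not $\OO(c)$. Consequently your tuple $(i+l,l,l,\ldots,l)$ in (i) represents $S^i\QQ(l)\otimes\OO(-l)=S^i\QQ$, and your tuple $(i-l,-l,\ldots,-l)$ in (ii) again represents $S^i\QQ$ --- which is exactly why the latter came out already non-increasing, a conclusion that is visibly false for $S^i\QQ(-l)$ with $l$ large (it must have $H^{2n-2}\neq 0$). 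For (i) the slip is harmless, since the correct input $(i+l,l,0,\ldots,0)$ is also non-increasing and your one-line conclusion goes through; for (ii) you notice the inconsistency but then only outline a plan (``recompute the tuple, count transpositions'') without carrying it out, so the statement is left unestablished.

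What is missing is precisely the place where $i\le n-2$ enters. The correct input for $S^i\QQ(-l)$ is $\nu=(i,0,l,l,\ldots,l)$ with $n-1$ trailing $l$'s; this is not ``a single out of order pair'': for $i<l$ both of the first two entries are inverted against the whole block. Running step (2), the entry $0$ moves rightward through the block, meeting gaps $l,l-1,\ldots$, so for $1\le l\le n-1$ a gap equal to $1$ occurs and \emph{all} cohomology vanishes; for $l\ge n$ it traverses the entire block in $n-1$ steps, and at that point the leading entry satisfies $i\le n-2<l-1$, so it too is out of order and must either meet a unit gap (again total vanishing) or traverse the block as well, for a total of $2(n-1)=2n-2$ iterations before a non-increasing sequence can appear (the two passages may interleave, but every one of the $2(n-1)$ exchanges must be performed). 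Hence nonzero cohomology can only sit in degree $2n-2$, which is (ii); by contrast, when $i\ge n-1$ the leading entry can stay put and one lands in degree $n-1$, which is exactly the nonvanishing of Proposition \ref{productos simetricos con cohomologia}. Until you run the algorithm on the correct tuple and make this counting explicit (your ``at least $2n-2$ precisely when $i\le n-2$'' is asserted, not derived, and the sharpness claim belongs to the converse statement, not to (ii)), the proof of (ii) has a genuine gap.
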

\begin{proof}
See Proposition 2.2.13 of \cite{Toc}.
\end{proof}

\begin{prop}\label{productos simetricos con cohomologia}
Consider $\QQ$ the quotient bundle of rank $2$ over the Grassmannian of lines $\G(1,n)$. Then,
$$H^{n-1}(S^i\QQ(-n-r))\neq 0\quad \text{for}\quad r=0,1,\ldots,i-n+1\quad\text{and}\quad i\geq n-1.$$
\end{prop}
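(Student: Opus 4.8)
The plan is to compute $H^{n-1}(S^i\QQ(-n-r))$ directly by running Bott's algorithm (Proposition \ref{bott}) and showing that it terminates, after exactly $n-1$ mutations, at a weakly decreasing string, hence at a nonzero irreducible $\mathrm{GL}_{n+1}$-representation sitting in cohomological degree $n-1$.

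First I would set up the Bott string. Since $S^i\QQ=\Sc_{(i,0)}\QQ$ and, for $k=1$, $\OO(1)\cong\bigwedge^2\QQ\cong\Sc_{(1,1)}\QQ$ by Remark \ref{isomorfismosentrefibradosuniversales}, twisting shifts both parts and gives
\[
S^i\QQ(-n-r)\;\cong\;\Sc_{(i-n-r,\,-n-r)}\QQ\otimes\Sc_{(0,\dots,0)}\s\dual\;=\;\nu_{(\lambda,\mu)},
\]
with $\lambda=(i-n-r,\,-n-r)$ and $\mu=(0,\dots,0)$ ($n-1$ zeros). So the string fed to Proposition \ref{bott} is $\nu=(\,i-n-r,\ -n-r,\ \underbrace{0,\dots,0}_{n-1}\,)$, which has $n+1$ entries, and the hypotheses become the inequalities $i-n-r\ge -1$ (this is exactly $r\le i-n+1$), $-n-r\le -2$ (as $n\ge 2$), and $(i-n-r)-(-n-r)=i\ge n-1\ge 1$; these are the only facts about $i$ and $r$ that will be used.

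Next I would prove by induction on $m$, for $0\le m\le n-1$, that after $m$ iterations the counter equals $m$ and the string has become
\[
\nu^{(m)}=\Bigl(\,i-n-r,\ \underbrace{-1,\dots,-1}_{m},\ m-n-r,\ \underbrace{0,\dots,0}_{n-1-m}\,\Bigr).
\]
The base case $m=0$ is the string above. For the inductive step (with $m\le n-2$), the first index $l$ with $\nu^{(m)}_l<\nu^{(m)}_{l+1}$ is $l=m+1$: the leading pair $(i-n-r,\,-1)$ is non-increasing because $i-n-r\ge -1$, the block of $-1$'s is constant, and $-1\ge m-n-r$ because $m\le n-2$ forces $m-n-r\le -2-r$; while $(m-n-r,\,0)$ is increasing since $m-n-r<0$. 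The gap there is $0-(m-n-r)=n+r-m\ge 2$, so the vanishing branch $\nu_{l+1}-\nu_l=1$ never occurs; the mutation replaces the pair $(m-n-r,\,0)$ by $(-1,\,m+1-n-r)$, producing $\nu^{(m+1)}$ and raising the counter to $m+1$. Moreover $\nu^{(m)}$ is not a partition for $m<n-1$ (the negative entry $m-n-r$ is followed by a $0$), so the algorithm does not stop early.

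Finally, after $n-1$ iterations the counter is $n-1$ and $\nu^{(n-1)}=(\,i-n-r,\ \underbrace{-1,\dots,-1}_{n-1},\ -r-1\,)$, which is weakly decreasing ($i-n-r\ge -1$, then the block of $-1$'s, then $-1\ge -r-1$ since $r\ge 0$); so the algorithm halts in step (1), giving $H^{n-1}(S^i\QQ(-n-r))=\Sc_{(i-n-r,\,-1,\dots,-1,\,-r-1)}\K^{n+1}\ne 0$ and $H^j=0$ for $j\ne n-1$. The computation is mostly bookkeeping; the one point requiring care is verifying at each step which entry is the first ascent and that its gap is never exactly $1$, and that the terminal string is genuinely weakly decreasing. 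It is precisely the hypotheses that make this work: $i\ge n-1$ ensures $\nu$ starts in the displayed shape (i.e. $i-n-r\ge -n-r$), and $r\le i-n+1$, i.e. $i-n-r\ge -1$, is what keeps the leading pair non-increasing throughout; were instead $i-n-r\le -2$, the first mutation would create an ascent at the front with gap $1$ and the algorithm would report vanishing rather than a nonzero class.
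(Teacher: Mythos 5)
Your proof is correct and follows essentially the same route as the paper, which obtains this proposition by applying Bott's algorithm (Proposition \ref{bott}) and defers the bookkeeping to Proposition 2.2.14 of \cite{Toc}; your induction on the mutation step, the verification that each gap is $n+r-m\ge 2$, and the check that $r\le i-n+1$ makes the terminal string weakly decreasing supply exactly those details. The only caveat is a harmless normalization issue in the final identification of the representation (compare $\Sc_{(1,\ldots,1)}\K^{n+1}$ in Lemma \ref{cohomologia para la demostracion}), which does not affect the non-vanishing claim.
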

\begin{proof}
See Proposition 2.2.14 of \cite{Toc}.
\end{proof}

These two propositions mean that the only symmetric powers without intermediate cohomology of $\QQ$ over $\G(1,n)$ are the following:
$$\QQ,\,S^2\QQ,\, S^3\QQ,\,\ldots,\, S^{n-3}\QQ,\,S^{n-2}\QQ.$$
Now, one can be interested also in the Schur functors without intermediate cohomology of $\QQ$ in the case of $\G(k,n)$. Let us give as an example the case of $\G(2,5)$.
\begin{example}{\rm
It can be proved using Bott's algorithm that the only Schur functors of $\QQ$ over $\G(2,5)$ without intermediate cohomology are the following:
$$\QQ,\, S^2\QQ,\, \Sc_{(2,1)}\QQ,\,\Sc_{(3,1)}\QQ,\,\Sc_{(4,2)}\QQ$$   }
\end{example}

The following three statements will be used in Proposition \ref{prop}.
\begin{prop}\label{es simple}
Consider $\QQ$ the quotient bundle of rank $2$ over the Grassmannian of lines $\G(1,n)$. Then $S^i\QQ$ is simple (i.e. $Hom(S^i\QQ,S^i\QQ)=\K$).
\end{prop}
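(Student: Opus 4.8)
The plan is to compute $\mathrm{Hom}(S^i\QQ, S^i\QQ)$ cohomologically via Bott's algorithm. Since $\QQ$ has rank $2$, we have the natural identification $S^i\QQ\dual \simeq (S^i\QQ)(-i)$ from Remark \ref{isomorfismosentrefibradosuniversales}, so $\mathrm{Hom}(S^i\QQ, S^i\QQ) = H^0(S^i\QQ\dual\otimes S^i\QQ) = H^0\big((S^i\QQ\otimes S^i\QQ)(-i)\big)$. The first step is therefore to decompose the tensor product $S^i\QQ\otimes S^i\QQ$ into irreducible Schur functors of the rank-$2$ bundle $\QQ$. By the Clebsch--Gordan rule (Pieri in rank $2$), one gets $S^i\QQ\otimes S^i\QQ \simeq \bigoplus_{t=0}^{i} \Sc_{(2i-t,\,t)}\QQ$, and using $\Sc_{(a,b)}\QQ \simeq (S^{a-b}\QQ)(b)$ this becomes $\bigoplus_{t=0}^{i}(S^{2i-2t}\QQ)(t)$.

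The second step is to twist by $\OO(-i)$ and evaluate $H^0$ on each summand: we need $H^0\big((S^{2i-2t}\QQ)(t-i)\big)$ for $t=0,1,\ldots,i$. For $t<i$ the twist $t-i$ is strictly negative, so by Proposition \ref{producto tensorial de productos simetricos}(ii) (applied with $j=0$, valid since $2i-2t$ is certainly $\le n-2$ once $i\le n-2$, which is the standing hypothesis on symmetric powers in this paper — or one checks the relevant instance directly) these cohomology groups vanish. For $t=i$ we get $H^0(S^0\QQ) = H^0(\OO) = \K$, one-dimensional. Summing over all summands yields $\mathrm{Hom}(S^i\QQ, S^i\QQ)=\K$, which is the claim.

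The main obstacle, and the only genuinely delicate point, is justifying the vanishing $H^0\big((S^{2i-2t}\QQ)(t-i)\big)=0$ for $0\le t<i$: this requires that the negative twist genuinely kills global sections of these symmetric powers, which is exactly the content of Proposition \ref{producto tensorial de productos simetricos}(ii), but one must make sure the symmetric power $S^{2i-2t}\QQ$ appearing there is within the range where that proposition applies (equivalently, run Bott's algorithm directly on $\nu=(2i-t, t-?)$ — concretely on the weight obtained from $(S^{2i-2t}\QQ)(t-i)$ — and check that $\nu$ fails to be a partition in a way that produces no cohomology in degree $0$). A secondary, purely bookkeeping obstacle is getting the Clebsch--Gordan decomposition and the twist conventions exactly right, since an off-by-one in the twist would spoil the count; this is routine but must be done carefully. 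Everything else is immediate.
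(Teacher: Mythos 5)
Your proposal is essentially correct, but it takes a different route from the paper: the paper gives no computation at all, referring to Proposition 2.2.16 of \cite{Toc} and noting that simplicity also follows from the stability of $\QQ$ (an irreducible homogeneous bundle such as $S^i\QQ$ is stable, hence simple). Your argument is instead a self-contained cohomological computation: identify $Hom(S^i\QQ,S^i\QQ)=H^0\bigl((S^i\QQ\otimes S^i\QQ)(-i)\bigr)$ using $S^i\QQ\dual\simeq S^i\QQ(-i)$, decompose by Clebsch--Gordan into $\bigoplus_{t=0}^{i}S^{2(i-t)}\QQ(t-i)$, and observe that only the summand $t=i$ contributes to $H^0$. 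This buys an explicit proof inside the paper's own toolkit (Bott's algorithm), and it works for every $i$, which matches the statement, where no bound $i\le n-2$ is assumed; the stability argument is shorter but imports outside machinery.

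One inaccuracy should be fixed: your claim that $2i-2t\le n-2$ whenever $i\le n-2$ is false (take $t=0$, $i=n-2$, $n\ge 3$), so Proposition \ref{producto tensorial de productos simetricos}(ii) cannot be invoked for all summands. The remedy is precisely the fallback you mention: for $m=i-t\ge 1$ the Bott weight of $S^{2m}\QQ(-m)$ is $\nu=(2m,0,m,\ldots,m)$, which fails to be a partition already at the start (the entry $0$ is followed by $m\ge 1$), so either all cohomology vanishes (the case $m=1$) or it is concentrated in strictly positive degrees; in either case $H^0\bigl(S^{2m}\QQ(-m)\bigr)=0$, with no restriction on $i$. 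With that substitution your proof is complete.
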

\begin{proof}
See Proposition 2.2.16 of \cite{Toc}. It is also a consequence of the stability of $\QQ$.
\end{proof}

\begin{lem}\label{cohom para lema 1}
Consider $\QQ$ the quotient bundle of rank $2$ over the Grassmannian of lines $\G(1,n)$ and $k\in\{1,\ldots,n-2\}$. Then,
\begin{description}
	\item[(i)] $H^{2n-2}(S^{n-k-1}\QQ\dual(-1)\otimes S^{n-k-2}\QQ(-n+1))=0$
	\item[(ii)] $H^{n-1}(S^{n-k-1}\QQ\dual(-1)\otimes S^{k-1}\QQ(-k))=0$
\end{description}
\end{lem}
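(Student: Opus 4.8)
The plan is to rewrite both bundles as direct sums of symmetric powers of $\QQ$ twisted by line bundles, and then read off the relevant cohomology from Proposition \ref{producto tensorial de productos simetricos} and from Serre duality. First I would use the isomorphism $S^j\QQ\dual\simeq(S^j\QQ)(-j)$ of Remark \ref{isomorfismosentrefibradosuniversales} to get, in case (ii), $S^{n-k-1}\QQ\dual(-1)\otimes S^{k-1}\QQ(-k)\simeq\big(S^{n-k-1}\QQ\otimes S^{k-1}\QQ\big)(-n)$, and in case (i), $S^{n-k-1}\QQ\dual(-1)\otimes S^{n-k-2}\QQ(-n+1)\simeq\big(S^{n-k-1}\QQ\otimes S^{n-k-2}\QQ\big)(-2n+k+1)$. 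Since $\QQ$ has rank two and $\det\QQ\simeq\OO(1)$ (again by Remark \ref{isomorfismosentrefibradosuniversales}), the Clebsch--Gordan decomposition gives $S^a\QQ\otimes S^b\QQ\simeq\bigoplus_{i=0}^{\min(a,b)}S^{a+b-2i}\QQ(i)$. Hence in case (ii) the bundle is a direct sum of terms $S^{n-2-2i}\QQ(i-n)$ with $0\le i\le\min(n-k-1,k-1)$, and in case (i) a direct sum of terms $S^{2n-2k-3-2i}\QQ(i-2n+k+1)$ with $0\le i\le n-k-2$.

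For (ii) this finishes the job almost immediately: since $(n-k-1)+(k-1)=n-2$ we have $\min(n-k-1,k-1)\le(n-2)/2$, so each exponent $n-2-2i$ lies between $0$ and $n-2$; the twist $i-n$ is $\le -3$, hence negative; and $n-1<2n-2$. Therefore Proposition \ref{producto tensorial de productos simetricos}(ii) applies to every summand and gives $H^{n-1}=0$, so $H^{n-1}$ of the whole bundle vanishes.

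Case (i) is the delicate point, since the exponent $2n-2k-3-2i$ can be larger than $n-2$ (for small $k$), so Proposition \ref{producto tensorial de productos simetricos}(ii) does not apply directly to $H^{2n-2}$. Here I would instead use Serre duality on $\G(1,n)$, which has dimension $2n-2$ and canonical bundle $\OO(-n-1)$; combined with $(S^a\QQ)\dual\simeq S^a\QQ(-a)$ this gives $H^{2n-2}\big(S^a\QQ(m)\big)\simeq H^0\big(S^a\QQ(-a-m-n-1)\big)\dual$. A direct computation shows that for the summand of index $i$ in (i) the new twist is $-a-m-n-1=i+k+1-n$, which is $\le -1$ because $i\le n-k-2$. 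Finally, Bott's algorithm (Proposition \ref{bott}) shows that $H^0(S^a\QQ(m))=0$ for every $m<0$: the weight $(a+m,m,0,\dots,0)$ is not a partition, and the algorithm then either stops with total vanishing or increases the cohomological degree, so it never returns a nonzero $H^0$. Hence every summand of the bundle in (i) has vanishing $H^{2n-2}$. The main obstacle is exactly this last step --- recognizing that one must dualize in case (i) and that the resulting twist always falls in the no-sections range; everything else is the routine bookkeeping of the exponents and twists coming from the Clebsch--Gordan formula.
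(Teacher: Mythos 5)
Your argument is correct. A minor caveat first: the paper itself gives no in-text proof, deferring to Lemma 2.2.17 of \cite{Toc}, where the vanishings are obtained by running Bott's algorithm on the relevant weights; so strictly speaking you cannot be matched line-by-line against the paper, but your route is sound and uses only results stated in the paper. The bookkeeping checks out: in (ii) the bundle is $\bigl(S^{n-k-1}\QQ\otimes S^{k-1}\QQ\bigr)(-n)=\bigoplus_i S^{n-2-2i}\QQ(i-n)$ with $n-2-2i\le n-2$ and negative twist, so Proposition \ref{producto tensorial de productos simetricos}(ii) applies to each summand (note that for $j=n-1$ it is exactly the range $j<2n-2$ that makes this work); in (i) you correctly observe that Proposition \ref{producto tensorial de productos simetricos}(ii) can never handle $j=2n-2$ (and the exponents may exceed $n-2$), and your Serre-duality computation $-a-m-n-1=i+k+1-n\le -1$ for $a=2n-2k-3-2i$, $m=i-2n+k+1$, $0\le i\le n-k-2$ is right, as is the observation that Bott's algorithm (Proposition \ref{bott}) only ever produces cohomology in degree $\ge 1$ once the initial weight $(a+m,m,0,\dots,0)$ with $m<0$ fails to be dominant, so $H^0=0$. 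Compared with the thesis proof, which computes the cohomology of the tensor products directly by Bott, your organization via the rank-two Clebsch--Gordan decomposition plus the already-stated Proposition \ref{producto tensorial de productos simetricos} and Serre duality is essentially the same machinery packaged differently, with the advantage of being verifiable entirely from statements available in this paper.
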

\begin{proof}
See Lemma 2.2.17 of \cite{Toc}.
\end{proof}

\begin{lem}\label{cohom para lema 3}
Consider $\QQ$ the quotient bundle of rank $2$ over the Grassmannian of lines $\G(1,n)$ and $k\in\{1,\ldots,n-2\}$. Then,
\begin{description}
	\item[(i)] $H^{n-1}(S^k\QQ\dual\otimes S^{n-k-2}\QQ\dual(-1))=0$
	\item[(ii)] $H^0(S^k\QQ\dual\otimes S^{k-1}\QQ)=0$
\end{description}
\end{lem}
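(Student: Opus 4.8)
The natural approach is to reduce both statements to Bott's algorithm (Proposition \ref{bott}) applied to the Grassmannian $\G(1,n)$, exactly as in the earlier cohomological lemmas quoted from \cite{Toc}. The first step is to rewrite everything in terms of Schur functors of $\QQ$ and $\s\dual$ so that Bott's algorithm applies directly. Over $\G(1,n)$ the bundle $\QQ$ has rank $2$, and by Remark \ref{isomorfismosentrefibradosuniversales} we have $S^j\QQ\dual\simeq (S^j\QQ)(-j)$; moreover $S^j\QQ=\Sc_{(j,0)}\QQ$ and a twist by $\OO(i)$ shifts a partition $(a,b)$ of $\QQ$ to $(a+i,b+i)$. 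The subtle point is that a tensor product $S^a\QQ\otimes S^b\QQ$ is \emph{not} a single Schur functor; one must first apply the Clebsch--Gordan / Pieri decomposition $S^a\QQ\otimes S^b\QQ\simeq\bigoplus_{t=0}^{\min(a,b)}\Sc_{(a+b-t,\,t)}\QQ$ (valid since $\operatorname{rk}\QQ=2$) and handle each summand separately.

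For part (i), after using $S^k\QQ\dual\otimes S^{n-k-2}\QQ\dual(-1)\simeq\bigl(S^k\QQ\otimes S^{n-k-2}\QQ\bigr)(-k-(n-k-2)-1)=\bigl(S^k\QQ\otimes S^{n-k-2}\QQ\bigr)(-n+1)$, I would decompose into summands $\Sc_{(n-2-t,\,t)}\QQ(-n+1)=\Sc_{(-1-t,\,-n+1+t)}\QQ$ for $0\le t\le\min(k,n-k-2)$, i.e. a sequence $\nu=(-1-t,\,-n+1+t,\,0,\dots,0)$ padded with $n-1$ zeros for the $\s\dual$-part. One then runs Bott's algorithm on each such $\nu$ and checks that in every case the bundle either has no cohomology at all, or its nonzero cohomology sits in a degree $\ne n-1$. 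Here the relevant range of twists is moderate (the bundle is close to $\QQ\dual\otimes\QQ\dual(-1)$-type), so one expects $H^{n-1}$ to vanish because $S^i\QQ(-l)$ only acquires $H^{n-1}$ for sufficiently negative twists by Proposition \ref{productos simetricos con cohomologia}, and here the twist $-n+1$ combined with $i\le n-2$ keeps us below that threshold — essentially a consequence of Proposition \ref{producto tensorial de productos simetricos}(ii) applied summand by summand after noting each $\Sc_{(a,b)}\QQ = (S^{a-b}\QQ)(b)$.

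For part (ii), the product $S^k\QQ\dual\otimes S^{k-1}\QQ\simeq (S^k\QQ)(-k)\otimes S^{k-1}\QQ\simeq\bigoplus_{t=0}^{k-1}\Sc_{(2k-1-t,\,t)}\QQ(-k)=\bigoplus_{t=0}^{k-1}\Sc_{(k-1-t,\,t-k)}\QQ$, so each summand is $\bigl(S^{2k-1-2t}\QQ\bigr)(t-k)$ with $t-k<0$; applying Proposition \ref{producto tensorial de productos simetricos}(ii) (the twist is strictly negative and $2k-1-2t\le 2k-1\le 2n-5\le\ldots$, in any case $\le n-2$ once $k\le n-2$ forces the exponent into range, with the boundary cases checked by hand) gives $H^0=0$ for every summand, hence $H^0$ of the sum vanishes. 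I would double-check the extreme summands $t=0$ and $t=k-1$ explicitly, since those are where the symmetric-power exponent or the twist is most extreme and where an off-by-one in the hypothesis $k\in\{1,\dots,n-2\}$ would first bite.

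The main obstacle I anticipate is purely bookkeeping: making sure the Pieri decomposition is applied with the correct twist normalization (the $-j$ shift from $S^j\QQ\dual\simeq(S^j\QQ)(-j)$ is easy to misplace) and that the resulting one-parameter family of sequences $\nu$ is handled uniformly in $k$ rather than case-by-case — in particular verifying that no summand lands in the degenerate "$\nu_{l+1}-\nu_l=1$" branch of Bott's algorithm in a way that would be fine, versus landing as a genuine partition in the wrong cohomological degree. Since the paper attributes the detailed verification to \cite{Toc} (Lemma 2.2.18), it is legitimate here to present the reduction to Bott's algorithm and the summand-wise application of Proposition \ref{producto tensorial de productos simetricos}, and cite \cite{Toc} for the remaining routine computation.
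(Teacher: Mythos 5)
Your overall strategy -- rewrite everything as negatively twisted symmetric powers of $\QQ$ via $S^j\QQ\dual\simeq(S^j\QQ)(-j)$, decompose with the rank-two Clebsch--Gordan formula $S^a\QQ\otimes S^b\QQ\simeq\bigoplus_{t}\Sc_{(a+b-t,t)}\QQ$, and kill each summand by Bott's algorithm -- is exactly the route the paper intends (its proof is just a citation of Lemma 2.2.18 of \cite{Toc}, and the surrounding text makes clear these lemmas are Bott computations). Part (i) is handled correctly: each summand is $S^{n-2-2t}\QQ(t-n+1)$ with $n-2-2t\le n-2$ and strictly negative twist, so Proposition \ref{producto tensorial de productos simetricos}(ii) applies and kills $H^{n-1}$.

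In part (ii), however, your justification contains a genuine inaccuracy: the summands are $S^{2k-1-2t}\QQ(t-k)$, and the exponent $2k-1-2t$ is \emph{not} always $\le n-2$ (take $t=0$ and $k=n-2$ with $n\ge 4$: the exponent is $2n-5>n-2$; more generally this happens whenever $k>\frac{n-1}{2}$), so Proposition \ref{producto tensorial de productos simetricos}(ii) cannot be invoked ``in any case.'' The conclusion is nevertheless correct and the gap closes in one line: for every $m\ge 0$ and $l>0$ one has $H^0(S^m\QQ(-l))=0$, because $S^m\QQ(-l)=\Sc_{(m-l,-l)}\QQ$ corresponds in Bott's algorithm to the weight $(m-l,-l,0,\ldots,0)$, which is not a partition (its second entry is negative), and the algorithm only ever increases the cohomological degree, so any nonvanishing cohomology sits in degree $\ge 1$. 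Replacing your appeal to Proposition \ref{producto tensorial de productos simetricos}(ii) by this observation (which needs no bound on the exponent) makes part (ii) complete for all summands, including the extreme ones $t=0$ and $t=k-1$ that you flagged.
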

\begin{proof}
See Lemma 2.2.18 of \cite{Toc}.
\end{proof}

This last lemma will be used in Theorem \ref{main theorem}:
\begin{lem}\label{cohom para main11}\label{cohom para main1111}\label{cohom para main111}\label{cohom para main1}\label{cohomologia para la demostracion}
Consider $\QQ$ the quotient bundle of rank $2$ over the Grassmannian of lines $\G(1,n)$ and $k\in\{1,\ldots,n-2\}$. Then,
\begin{description}
	\item[(i)] $H^{n-1}(S^k\QQ\otimes S^{n-k-1}\QQ(-n))=H^{n-1}(S^{n-1}\QQ(-n))=\Sc_{(1,\ldots,1)}\K^{n+1}=\bigwedge^{n+1}\K^{n+1}$ and its dimension is equal to $1$
	\item[(ii)] $H^{j}_*(S^k\QQ\otimes S^{n-k-1}\QQ)=0\quad\text{for}\quad j\neq n-1$
	\item[(iii)] $H^{j}_*(S^k\QQ\otimes S^{i}\QQ)=0\quad\text{for}\quad i\neq n-k-1 \quad\text{for}\quad j=1,2,\ldots,2n-3$
\end{description}
\end{lem}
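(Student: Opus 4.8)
The plan is to reduce everything to Bott's algorithm (Proposition \ref{bott}) applied to suitable decompositions of tensor products of symmetric powers of $\QQ$ over $\G(1,n)$. The key structural fact is the Pieri-type decomposition in rank $2$: since $\QQ$ has rank $2$, every Schur functor of $\QQ$ is of the form $\Sc_{(a+b,a)}\QQ \simeq S^{a+b}\QQ \otimes (S^a\QQ)(-?)$, and more usefully $S^k\QQ \otimes S^m\QQ \simeq \bigoplus_{t=0}^{\min(k,m)} \Sc_{(k+m-t,t)}\QQ \simeq \bigoplus_{t=0}^{\min(k,m)} S^{k+m-2t}\QQ(-t)$ (the $\mathrm{SL}_2$ Clebsch--Gordan rule, twisted appropriately using $S^j\QQ\dual\simeq (S^j\QQ)(-j)$ from Remark \ref{isomorfismosentrefibradosuniversales}). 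So the whole lemma becomes a finite collection of statements about $H^j(S^d\QQ(l))$ for explicit $d$ and $l$, and these are governed by Propositions \ref{producto tensorial de productos simetricos} and \ref{productos simetricos con cohomologia}.

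For part (i): expanding $S^k\QQ \otimes S^{n-k-1}\QQ = \bigoplus_{t=0}^{\min(k,n-k-1)} S^{n-1-2t}\QQ(-t)$, so $S^k\QQ \otimes S^{n-k-1}\QQ(-n) = \bigoplus_t S^{n-1-2t}\QQ(-n-t)$. I would show that for $t\geq 1$ the summand $S^{n-1-2t}\QQ(-n-t)$ has vanishing $H^{n-1}$ — since its degree of symmetric power is $n-1-2t \leq n-3 < n-1$, Proposition \ref{productos simetricos con cohomologia} does not produce nonvanishing cohomology there, and one checks via Bott that the associated weight never rearranges to a partition of the right length; only the $t=0$ summand $S^{n-1}\QQ(-n)$ survives. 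For that summand, running Bott's algorithm on $\nu = (-1, -n, \underbrace{0,\ldots,0}_{n-1})$ (the weight of $S^{n-1}\QQ$ twisted by $-n$, written with the $\mathbb{S}\dual$-slots) one gets exactly one nonzero step landing on a partition equal to $(1,\ldots,1)$ after $n-1$ exchanges, giving $\bigwedge^{n+1}\K^{n+1}$ in degree $n-1$, which is one-dimensional. This matches the claimed identification.

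For parts (ii) and (iii): again decompose $S^k\QQ\otimes S^i\QQ \otimes \OO(l) = \bigoplus_{t} S^{k+i-2t}\QQ(l-t)$. For each summand $S^d\QQ(m)$ I invoke Proposition \ref{producto tensorial de productos simetricos}: if $m\geq 0$ then $H^j=0$ for $j>0$; if $m<0$ and $d\leq n-2$ then $H^j=0$ for $j<2n-2$, so in particular $H^j = 0$ for $1\leq j\leq 2n-3$; and $H^0$ of a negative twist of $S^d\QQ$ vanishes as well. The point is that as long as \emph{every} summand $S^{k+i-2t}\QQ$ has degree $\le n-2$, the whole tensor product is aCM (no intermediate cohomology in degrees $1,\dots,2n-3$), which is (iii) for $i\ne n-k-1$; the extra top-degree cohomology can only appear when some summand has degree $n-1$ or more and carries a negative twist, i.e.\ when $d=k+i-2t$ can reach $n-1$, which forces $i = n-k-1$ (taking $t=0$) — and in that single exceptional case part (i)/(ii) pin down that the only intermediate-range ($1,\dots,2n-3$) obstruction is absent and everything is concentrated in degree $n-1$, which is (ii). I would organize (ii) as: the summands with $t\geq 1$ have degree $\leq n-3\leq n-2$ hence are aCM by Proposition \ref{producto tensorial de productos simetricos}, and the $t=0$ summand $S^{n-1}\QQ(l)$ has all its cohomology in degree $0$ (for $l\ge 0$) or degree $n-1$ and $2n-2$ only (for $l<0$, by Bott), never in $1,\dots,n-2$ or $n,\dots,2n-3$; combining gives the vanishing of $H^j_*$ for $j\ne n-1$.

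The main obstacle will be the careful bookkeeping in the two borderline summands: the $t=0$ summand when $i=n-k-1$, whose symmetric-power degree $n-1$ lies exactly on the boundary where Proposition \ref{producto tensorial de productos simetricos}(ii) stops applying, so one must run Bott's algorithm by hand to confirm that for $S^{n-1}\QQ(l)$ the cohomology sits \emph{only} in degrees $0$, $n-1$, and $2n-2$ (never in $1,\dots,n-2$ nor in $n,\dots,2n-3$), with the degree-$(n-1)$ piece being the one-dimensional $\bigwedge^{n+1}\K^{n+1}$ precisely at $l=-n$. The rest is a uniform application of the Clebsch--Gordan decomposition together with Propositions \ref{producto tensorial de productos simetricos} and \ref{productos simetricos con cohomologia}, and should be routine once the decomposition and the degree-counting inequality $k+i-2t \le n-2 \Leftrightarrow (i \ne n-k-1 \text{ or } t\ge 1)$ are written down precisely. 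One should also double-check the edge values $k=1$ and $k=n-2$ separately, where some summands of the Clebsch--Gordan decomposition degenerate.
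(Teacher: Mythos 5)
Your overall route --- decompose $S^k\QQ\otimes S^i\QQ$ by the rank-two Clebsch--Gordan (Pieri) rule and feed each summand to Bott's algorithm via Propositions \ref{producto tensorial de productos simetricos} and \ref{productos simetricos con cohomologia} --- is the natural one and is in the spirit of the proof the paper delegates to the thesis (the paper itself only cites Lemma 2.2.19 of \cite{Toc}). Parts (i) and (ii) essentially work, up to two small points. First, a sign slip: since $\bigwedge^2\QQ\cong\OO(1)$, one has $\Sc_{(k+m-t,t)}\QQ\cong S^{k+m-2t}\QQ(t)$, so the decomposition is $S^k\QQ\otimes S^m\QQ\cong\bigoplus_{t}S^{k+m-2t}\QQ(t)$ with twist $+t$, not $-t$; this is immaterial for the $H^j_*$ statements, and for (i) one only has to note that $-n+t<0$ because $t\le\min(k,n-k-1)<n$, so Proposition \ref{producto tensorial de productos simetricos}(ii) still kills the $t\ge 1$ summands and only $S^{n-1}\QQ(-n)$ survives, whose Bott run gives the one-dimensional determinant representation in degree $n-1$, as you say. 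Second, (ii) must of course be read as a statement about intermediate cohomology ($1\le j\le 2n-3$), since $H^0_*$ and $H^{2n-2}_*$ are visibly nonzero; your argument implicitly does this.

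The genuine gap is in (iii). The pivotal equivalence you state, ``$k+i-2t\le n-2\Leftrightarrow(i\ne n-k-1\text{ or }t\ge1)$'', is false as soon as $i\ge n-k$ (which is possible with $i\le n-2$ once $k\ge2$): then the $t=0$ summand $S^{k+i}\QQ$ has degree at least $n$, and Proposition \ref{productos simetricos con cohomologia} gives $H^{n-1}(S^{k+i}\QQ(-n))\ne0$, so the vanishing you are trying to prove actually fails in that range. Concretely, on $\G(1,4)$ with $k=i=2$ one has $S^2\QQ\otimes S^2\QQ\cong S^4\QQ\oplus S^2\QQ(1)\oplus\OO(2)$ and $H^3(S^4\QQ(-4))\ne0$, although $i=2\ne n-k-1=1$ and $j=3\in\{1,\ldots,2n-3\}$. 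So your argument (and the blanket statement (iii) as literally written) only holds when $k+i\le n-2$, i.e.\ $i<n-k-1$. The clean repair is to split by $j$: for $j\ne n-1$ with $1\le j\le 2n-3$ the vanishing holds for every $i$, because each Clebsch--Gordan summand is a twist $S^d\QQ(m)$ whose cohomology Bott concentrates in a single degree belonging to $\{0,\,n-1,\,2n-2\}$; for $j=n-1$ you must require $k+i\le n-2$ so that every summand has degree at most $n-2$ and Proposition \ref{producto tensorial de productos simetricos} applies. This restricted version is exactly what the paper uses (the only hypothesis pair with $j=n-1$ in Theorem \ref{main theorem} is $(n-k-2,n-1)$), but you should state the restriction explicitly rather than rely on the incorrect degree-count equivalence.
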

\begin{proof}
See Lemma 2.2.19 of \cite{Toc}.
\end{proof}

\section{Splitting criterion}
In this section we recover the splitting criterion  given by E. Arrondo and F. Malaspina for the Grassmannian of lines in \cite{AM} which is a generalization of Theorem 2.1 of \cite{OTTcat} (given by G. Ottaviani) for the particular case of Grassmannian of lines. In \cite{OTTcat} the main tool is derived categories, but we use the Eagon-Northcott complexes and the Serre duality.
\begin{thm}\label{split theorem}
Suppose that for any coherent sheaf $\F$ of $\G(1,n)$ the following conditions are satisfied:
\begin{description}
	\item[i.] $H^0(\F(-1))=H^1(\F\otimes \QQ(-2))=H^2(\F\otimes S^2\QQ(-3))=\ldots=H^{n-2}(\F\otimes S^{n-2}\QQ(-n+1))=0$ 
	\item[ii.] $H^{n-1}(\F\otimes S^{n-2}\QQ(-n))=H^n(\F\otimes S^{n-3}\QQ(-n))=\ldots=H^{2n-5}(\F\otimes S^2\QQ(-n))=H^{2n-4}(\F\otimes \QQ(-n))=H^{2n-3}(\F(-n))=0$
\end{description}

Then $\F=(H^0(\F)\otimes \OO)\oplus \F'$.
\end{thm}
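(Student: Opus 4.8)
The plan is to exhibit a canonical sub-bundle of $\F$ isomorphic to $H^0(\F)\otimes\OO$ and to show that the canonical evaluation map $\mathrm{ev}\colon H^0(\F)\otimes\OO\to\F$ is split injective, i.e. admits a retraction $\F\to H^0(\F)\otimes\OO$. Since $H^0(\F)$ is a finite-dimensional vector space, producing such a retraction amounts to producing a surjection $\F\to H^0(\F)^{**}\otimes\OO=H^0(\F)\otimes\OO$ whose composition with $\mathrm{ev}$ is the identity on $H^0(\F)\otimes\OO$. Equivalently, by the pairing philosophy borrowed from \cite{AM}, one wants to show that the composition map
$$
\mathrm{Hom}(\OO,\F)\times\mathrm{Hom}(\F,\OO)\longrightarrow\mathrm{Hom}(\OO,\OO)=\K
$$
is a perfect pairing onto $H^0(\F)$, after identifying $\mathrm{Hom}(\F,\OO)$ with an appropriate cohomology group via Serre duality. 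So the first step is to set up these identifications: $\mathrm{Hom}(\OO,\F)=H^0(\F)$ is immediate, and $\mathrm{Hom}(\F,\OO)=H^0(\F\dual)$, while Serre duality on the $(2n-2)$-dimensional $\G(1,n)$ gives $H^0(\F\dual)\cong H^{2n-2}(\F\otimes\omega_{\G})^*$, with $\omega_\G=\OO(-n-1)$.

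The second, and technical, step is to resolve $\OO$ (or rather a suitable twist) by a complex built from the $S^i\QQ$'s so that the vanishing hypotheses in (i) and (ii) can be fed in. The natural candidate is the dual Eagon--Northcott complex $(R_j)$ of the excerpt, or more precisely the Koszul-type resolution of $\OO(-n)$ obtained by splicing the complexes $(R_j)$: since $\bigwedge^{n+1}V=\K$, the Eagon--Northcott complexes assemble into a long exact sequence resolving $\OO(-n)$ (up to the twist coming from $\bigwedge^{n+1}\s\dual$ and Remark \ref{isomorfismosentrefibradosuniversales}) with successive terms involving $\OO(-1),\QQ(-2),S^2\QQ(-3),\dots$ on one side and $S^{n-2}\QQ(-n),\dots$ on the other. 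Tensoring this long exact sequence with $\F$ and chasing the hypercohomology spectral sequence, hypotheses (i) and (ii) are exactly the vanishings needed to kill every off-diagonal contribution, forcing $H^0(\F(-1))=0$ to propagate and, dually, $H^{2n-3}(\F(-n))=0$; the surviving term identifies $H^0(\F)$ with its Serre dual inside $\mathrm{Hom}(\F,\OO)$ in a way compatible with composition. In other words, the vanishing in (i) controls $H^i$ of the "positive half" of the resolution and the vanishing in (ii) controls $H^{n-1+i}$ of the "negative half", and together they collapse the spectral sequence to the single entry governing the pairing.

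The third step is the bookkeeping that turns the perfect pairing into a genuine splitting: one picks a basis $s_1,\dots,s_r$ of $H^0(\F)$ and uses perfectness to find $t_1,\dots,t_r\in\mathrm{Hom}(\F,\OO)$ with $t_i\circ s_j=\delta_{ij}$; then $\bigoplus t_i\colon\F\to\OO^{\oplus r}$ is a retraction of $\mathrm{ev}=\bigoplus s_j$, and $\F\cong(H^0(\F)\otimes\OO)\oplus\F'$ with $\F'=\ker(\bigoplus t_i)$. I expect the main obstacle to be the precise spectral sequence argument in the second step: one must verify that the specific twists appearing in (i) and (ii) are exactly those produced by the spliced Eagon--Northcott complexes, keeping careful track of the shift $\QQ\dual\simeq\QQ(-1)$ and the identification $\bigwedge^n\s\dual\simeq\s(-1)$, and one must check that the edge map of the spectral sequence coincides (up to nonzero scalar) with the composition pairing rather than merely being an abstract isomorphism of the same-dimensional spaces — this compatibility is what makes the retraction exist, and it is the delicate point already flagged in \cite{AM}.
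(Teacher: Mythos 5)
Your proposal is correct in outline and takes essentially the same route as the paper: the same spliced Eagon--Northcott complex viewed as a generator of $Ext^{2n-2}(\OO,\OO(-n-1))$, with hypotheses (i) and (ii) providing exactly the connecting-map vanishings, and the compatibility of this edge map with Serre duality used to transfer the perfect Serre pairing to the composition pairing $\mathrm{Hom}(\OO,\F)\times\mathrm{Hom}(\F,\OO)\to\K$ (the paper packages this as an injective $\psi_1$ on $\F$, a surjective $\psi_2$ on $\F\dual$, a commutative diagram, and induction on $h^0(\F)$, rather than your single hypercohomology chase and simultaneous retraction). One small correction: the composition pairing is not perfect in general (e.g.\ $h^0(\F)$ and $h^0(\F\dual)$ need not agree); your argument yields, and your dual-system construction of the $t_i$ only needs, nondegeneracy in the $H^0(\F)$ factor.
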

\begin{proof}
We prove the result by induction on the dimension of $H^0(\F)$, the case zero being trivial. We thus assume $H^0(\F)\neq 0$. We consider the Eagon-Northcott complex $(R_{n-1}\dual\otimes \OO(-1))$ glued together with $(R_{n-1})\otimes \OO(-n)$,
\begin{equation*}
0\longrightarrow \OO(-n-1)\longrightarrow \bigwedge^{n-1}V \otimes \OO(-n)\longrightarrow \bigwedge^{n-2}V\otimes\QQ(-n)\longrightarrow\ldots
\end{equation*}
\begin{equation*}
\ldots\longrightarrow \bigwedge^{2}V\otimes S^{n-3}\QQ(-n)\longrightarrow V\otimes S^{n-2}\QQ(-n)\longrightarrow V^*\otimes S^{n-2}\QQ(-n+1)\longrightarrow\ldots
\end{equation*}
\begin{equation}\label{complex}
\ldots \longrightarrow
	 	\bigwedge^{n-2}V^*\otimes\QQ(-2)\longrightarrow \bigwedge^{n-1}V^*\otimes\OO(-1)\longrightarrow \OO\longrightarrow 0
\end{equation}
which can be regarded as a generator of $H^{2n-2}(\omega_{\G(1,n)})=H^{2n-2}(\OO(-n-1))=Ext^{2n-2}(\OO,\OO(-n-1))$.
We can build the injective map $H^0(\F)\stackrel{\psi_1}{\longrightarrow} H^{n-1}(\F\otimes S^{n-1}\QQ(-n))$ by tensorizing the first part of the previous complex ($(R_{n-1}\dual\otimes \OO(-1))$) with $\F$ and using the vanishing in $(i.)$.
Using the second part of the previous complex ($(R_{n-1})\otimes \OO(-n)$) tensorized by $\F\dual$ and using the vanishing in $(ii.)$ one can build the surjective map $H^0(\F\dual)\stackrel{\psi_2}{\longrightarrow} H^{n-1}(\F\dual\otimes S^{n-1}\QQ\dual(-1))$. Hence, we can consider the following commutative diagram:

\resizebox{15.7cm}{!}{%
\begin{minipage}{\textwidth}
\centering
 $\begin{array}{rcl}
 		& &\\
	 H^{n-1}(\F\otimes S^{n-1}\QQ(-n))\times H^{n-1}(\F\dual\otimes S^{n-1}\QQ\dual(-1))  & \stackrel{\phi}{\longrightarrow} & H^{2n-2}(\OO(-n-1))\\
	 & &\\
	 \uparrow id\times\psi_2 & \circlearrowleft & \psi_4\uparrow\,\simeq \\
	 & & \\
	 H^{n-1}(\F\otimes S^{n-1}\QQ(-n))\times H^{0}(\F\dual) & \longrightarrow & H^{n-1}(S^{n-1}\QQ(-n)) \\
	 & &\\
	 \uparrow \psi_1\times id & \circlearrowleft & \psi_3\uparrow\,\simeq\\
	 & &\\
	 H^{0}(\F)\times H^{0}(\F\dual)  &\stackrel{\phi'}{\longrightarrow} & H^0(\OO) \\
	  & &
  \end{array}$
\end{minipage}}

One can easily see (with the help of Proposition \ref{producto tensorial de productos simetricos} and Proposition \ref{productos simetricos con cohomologia}) that the maps $\psi_3$ and $\psi_4$ are isomorhpisms.

Since the map $\phi$ is a perfect pairing, $\psi_1$ is injective, $\psi_2$ is surjective and the diagram commutates, a nonzero element $s\in H^0(\F)$ (which exists by hypothesis) has a partner $s^\vee\in H^0(\F^\vee)$ mapping to the identity in $H^0(\OO)$. This means that we can write $\F=\F'\oplus\OO$. Applying the induction hypothesis to $\F'$ we conclude the result.
\end{proof}

As a corollary for vector bundles we have the splitting criterion given by E. Arrondo and F. Malaspina for the Grassmannian of lines in \cite{AM}. Notice that we could have given a similar result as Theorem 2.2 of \cite{OTTcat} by adding the concept of g.skyscraper sheaf.
\begin{thm}\label{teorema de AM}
Let $F$ be a vector bundle on the Grassmannian of lines $\G(1,n)$. Then $F$ splits as a direct sum of line bundles if and only if:
\begin{description}
	\item[(a)] $H^1_*(F\otimes \QQ)=H^2_*(F\otimes S^2\QQ)=H^3_*(F\otimes S^3\QQ)=\ldots=H^{n-2}_*(F\otimes S^{n-2}\QQ)=0$ 
	\item[(b)] $H^{n-1}_*(F\otimes S^{n-2}\QQ)=H^n_*(F\otimes S^{n-3}\QQ)=\ldots=H^{2n-4}_*(F\otimes \QQ)=H^{2n-3}_*(F)=0$
\end{description}

\end{thm}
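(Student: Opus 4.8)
The plan is to derive this statement as a corollary of Theorem \ref{split theorem}, by observing that conditions (a) and (b) are precisely what is needed in order to apply that theorem to \emph{every} twist $F(t)$ --- up to the single cohomology group $H^0(F(t-1))$, which will be controlled by choosing $t$ carefully.

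\emph{The ``only if'' direction} is a direct computation. If $F\cong\bigoplus_i\OO(a_i)$, then $F\otimes S^k\QQ(m)\cong\bigoplus_i S^k\QQ(a_i+m)$, so it suffices to show that $H^k(S^k\QQ(m))=0$ for all $m\in\Z$ and all $k\in\{1,\dots,n-2\}$, and that $H^{n-1+s}(S^{n-2-s}\QQ(m))=0$ for all $m\in\Z$ and all $s\in\{0,\dots,n-2\}$. In every one of these cases the symmetric exponent is $\le n-2$ and the cohomological degree lies in $\{1,\dots,2n-3\}$. For $m\ge 0$ the vanishing is part (i) of Proposition \ref{producto tensorial de productos simetricos} (positive cohomological degree), and for $m<0$ it is part (ii) of the same proposition, the bound $n-1+s<2n-2$ being exactly the condition $s\le n-2$.

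\emph{The ``if'' direction} I would prove by induction on $\operatorname{rk}F$, the case $\operatorname{rk}F=0$ being trivial. Assume $F\neq 0$ satisfies (a) and (b). Since $\OO(1)$ is ample and $F$ is locally free, the set $\{s\in\Z:\ H^0(F(s))\neq 0\}$ is nonempty and bounded below; let $t$ be its minimum, so that $H^0(F(t))\neq 0$ while $H^0(F(t-1))=0$. Apply Theorem \ref{split theorem} to $\F:=F(t)$. Condition (i) of that theorem reads $H^j(F\otimes S^j\QQ(t-j-1))=0$ for $j=0,\dots,n-2$: the term $j=0$ is $H^0(F(t-1))=0$ by minimality of $t$, and the terms $1\le j\le n-2$ are instances of (a). Condition (ii) reads $H^{n-1+s}(F\otimes S^{n-2-s}\QQ(t-n))=0$ for $s=0,\dots,n-2$, and these are instances of (b). Hence Theorem \ref{split theorem} yields $F(t)=(H^0(F(t))\otimes\OO)\oplus\F'$ with $\F'$ locally free, i.e. $F\cong\OO(-t)^{\oplus h}\oplus G$, where $h=\dim H^0(F(t))\ge 1$ and $G:=\F'(-t)$ has rank $\operatorname{rk}F-h<\operatorname{rk}F$.

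To close the induction I would check that $G$ still satisfies (a) and (b); this is immediate, since $F\cong\OO(-t)^{\oplus h}\oplus G$ makes each group $H^j_*(G\otimes S^k\QQ)$ a direct summand of $H^j_*(F\otimes S^k\QQ)$, which vanishes for all the relevant pairs $(j,k)$ by (a) and (b). By the induction hypothesis $G$ splits as a direct sum of line bundles, hence so does $F$. The argument is essentially bookkeeping; the main point to get right is the index matching between the hypotheses of Theorem \ref{split theorem} for the twist $F(t)$ and conditions (a), (b), together with the observation that the one group they do not cover, namely $H^0(F(t-1))$, is killed by taking $t$ minimal with $H^0(F(t))\neq 0$. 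There is no deeper obstacle, since the hard analytic work is already packaged in Theorem \ref{split theorem} and Proposition \ref{producto tensorial de productos simetricos}.
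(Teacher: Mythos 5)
Your proposal is correct and follows essentially the same route as the paper, whose proof is exactly ``apply Theorem \ref{split theorem} with recursion on the rank of $F$''; your contribution is just to make explicit the bookkeeping the paper leaves implicit (normalizing the twist $t$ so that $H^0(F(t))\neq 0$ but $H^0(F(t-1))=0$ to supply the one hypothesis of Theorem \ref{split theorem} not covered by (a) and (b), and checking the ``only if'' direction via Proposition \ref{producto tensorial de productos simetricos}). The index matching and the induction step are all in order, so no gap.
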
 
\begin{proof}
Apply Theorem \ref{split theorem} using recursion on the rank of $F$.
\end{proof}

We can compare this characterization with the splitting criterion given by G. Ottaviani in \cite{OTT} for the particular case of the Grassmannian of lines $\G(1,n)$:
\begin{thm}\label{teorema ottaviani split} (G. Ottaviani, \cite{OTT})
A vector bundle $F$ over $\G(1,n)$ splits as a direct sum of line bundles if and only if the following conditions hold for $i=0,1,\ldots,n-2$:
$$H^j_*(F\otimes S^i\QQ)=0\quad\text{with}\quad i\leq j< 2n-2-i\quad\text{and}\quad j>0.$$
\end{thm}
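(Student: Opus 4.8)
The plan is to obtain both implications from tools already available in the excerpt: the ``only if'' direction from the cohomology vanishing collected in Proposition \ref{producto tensorial de productos simetricos}, and the more substantial ``if'' direction from the stronger splitting criterion of Theorem \ref{teorema de AM}.

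For the ``only if'' direction, suppose $F\cong\bigoplus_a\OO(l_a)$. Since forming $S^i\QQ$ and twisting commute with finite direct sums, $H^j_*(F\otimes S^i\QQ)$ is a direct sum of groups $H^j(S^i\QQ(s))$ with $s$ running over $\Z$, so it is enough to show $H^j(S^i\QQ(s))=0$ for every $s\in\Z$ whenever $0\le i\le n-2$, $j>0$ and $j<2n-2-i$. When $s\ge 0$ this is precisely Proposition \ref{producto tensorial de productos simetricos}(i); when $s<0$ it is Proposition \ref{producto tensorial de productos simetricos}(ii), for which one only needs $i\le n-2$ and $j<2n-2$, and indeed $j<2n-2-i\le 2n-2$. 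Hence all the stated vanishings hold.

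For the ``if'' direction, the point is that the prescribed family of vanishings contains, as special cases, exactly hypotheses \textbf{(a)} and \textbf{(b)} of Theorem \ref{teorema de AM}. Taking $j=i$ for $i=1,\dots,n-2$ --- which belongs to the admissible range $i\le j<2n-2-i$ precisely because $i\le n-2$ forces $i<2n-2-i$ --- reproduces \textbf{(a)}. Taking $j=2n-3-i$ for $i=0,\dots,n-2$ --- which also lies in that range, since $i\le n-2$ gives $i\le 2n-3-i<2n-2-i$ --- reproduces \textbf{(b)} once one rewrites $H^{2n-3-i}_*(F\otimes S^i\QQ)$, for $i=n-2,n-3,\dots,0$, as $H^{n-1}_*(F\otimes S^{n-2}\QQ),H^{n}_*(F\otimes S^{n-3}\QQ),\dots,H^{2n-3}_*(F)$. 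Thus the hypotheses of Theorem \ref{teorema de AM} are met, and $F$ splits as a direct sum of line bundles.

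No step here is a genuine obstacle once Theorem \ref{teorema de AM} and Proposition \ref{producto tensorial de productos simetricos} are granted; the only care needed is the index bookkeeping --- checking that the two ``diagonals'' $j=i$ and $j=2n-3-i$ of the triangular array of hypotheses are exactly conditions \textbf{(a)} and \textbf{(b)}, and that the vanishing ranges of Proposition \ref{producto tensorial de productos simetricos} cover exactly the region $i\le j<2n-2-i$. The mildly delicate point is the endpoint $i=n-2$, where the interval $[\,i,\,2n-3-i\,]=[\,n-2,\,n-1\,]$ has collapsed to just its two endpoints, so that both diagonals are needed there to recover the full content of Theorem \ref{teorema de AM}.
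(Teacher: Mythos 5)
Your argument is correct, but it is worth saying up front that the paper itself contains no proof of this statement: Theorem \ref{teorema ottaviani split} is quoted from Ottaviani's paper \cite{OTT} purely for comparison, and Ottaviani's original argument is quite different from anything in this text. What you have done is re-derive his criterion from the paper's own toolkit, and both halves check out. For the ``only if'' direction, reducing to $H^j(S^i\QQ(s))$ for a single line-bundle summand and invoking Proposition \ref{producto tensorial de productos simetricos}(i) for $s\ge 0$ and (ii) for $s<0$ (where $j<2n-2-i\le 2n-2$ and $i\le n-2$ are exactly the needed bounds) is fine. For the ``if'' direction, your index bookkeeping is exact: the diagonal $j=i$, $1\le i\le n-2$, lies in the admissible range because $i\le n-2$ gives $i<2n-2-i$, and reproduces hypothesis \textbf{(a)} of Theorem \ref{teorema de AM}; the diagonal $j=2n-3-i$, $0\le i\le n-2$, satisfies $i\le 2n-3-i<2n-2-i$ and reproduces hypothesis \textbf{(b)}; so the stronger criterion of Arrondo--Malaspina applies and $F$ splits. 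This containment of hypotheses is precisely what the paper records graphically in its comparison of segments ($L_0=A_0\cup\{(n-2,n-1)\}$ and $R_0=B_0$, with the remaining $L_k$, $R_k$ being superfluous), though it never packages that observation as a proof. There is also no circularity in your route within this paper, since Theorem \ref{teorema de AM} is proved independently via Theorem \ref{split theorem} (Eagon--Northcott complexes and Serre duality), not via Ottaviani's result; what your derivation buys is economy inside the paper's framework, while Ottaviani's original proof is an external, historically prior argument that the stronger criterion here supersedes.
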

In order to do the comparition  we introduce some graphical representation.

\subsection{Graphical representation}
We can express graphically the conditions of the form $H^j_*(F\otimes S^i\QQ)=0$ that appear in Theorem \ref{teorema de AM} as  the points $(i,j)$ in the following diagram (Figure \ref{fig:condiciones k=0}).
\begin{figure}[h!]
\begin{center}
\includegraphics[scale=0.5]{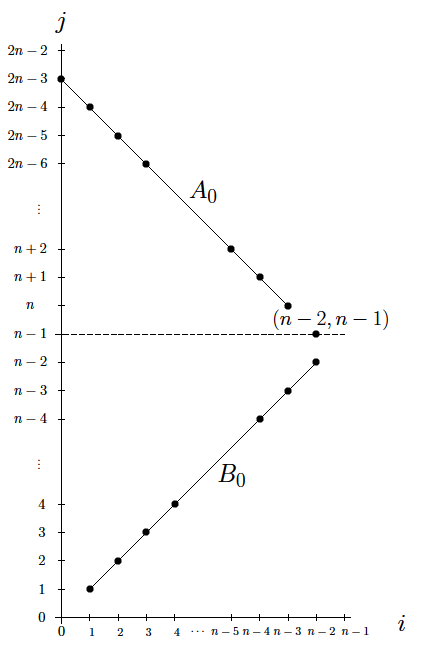}
\caption{\small Hypothesis for Theorem \ref{teorema de AM}}\label{fig:condiciones k=0}
\end{center}
\end{figure}
\newpage
Let us define the segments $A_0$ and $B_0$ appearing in the previous figure as follows:
\begin{itemize}
	\item $A_0=\{(0,2n-3),(1,2n-4),(2,2n-5),\ldots,(n-5,n+2),(n-4,n+1),(n-3,n)\}$
	\item $B_0=\{(1,1),(2,2),(3,3),\ldots,(n-4,n-4),(n-3,n-3),(n-2,n-2)\}$
\end{itemize}
We can compare graphically this splitting criterion with the one made by G. Ottaviani (Theorem \ref{teorema ottaviani split}). For this purpose let us define the following segments:
\begin{itemize}
	\item $L_0=\{(0,2n-3),(1,2n-4),\ldots,(n-3,n),(n-2,n-1)\}$
	\item For $k=1,\ldots,n-2$ we define $L_k=\{(0,2n-3-k),(1,2n-4-k),\ldots,(n-k-2,n-1),(n-k-1,n-2)\}=\{(j,2n-3-k-j)\quad\text{for}\quad j=0,1,\ldots,n-k-1\}$
	\item $R_0=\{(1,1),(2,2),\ldots,(n-3,n-3),(n-2,n-2)\}$
	\item For $k=1,2,\ldots,n-2$ we define $R_k=\{(0,k),(1,k+1),\ldots, (n-3-k,n-3),(n-2-k,n-2)\}=\{(j,k+j)\quad\text{for}\quad j=0,1,\ldots,n-k-2\}$
\end{itemize}
All the cohomological vanishings of Theorem \ref{teorema ottaviani split} consist in:
$$H^j_*(F\otimes S^i\QQ)=0\quad\text{where}\quad(i,j)\in
 \left\{
\begin{array}{l}
L_0\cup L_1\cup \ldots\cup L_{n-2}\\
R_0\cup R_1\cup\ldots\cup R_{n-2}
\end{array}\right.$$
These points correspond with Figure \ref{fig:splitting criteria de ottaviani}. Notice that $L_0=A_0\cup\{(n-2,n-1)\}$ and $R_0=B_0$.
\begin{figure}[h!]
\begin{center}
\includegraphics[scale=0.35]{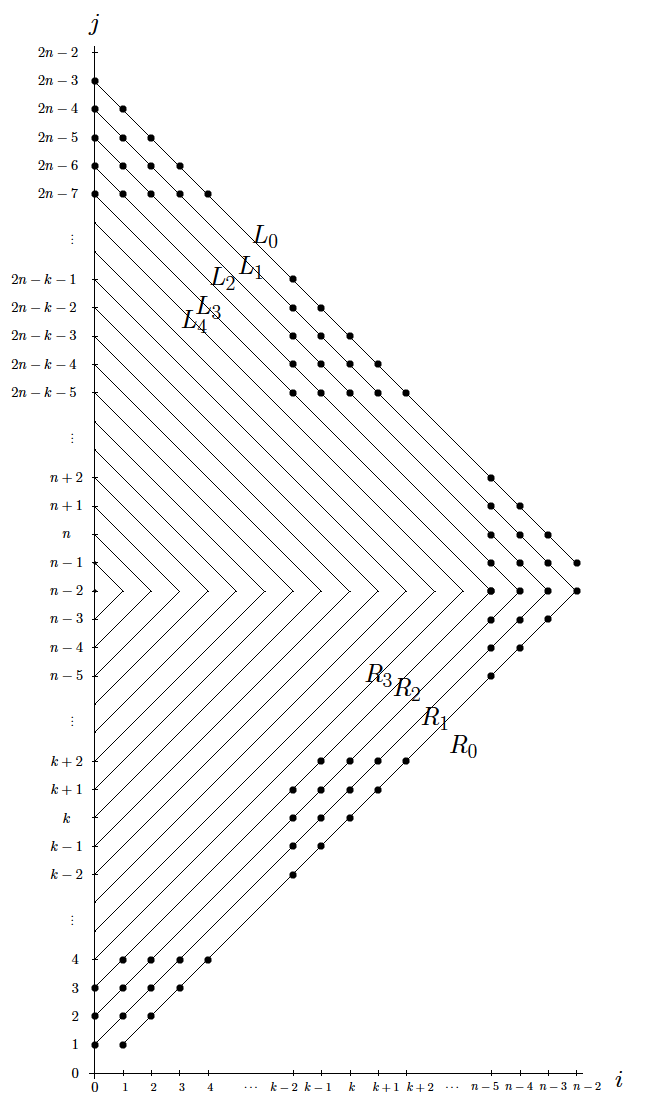}
\caption{\small Hypothesis for Theorem \ref{teorema ottaviani split}}\label{fig:splitting criteria de ottaviani}
\end{center}
\end{figure}
\newpage

\section{Main result}
We can now prove our characterization of direct sums of twists of symmetric powers of $\QQ$ over $\G(1,n)$.  We imitate the technique in \cite{AG}, having as starting point Theorem \ref{teorema de AM}. We first observe which of the hypotheses of this splitting criterion is not satisfied by $\QQ$ and try to see whether the remaining ones characterize vector bundles of the form $(\oplus\OO(l_{i_0}))\bigoplus(\oplus\QQ(l_{i_1}))$.
Observe that, by Lemma \ref{cohom para main1}, $\QQ$ satisfies all the conditions of this splitting criterion except one:
\begin{itemize}
	\item $H^j_*(\QQ\otimes S^j\QQ)=0$ \hspace{0.3cm} $j=1,2,\ldots,n-3,n-2$
	\item  $H^j_*(\QQ\otimes S^i\QQ)=0$ \hspace{0.3cm} $(i,j)\in \{(0,2n-3),(1,2n-4),\ldots,(n-3,n)\}$
	\item $H^{n-1}_*(\QQ\otimes S^{n-2}\QQ)\neq 0$
\end{itemize}

However, also $F=S^k\QQ(l)$ with $k\leq n-2$, satisfies all the hypotheses of Theorem \ref{teorema de AM} but $H^{n-1}_*(F\otimes S^{n-2}\QQ)=0$.
Therefore, we have to add more conditions. Once we have the characterization for the direct sums of twists of $\OO$, $\QQ$, $S^2\QQ$,\ldots, $S^{k-1}\QQ$ the idea is to remove one particular hypothesis and add a few more. In this way we get the characterization of direct sums of twists of $\OO$, $\QQ$,\ldots,$S^k\QQ$ with $k\leq n-2$.

\subsection{More graphical representations}
We continue with some graphical representation of the results that we will obtain.

\begin{remark}{\rm 
Notice that the point $(n-2,n-1)$ of Figure \ref{fig:condiciones k=0} is the continuation of the segment $A_0$ but we put it separately since it is the condition we need to remove to get the next characterization.  }
\end{remark}

\begin{remark}{\rm 
We have also expressed (in Figure \ref{fig:condiciones k=0}) with a dashed line the $n-1$ order of the cohomology of $H^j_*(F\otimes S^i\QQ)=0$ because all the cohomologies we remove in each step are in that line.   }
\end{remark}

We see in Theorem \ref{main theorem} that the characterization of $F=(\oplus\OO(l_{i_0}))\bigoplus(\oplus\QQ(l_{i_1}))$ consists of the vanishing of $H^j_*(F\otimes S^i\QQ)$ for $(i,j)$ in $A_0$, $B_0$ and the points that are shown in Figure \ref{fig:condiciones anadidas}.
\begin{figure}[h!]
\begin{center}
\includegraphics[scale=0.5]{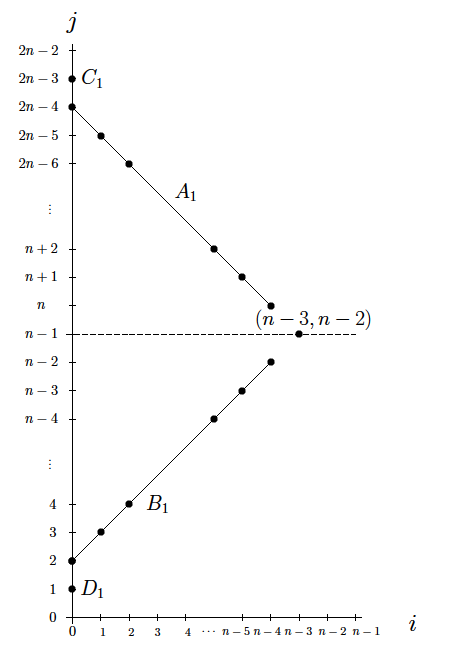}
\caption{\small Hypothesis to be added to characterize $F=(\oplus\OO(l_{i_0}))\bigoplus(\oplus\QQ(l_{i_1}))$}\label{fig:condiciones anadidas}
\end{center}
\end{figure}

Hence, if $F$ satisfies $H^j(F\otimes S^i\QQ)=0$ for $(i,j)$ as in conditions $A_0$ and $B_0$ of Figure \ref{fig:condiciones k=0} and as in all the conditions in Figure \ref{fig:condiciones anadidas} then we get the characterization for $F=(\oplus\OO(l_{i_0}))\bigoplus(\oplus\QQ(l_{i_1}))$. All these conditions can be expressed in Figure \ref{fig:todas condiciones k=1}.
\begin{figure}[h!]
\begin{center}
\includegraphics[scale=0.5]{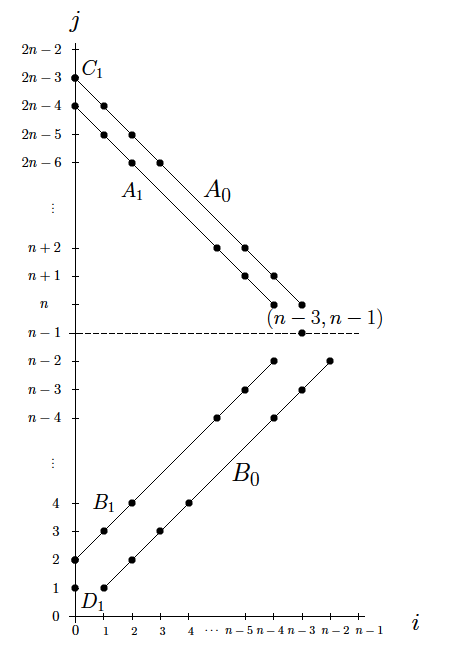}
\caption{\small Hypotheses needed to characterize $F=(\oplus\OO(l_{i_0}))\bigoplus(\oplus\QQ(l_{i_1}))$}\label{fig:todas condiciones k=1}
\end{center}
\end{figure}
\newpage
\begin{remark}{\rm
Notice that $C_1\subseteq A_0$ but we highlight it in Figure \ref{fig:condiciones anadidas} and \ref{fig:todas condiciones k=1} since in the iteration process we need the segments $C_k$ not contained in any other defined segment.   }
\end{remark}

In Theorem \ref{main theorem} we iterate this process to obtain a characterization of direct sums of twists of symmetric products of the universal bundle $$F=(\oplus\OO(l_{i_0}))\bigoplus(\oplus\QQ(l_{i_1}))\bigoplus\ldots\bigoplus (\oplus S^k\QQ(l_{i_k}))$$
until $k=n-2$.

For that, we give the definition of the segments $A_k$, $B_k$, $C_k$, $D_k$. Notice that we have already introduced $A_0$ and $B_0$ in the first part of this chapter.
\begin{defn}\label{definicion rectas}{\rm 
Let us define the following segments for $1\leq k\leq n-2$:
\begin{itemize}
	\item $A_k=\{(0,2n-k-3),(1,2n-k-4),(2,2n-k-5),\ldots,(n-k-4,n+1),(n-k-3,n)\}$
	\item $B_k=\{(0,k+1),(1,k+2),(2,k+3),\ldots,(n-k-4,n-3),(n-k-3,n-2)\}$
	\item $C_k=\{(0,2n-k-2),(1,2n-k-1),(2,2n-k),\ldots,(k-2,2n-4),(k-1,2n-3)\}$
	\item $D_k=\{(0,k),(1,k-1),(2,k-2),\ldots,(k-2,2),(k-1,1)\}$
\end{itemize}
Observe that $A_0$ can be defined in the same way as the $A_k$ with $k=0$ whereas $B_0$ cannot be defined as the $B_k$ with $k=0$.   }
\end{defn}

\begin{remark}\label{remark: para k=n-2 hay condiciones vacias}{\rm   
Notice that $A_{n-2}=\emptyset$ and $B_{n-2}=\emptyset$. Hence, in the case that $k\geq n-2$ we are not adding any cohomological condition with the notation of $A_k$ and $B_k$.

Moreover, all the conditions corresponding to the points of the lines $C_k$ and $D_k$ make no sense for $k\geq n-1$
because of the way they are defined. 
For example, if $k=n-1$ we get that $C_{n-1}=\{(0,n-1),(1,n),(2,n+1),\ldots,(n-3,2n-4),(n-2,2n-3)\}$ and $D_{n-1}=\{(0,n-1),(1,n-2),(2,n-3),\ldots,(n-3,2),(n-2,1)\}$. The point $(0,n-1)$ of $C_{n-1}$ and $D_{n-1}$ corresponds to the condition $H^{n-1}_*(F)=0$ and this cannot be true if we want to characterize $F=(\oplus\OO(l_{i_0}))\bigoplus(\oplus\QQ(l_{i_1}))\bigoplus\ldots\bigoplus(\oplus S^k\QQ(l_{i_k}))$ with $k=n-1$ since $H^{n-1}_*(S^{n-1}\QQ)\neq 0$ (see Proposition \ref{productos simetricos con cohomologia}).   }
\end{remark}

We show graphically in Figure \ref{fig:definicion de rectas} which are the points of the segments defined in Definition \ref{definicion rectas} (that correspond to the cohomological conditions $H^j_*(F\otimes S^i\QQ)=0$). We also draw the point $(n-k-2,n-1)$ since we use it in the lemmas and proposition of the following section.

\begin{figure}[h]
\begin{center}
\includegraphics[scale=0.34]{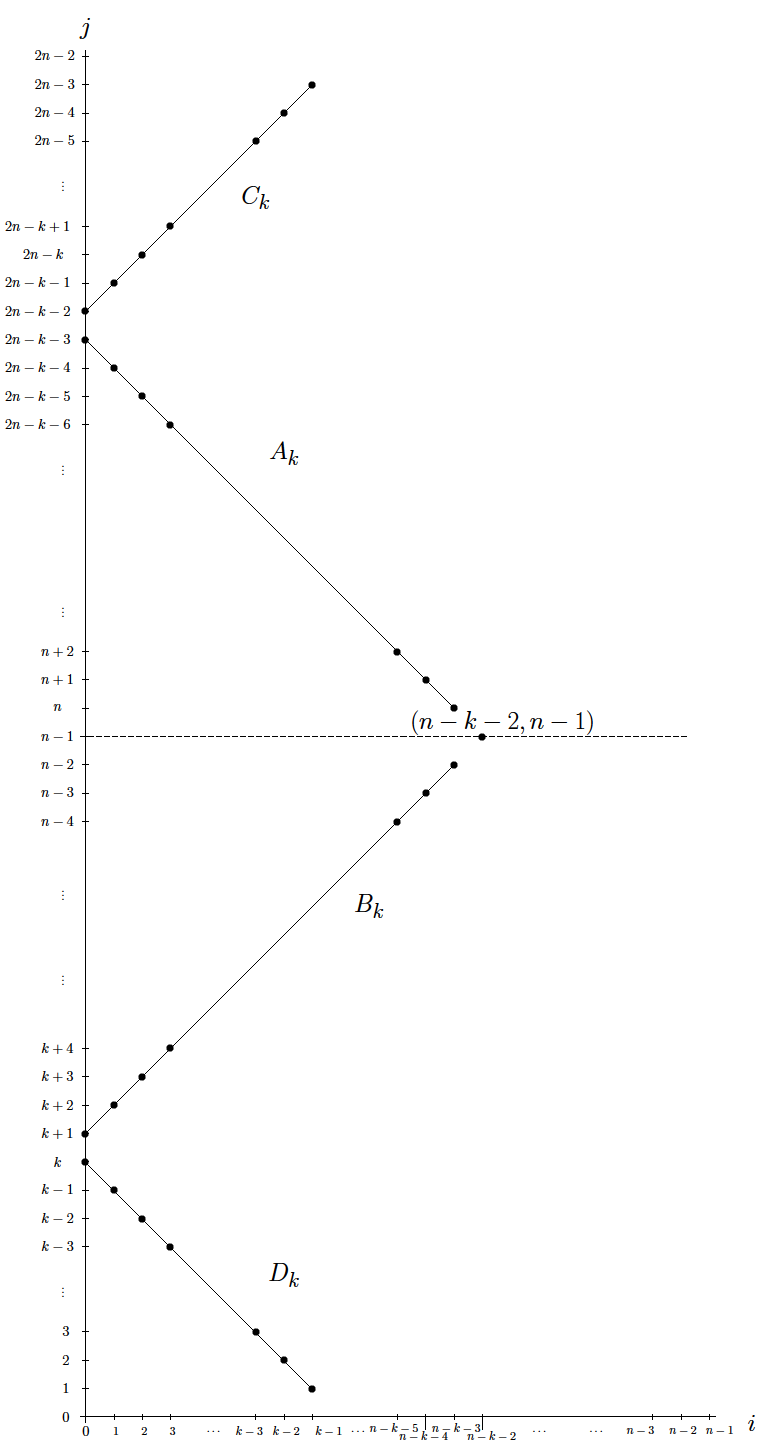}
\caption{\small $A_k$, $B_k$, $C_k$, $D_k$ and the point $(n-k-2,n-1)$}\label{fig:definicion de rectas}
\end{center}
\end{figure}

\begin{remark}{\rm
One can easily see that the points corresponding to:
$$(i,j)\in
 \left\{
\begin{array}{l}
A_0\cup A_1\cup \ldots\cup A_{k-1}\\
B_0\cup B_1\cup\ldots\cup B_{k-1}\\
C_1\cup C_2\cup\ldots\cup C_{k-1}\\
D_1\cup D_2\cup\ldots\cup D_{k-1}
\end{array}\right.$$
are inside of the set of points:
$$(i,j)\in
 \left\{
\begin{array}{l}
A_0\cup A_1\cup \ldots\cup A_{k}\\
B_0\cup B_1\cup\ldots\cup B_{k}\\
C_1\cup C_2\cup\ldots\cup C_{k}\\
D_1\cup D_2\cup\ldots\cup D_{k}
\end{array}\right.$$
 }
\end{remark}

\begin{figure}[h]
\begin{center}
\includegraphics[scale=0.39]{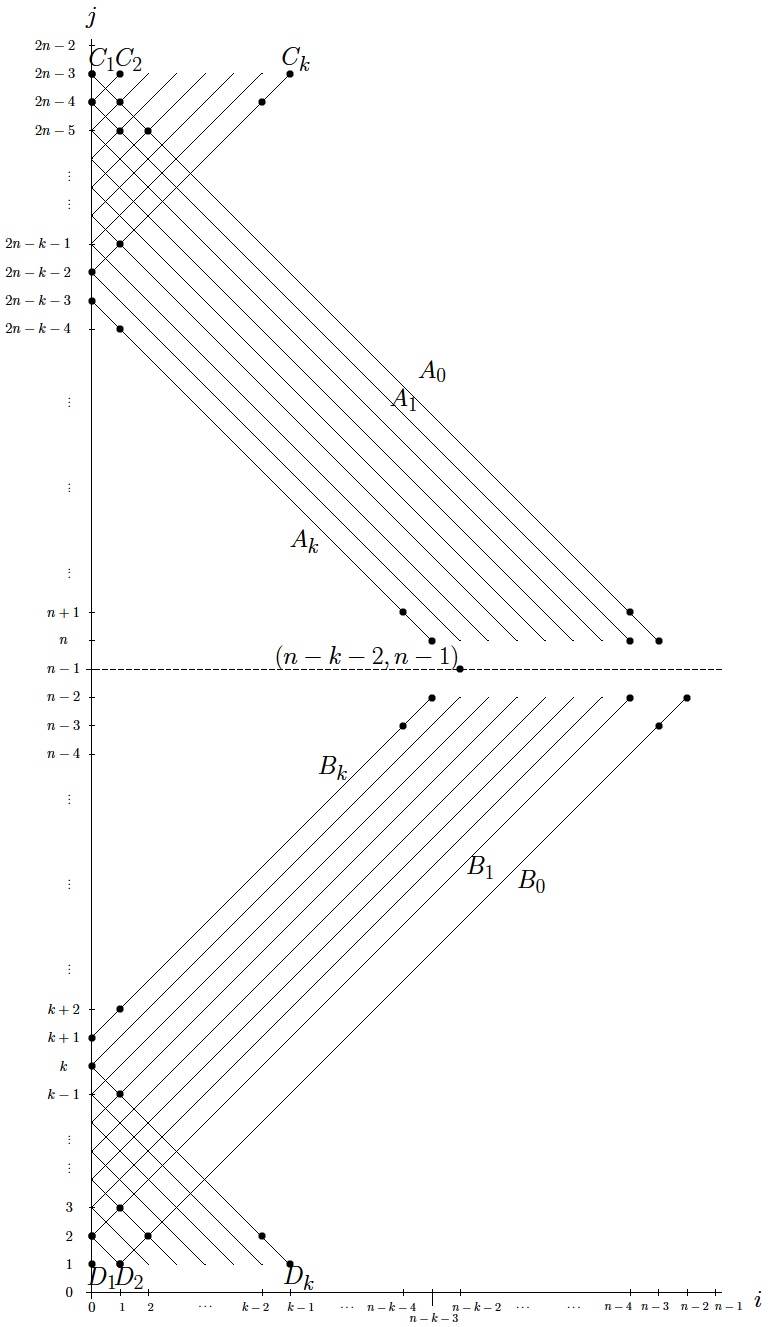}
\caption{\small Conditions to characterize direct sum of twists of $\OO, \QQ, S^2\QQ,\ldots, S^k\QQ$}\label{fig:caso k}
\end{center}
\end{figure}
This observation will be useful during the proof of the main theorem.
Specifically, these last conditions plus the point $(n-k-2,n-1)$ form the hypothesis of the main theorem. We show them graphically in Figure \ref{fig:caso k}. One starts drawing the lines corresponding to $A_k$, $B_k$, $C_k$, $D_k$ and fill the corresponding spaces with lines until the ones drawn in Figure \ref{fig:todas condiciones k=1} (except condition $(n-3,n-1)$).

\subsection{Previous lemmas}
Before giving the main result, let us give a general idea of 
the technique we use. To characterize 
when a vector bundle $F$ decomposes as
$(\oplus \OO(l_{i_0}))\bigoplus(\oplus\QQ(l_{i_1}))$ from the characterization of $\oplus \OO(l_{i_0})$ we can assume, after twisting $F$, $H^{n-1}(F\otimes S^{n-2}\QQ(-n))\neq 0$. 
Otherwise we could use Theorem \ref{teorema de AM}.
Then we use a particular Eagon-Northcott complex to transform $H^{n-1}(F\otimes S^{n-2}\QQ(-n))$ into $H^0(F\otimes \QQ\dual)$.
By Serre duality we have also $H^{n-1}(F\dual\otimes S^{n-2}\QQ\dual(-n-1))\neq 0$ and again, by a particular Eagon-Northcott complex, we transform it into $H^0(F\dual\otimes \QQ)$. Hence, we obtain the maps $\QQ\longrightarrow F$ and $F\longrightarrow \QQ$ and we
show that their composition is the identity of $\QQ$ or a multiple of it. Hence $\QQ$ is a direct summand of $F$ and we proceed by induction on $rank(F)$. 

Next we try to characterize when $F$ decomposes as $(\oplus\OO(l_{i_0}))\bigoplus(\oplus\ \QQ(l_{i_1}))\bigoplus(\oplus S^2\QQ(l_{i_2}))$. But this time we can assume $H^{n-1}(F\otimes S^{n-3}\QQ(-n))\neq 0$. By using Eagon-Northcott complexes and Serre duality we obtain the maps $S^2\QQ\longrightarrow F$ and $F\longrightarrow S^2\QQ$ and again we prove their composition to be the identity or a multiple of it, which allows to complete the proof.

In general we use induction on $k$ to characterize $(\oplus\OO(l_{i_0}))\bigoplus(\oplus\QQ(l_{i_1}))\bigoplus\ldots\bigoplus(\oplus\ S^k\QQ(l_{i_k}))$
from the characterization of $(\oplus\OO(l_{i_0}))\bigoplus(\oplus\QQ(l_{i_1}))\bigoplus\ldots\bigoplus(\oplus S^{k-1}\QQ(l_{i_{k-1}}))$ and for this purpose we assume $H^{n-1}(F\otimes S^{n-k-1}\QQ(-n))\neq 0$ (since $H^{n-1}(S^k\QQ\otimes S^{n-k-1}\QQ(-n)) \neq 0$ as we have observed in Lemma \ref{cohom para main111}). And again we get suitable maps $S^k\QQ\longrightarrow F$ and $F\longrightarrow S^k\QQ$. 

In the Lemmas of this section we see how to produce maps from the cohomology groups. We keep the notation of Definition \ref{definicion rectas}.

\begin{lem}\label{lema1}
Fix $k\in\{1,\ldots,n-2\}$. There exists a natural map
$$H^0(F\otimes S^k\QQ\dual)\stackrel{\psi_1}{\longrightarrow} H^{n-1}(F\otimes S^{n-k-1}\QQ(-n)).$$
Moreover, if the following conditions hold:
$$H^j_*(F\otimes S^i\QQ)=0\quad\text{with}\quad (i,j)\in B_k\cup D_k\cup (n-k-2,n-1)$$
the map $\psi_1$ is a surjective map.
\end{lem}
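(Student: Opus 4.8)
The plan is to read off the map $\psi_1$ directly from the Eagon–Northcott complex $(R_{n-k-1}^\vee \otimes \OO(-1))$, spliced with $(R_{n-k-1}) \otimes \OO(-n)$, exactly as was done in the proof of Theorem \ref{split theorem} but with $n-1$ replaced by $n-k-1$. First I would recall the first half of that complex, which begins $0 \to S^{n-k-1}\QQ^\vee(-1) \to V^* \otimes S^{n-k-2}\QQ^\vee(-1) \to \cdots$; note that $S^{n-k-1}\QQ^\vee(-1) \simeq S^{n-k-1}\QQ(-n+k)$ and $S^k\QQ^\vee \simeq S^k\QQ(-k)$ by Remark \ref{isomorfismosentrefibradosuniversales}, and that the degree matching is arranged so that, after tensoring this complex with $F \otimes S^k\QQ$ and twisting appropriately, the left-most term becomes $F \otimes S^k\QQ^\vee$ (up to identifying $S^k\QQ \otimes S^{n-k-1}\QQ^\vee(-1)$ — here I would use that $S^k\QQ \otimes S^{n-k-1}\QQ$ contains $S^{n-1}\QQ$ as a Schur summand) and the right-most non-trivial cohomology contributing term is $F \otimes S^{n-k-1}\QQ(-n)$ in degree $n-1$. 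Chasing the hypercohomology spectral sequence (equivalently, breaking the complex into short exact sequences and climbing the connecting maps) produces the natural map $\psi_1 : H^0(F \otimes S^k\QQ^\vee) \to H^{n-1}(F \otimes S^{n-k-1}\QQ(-n))$.

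Next I would prove surjectivity. Break the relevant half of the spliced Eagon–Northcott complex into short exact sequences $0 \to Z_m \to (\text{term}_m) \to Z_{m-1} \to 0$, tensor each with $F$, and take the long exact cohomology sequences. The map $\psi_1$ is a composite of connecting homomorphisms $\delta_m : H^{m-1}(F \otimes Z_{m-1}) \to H^m(F \otimes Z_m)$; each $\delta_m$ is surjective as soon as the cohomology of the middle term $F \otimes (\text{term}_m)$ in the appropriate degree vanishes. The intermediate terms of the complex are of the form $\bigwedge^a V \otimes S^b\QQ$ (or their duals $\bigwedge^a V^* \otimes S^b\QQ^\vee(-1)$ after the splice), so each required vanishing is of the shape $H^j(F \otimes S^b\QQ \otimes (\text{twist})) = 0$, i.e. $H^j_*(F \otimes S^b\QQ)=0$, and tracking the index $(b,j)$ through the complex gives precisely a sequence of points. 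I would verify that this sequence of points is exactly $B_k \cup D_k$ together with the single point $(n-k-2,n-1)$: the points coming from the $(R_{n-k-1})\otimes\OO(-n)$ half trace out $D_k = \{(0,k),(1,k-1),\ldots,(k-1,1)\}$ (the descending diagonal), those from the $(R_{n-k-1}^\vee\otimes\OO(-1))$ half trace out $B_k$, and the one term at the splice point contributes $(n-k-2,n-1)$ — this is why that extra point is singled out in Figure \ref{fig:definicion de rectas}. One also needs the vanishing $H^j(S^{n-k-1}\QQ \otimes S^{n-k-2}\QQ \otimes(\text{twists}))$-type statements from Lemma \ref{cohom para lema 1} and Lemma \ref{cohomologia para la demostracion} to handle the few terms where $F$ is replaced by a constant sheaf $\OO$ in the splice, but these are already recorded in the preliminaries.

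The main obstacle I expect is bookkeeping rather than conceptual: getting the twists exactly right so that tensoring the Eagon–Northcott complexes by the correct line bundle makes the two ends land on $F \otimes S^k\QQ^\vee$ and $H^{n-1}(F\otimes S^{n-k-1}\QQ(-n))$ simultaneously, and then checking that the cohomological vanishings forced at each stage reproduce, one-for-one and with no leftovers, the point sets $B_k$, $D_k$, and $(n-k-2,n-1)$ as drawn in Figure \ref{fig:definicion de rectas}. A secondary subtlety is that the intermediate terms involve $\bigwedge^a V \otimes S^b\QQ$ with $V$ of dimension $n+1$; one must confirm that $\bigwedge^a V$ is just a trivial bundle of the right rank so that the vanishing $H^j_*(F\otimes S^b\QQ)=0$ genuinely suffices, and that no Schur-functor decomposition of $S^k\QQ\otimes S^{b}\QQ$ introduces a summand outside the range controlled by the hypotheses — here Lemma \ref{cohomologia para la demostracion}(iii) is what guarantees that only the "resonant" summand $S^{n-1}\QQ$ survives at the splice and all others vanish.
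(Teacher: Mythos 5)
Your overall strategy (chase connecting homomorphisms through a spliced Eagon--Northcott complex, with the hypothesized vanishings forcing each step to be surjective) is the right one, but the specific complex you chose does not exist, and the device you introduce to repair it breaks the surjectivity bookkeeping. First, $(R^\vee_{n-k-1})\otimes\OO(-1)$ cannot be spliced with $(R_{n-k-1})\otimes\OO(-n)$ when $k\geq 1$: the right end of the latter is $S^{n-k-1}\QQ(-n)$ while the left end of the former is $S^{n-k-1}\QQ\dual(-1)\simeq S^{n-k-1}\QQ(-n+k)$, and at the other ends one has $\bigwedge^{n-k-1}\s(-1)$ versus $\bigwedge^{n-k-1}\s\dual(-n)\simeq\bigwedge^{k}\s(-n-1)$; in both cases the twists (and in the second case also the wedge degrees) disagree, so ``replace $n-1$ by $n-k-1$ in the proof of Theorem \ref{split theorem}'' does not produce a complex. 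Second, your workaround of tensoring by $F\otimes S^k\QQ$ and invoking the summand $S^{n-1}\QQ\subset S^k\QQ\otimes S^{n-k-1}\QQ$ ruins the count of hypotheses: every intermediate term then becomes $\bigwedge^a V\otimes F\otimes S^b\QQ\otimes S^k\QQ\simeq\bigoplus_i\bigwedge^a V\otimes F\otimes S^{b+k-2i}\QQ(i)$, so surjectivity of the connecting maps would need $H^j_*(F\otimes S^c\QQ)=0$ for the whole Clebsch--Gordan range of $c$, far more vanishings than the points of $B_k\cup D_k\cup\{(n-k-2,n-1)\}$ allowed by the statement; and Lemma \ref{cohomologia para la demostracion}(iii) cannot rescue this, since it involves no $F$ and says nothing about $H^j_*(F\otimes S^b\QQ\otimes S^k\QQ)$.

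The paper avoids both problems by splicing two \emph{different} Eagon--Northcott complexes: $(R\dual_{n-1-k})\otimes\OO(-k-1)$ and $(R_k)\otimes\OO(-k)$, glued along $\bigwedge^{n-k-1}\s(-k-1)\simeq\bigwedge^{k}\s\dual(-k)$ (Remark \ref{isomorfismosentrefibradosuniversales}). The resulting exact complex runs from $S^{n-k-1}\QQ\dual(-k-1)\simeq S^{n-k-1}\QQ(-n)$ to $S^k\QQ(-k)\simeq S^k\QQ\dual$, and every intermediate term is a single $\bigwedge^a V$ or $\bigwedge^a V^*$ tensored with one symmetric power $S^b\QQ(t)$; tensoring with $F$ alone and composing the $n-1$ connecting maps gives $\psi_1$, and the required vanishings are exactly $(n-k-2,n-1)$ at the first step (next to $S^{n-k-1}\QQ(-n)$, not at the splice point as you guessed), then the points of $B_k$ along the $V^*$-half, then those of $D_k$ along the $V$-half, with no leftovers. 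Also, Lemmas \ref{cohom para lema 1} and \ref{cohom para lema 3} play no role here; they are needed only later, in Proposition \ref{prop}, to get the isomorphisms $\psi_3$ and $\psi_4$.
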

\begin{proof}
Since $k\leq n-2$ we consider the Eagon-Northcott complex  $(R\dual_{n-1-k})\otimes \OO(-k-1)$, glued together with $(R_k)\otimes \OO(-k)$ and using Remark \ref{isomorfismosentrefibradosuniversales} we get:
\begin{equation*}
0\longrightarrow S^{n-k-1}\QQ(-n)\longrightarrow V^*\otimes S^{n-k-2}\QQ(-n+1)\longrightarrow \bigwedge^2 V^*\otimes S^{n-k-3}\QQ(-n+2)\longrightarrow\ldots
\end{equation*}
\begin{equation*}
\ldots \longrightarrow \bigwedge^{n-k-2}V^*\otimes \QQ(-k-2)\longrightarrow \bigwedge^{n-k-1}V^*\otimes \OO(-k-1)\longrightarrow \bigwedge^k V\otimes \OO(-k)\longrightarrow \bigwedge^{k-1}V\otimes \QQ(-k)\longrightarrow \ldots
\end{equation*}
\begin{equation}\label{ea1}
\ldots\longrightarrow \bigwedge^2V\otimes S^{k-2}\QQ(-k)\longrightarrow V\otimes S^{k-1}\QQ(-k)\longrightarrow S^k\QQ(-k)\longrightarrow 0
\end{equation}

Now we tensorize it by $F$ and take all the short exact sequences
with their corresponding cokernels ($K\dual_{j}$). Then apply cohomology. Let us start from left to right:
\begin{equation*}
\resizebox{\textwidth}{!}{
\xymatrix{
H^{n-2}(F\otimes S^{n-k-1}\QQ(-n))\ar[r] & V^*\otimes H^{n-2}(F\otimes S^{n-k-2}\QQ(-n+1))\ar[r] & H^{n-2}(F\otimes K_{n-k-2}\dual(-k-1))\ar[dll]_{\varphi_1}\\
H^{n-1}(F\otimes S^{n-k-1}\QQ(-n)) \ar[r] &  V^*\otimes \underbrace{H^{n-1}(F\otimes S^{n-k-2}\QQ(-n+1))}_{\text{point } (n-k-2,n-1)}
 \ar[r] &  H^{n-1}(F\otimes K_{n-k-2}\dual(-k-1))
}}
\end{equation*}

Moreover, if $H^{n-1}_*(F\otimes S^{n-k-2}\QQ)=0$, then $\varphi_1$ is a surjective map. Let us repeat the same argument.
By using the first part of (\ref{ea1}) we construct $\varphi_j$ for $j=2,\ldots,n-k-2$:
\begin{equation*}
\resizebox{\textwidth}{!}{
\xymatrix{
 H^{n-j-1}(F\otimes K_{n-k-j}\dual(-k-1))\ar[r] &\bigwedge^j V^*\otimes H^{n-j-1}(F\otimes S^{n-k-j-1}\QQ(-n+j))\ar[r] & H^{n-j-1}(F\otimes K_{n-k-j+1}\dual(-k-1))\ar[dll]_{\varphi_{j}}\\
 H^{n-j}(F\otimes K_{n-k-j}\dual(-k-1)) \ar[r] &  \bigwedge^j V^*\otimes \underbrace{H^{n-j}(F\otimes S^{n-k-j-1}\QQ(-n+j))}_{\text{point } (n-k-j-1,n-j)\in B_k}
 \ar[r] &  H^{n-j}(F\otimes K_{n-k-j+1}\dual(-k-1))
}}
\end{equation*}
We use the middle part of (\ref{ea1}) for the maps $\varphi_{n-k-1}$ and $\varphi_{n-k}$:
\begin{equation*}
\resizebox{\textwidth}{!}{
\xymatrix{
H^{k}(F\otimes K_{1}\dual(-k-1))\ar[r] & \bigwedge^{n-k-1} V^*\otimes  H^{k}(F(-k-1))\ar[r] & H^{k}(F\otimes \bigwedge^{n-k-1}\s(-k-1))\ar[dll]_{\varphi_{n-k-1}}\\
H^{k+1}(F\otimes K_{1}\dual(-k-1)) \ar[r] & \bigwedge^{n-k-1} V^*\otimes \underbrace{ H^{k+1}(F(-k-1))}_{\text{point } (0,k+1)\in B_k}
 \ar[r] &   H^{k+1}(F\otimes \bigwedge^{n-k-1}\s(-k-1))
}}
\end{equation*}
\begin{equation*}
\resizebox{\textwidth}{!}{
\xymatrix{
 H^{k-1}(F\otimes\bigwedge^k\s\dual(-k))\ar[r] & \bigwedge^k V\otimes H^{k-1}( F(-k))\ar[r] &  H^{k-1}(F\otimes K'_{1}(-k))\ar[dll]_{\varphi_{n-k}}\\
 H^{k}(F\otimes\bigwedge^k\s\dual(-k)) \ar[r] &   \bigwedge^k V\otimes\underbrace{H^{k}( F(-k))}_{\text{point } (0,k)\in D_k}
 \ar[r] &   H^{k}(F\otimes \bigwedge^{n-k-1}\s(-k-1))
}}
\end{equation*}
We repeat now for $\varphi_j$ with $j=n-k+1,\ldots,n-2$ by using the third part of (\ref{ea1}), and finally for $\varphi_{n-1}$:
\begin{equation*}
\resizebox{\textwidth}{!}{
\xymatrix{
H^{n-j-1}(F\otimes K'_{k+j-n}(-k))\ar[r] & \bigwedge^{n-j} V\otimes H^{n-j-1}(F\otimes S^{k+j-n}\QQ(-k))\ar[r] &  H^{n-j-1}(F\otimes K'_{k+j-n+1}(-k))\ar[dll]_{\varphi_{j}}\\
 H^{n-j}(F\otimes K'_{k+j-n}(-k)) \ar[r] &   \bigwedge^{n-j} V\otimes\underbrace{H^{n-j}(F\otimes S^{k+j-n}\QQ(-k))}_{\text{point } (k+j-n,n-j)\in D_k}
 \ar[r] &  H^{n-j}(F\otimes K'_{k+j-n+1}(-k))
}}
\end{equation*}
\begin{equation*}
\resizebox{14cm}{!}{
\xymatrix{
H^{0}(F\otimes K'_{k-1}(-k))\ar[r] & V\otimes H^{0}( F\otimes S^{k-1}\QQ(-k))\ar[r] &  H^{0}(F\otimes S^k\QQ(-k))\ar[dll]_{\varphi_{n-1}}\\
H^{1}(F\otimes K'_{k-1}(-k)) \ar[r] &   V\otimes\underbrace{H^{1}( F\otimes S^{k-1}\QQ(-k))}_{\text{point } (k-1,1)\in D_k}
 \ar[r] &   H^{1}(F\otimes S^k\QQ(-k))
}}
\end{equation*}

By the composition of all $\varphi_j$ for $j=1,\ldots,n-1$ we have finally constructed the map $\psi_1$:
$$H^{0}(F\otimes S^{k}\QQ(-k))\stackrel{\psi_1}{\longrightarrow} H^{n-1}(F\otimes S^{n-k-1}\QQ(-n))$$

Moreover, if all the cohomologies of the statement vanish then $\psi_1$ is a surjective map.
\end{proof}

\begin{remark}{\rm
Notice that the cohomological condition corresponding
to the point $(n-k-2,n-1)$ makes no sense if $k> n-2$. This is why Theorem \ref{main theorem} will be valid only for $k\leq n-2$.   }
\end{remark}

For the next lemma we need the conditions of the form
$H^j_*(F\dual\otimes S^i\QQ)=0$ for some $(i,j)$. More precisely, the conditions we need are:
\begin{itemize}
	\item $H^{n-2}_*(F\dual\otimes S^{n-k-3}\QQ\dual)=H^{n-3}_*(F\dual\otimes S^{n-k-4}\QQ\dual)=\ldots$\\
	$\ldots=H^{k+3}_*(F\dual\otimes S^{2}\QQ\dual)=H^{k+2}_*(F\dual\otimes \QQ\dual)=H^{k+1}_*(F\dual)=0$
	\item $H^{k}_*(F\dual)=H^{k-1}_*(F\dual\otimes\QQ\dual)=H^{k-2}_*(F\dual\otimes S^{2}\QQ\dual)=\ldots$\\
	$\ldots=H^{3}_*(F\dual\otimes S^{k-3}\QQ)=H^{2}_*(F\dual\otimes S^{k-2}\QQ)=H^{1}_*(F\dual\otimes S^{k-1}\QQ)=0$
	\item $H^{n-1}_*(F\dual\otimes S^{n-k-2}\QQ\dual)=0$
\end{itemize}
For simplicity, by using the Serre duality we transform them into another ones without $F\dual$:
\begin{itemize}
	\item $H^{n}_*(F\otimes S^{n-k-3}\QQ)=H^{n+1}_*(F\otimes S^{n-k-4}\QQ)=\ldots$\\
	$\ldots=H^{2n-5-k}_*(F\otimes S^{2}\QQ)=H^{2n-4-k}_*(F\otimes \QQ)=H^{2n-3-k}_*(F)=0$
	\item $H^{2n-2-k}_*(F)=H^{2n-1-k}_*(F\otimes \QQ)=H^{2n-k}_*(F\otimes S^{2}\QQ)=\ldots$\\
	$\ldots=H^{2n-5}_*(F\otimes S^{k-3}\QQ)=H^{2n-4}_*(F\otimes S^{k-2}\QQ)=H^{2n-3}_*(F\otimes S^{k-1}\QQ)=0$
	\item $H^{n-1}_*(F\otimes S^{n-k-2}\QQ)=0$
\end{itemize}
These correspond to the segments $A_k$, $C_k$ and the point $(n-k-2,n-1)$.

\begin{lem}\label{lema2}
Fix $k\in\{1,\ldots,n-2\}$. There exists a natural map:
$$H^0(F\dual\otimes S^k\QQ)\stackrel{\psi_2}{\longrightarrow} H^{n-1}(F\dual\otimes S^{n-k-1}\QQ\dual(-1))$$
Moreover, if the following conditions hold:
$$H^j_*(F\otimes S^i\QQ)=0\quad\text{with}\quad (i,j)\in A_k\cup C_k\cup (n-k-2,n-1)$$
the map $\psi_2$ is a surjective map.
\end{lem}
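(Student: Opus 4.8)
The strategy is to mirror the construction in Lemma \ref{lema1}, but now applied to $F\dual$ in place of $F$ and with the roles of $\QQ$ and $\QQ\dual$ exchanged. The plan is to write down the appropriate Eagon-Northcott complex, tensor it with $F\dual$, break it into short exact sequences, and chase cohomology along the whole complex, producing a composite map $\psi_2$ exactly as $\psi_1$ was produced in the previous lemma; the surjectivity of $\psi_2$ will then follow from the vanishing of the cohomology groups that appear as the ``connecting'' terms, which turn out to be precisely the points in $A_k\cup C_k\cup(n-k-2,n-1)$.

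First I would take the Eagon-Northcott complex $(R_{n-1-k})\otimes\OO(-k)$ glued together with $(R\dual_k)\otimes\OO(-1)$, which, after applying the isomorphisms of Remark \ref{isomorfismosentrefibradosuniversales} (in particular $S^j\QQ\dual\simeq(S^j\QQ)(-j)$ and $\QQ\dual\simeq\QQ(-1)$ in rank $2$), has $S^{n-k-1}\QQ\dual(-1)$ at the left end and $S^k\QQ$ at the right end. Tensoring with $F\dual$ and cutting into short exact sequences with cokernels $K_j$, one applies cohomology to each piece and obtains connecting maps $\varphi_j$; the composition $\varphi_{n-1}\circ\cdots\circ\varphi_1$ is the desired map
$$H^0(F\dual\otimes S^k\QQ)\stackrel{\psi_2}{\longrightarrow}H^{n-1}(F\dual\otimes S^{n-k-1}\QQ\dual(-1)).$$

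Next I would check, step by step, which intermediate cohomology groups must vanish for each $\varphi_j$ to be surjective. As in Lemma \ref{lema1}, at the $j$-th stage the relevant group is of the form $H^{\ast}(F\dual\otimes S^{\ast}\QQ\dual)$ or $H^{\ast}(F\dual\otimes S^{\ast}\QQ)$; translating via Serre duality (using $\omega_{\G(1,n)}=\OO(-n-1)$ and $\dim\G(1,n)=2n-2$) turns each of these into a group $H^j_*(F\otimes S^i\QQ)$. The bookkeeping, which is the same as the list displayed just before the statement of this lemma, shows that these $(i,j)$ are exactly the points of $A_k$ (from the first two strings of vanishings on $F\dual$), of $C_k$ (from the middle part of the complex), together with the isolated point $(n-k-2,n-1)$ coming from the term $H^{n-1}(F\dual\otimes S^{n-k-2}\QQ\dual)$ at the junction of the two glued complexes. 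When all these vanish, every $\varphi_j$ is surjective, hence so is $\psi_2$.

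The main obstacle is the indexing: one has to verify that the correct Eagon-Northcott complex is the one whose two ends are $S^{n-k-1}\QQ\dual(-1)$ and $S^k\QQ$ (so that after dualizing $F$ one lands on the groups in the statement), and that the degree shifts coming from $\otimes\OO(-k)$ and $\otimes\OO(-1)$ on the two halves are compatible at the gluing spot. Once the complex is pinned down, the cohomology chase is completely parallel to that of Lemma \ref{lema1}, and the identification of the intermediate terms with $A_k$, $C_k$ and $(n-k-2,n-1)$ is just the translation through Serre duality that was carried out in the paragraph preceding the lemma.
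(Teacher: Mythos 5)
Your overall strategy is exactly the paper's: tensor a suitable glued Eagon--Northcott complex with $F\dual$, chase cohomology through the short exact sequences to build $\psi_2$ as a composition of connecting maps, and translate the vanishings needed on $F\dual$ into conditions on $F$ via Serre duality, which is precisely how the points of $A_k\cup C_k\cup\{(n-k-2,n-1)\}$ arise. However, the specific complex you name is not the right one and in fact cannot be assembled as written: $(R\dual_k)\otimes \OO(-1)$ ends in $\bigwedge^k\s(-1)\simeq \bigwedge^{n-1-k}\s\dual$, while $(R_{n-1-k})\otimes\OO(-k)$ begins with $\bigwedge^{n-1-k}\s\dual(-k)$, so the two halves do not glue (the twists differ by $k$); gluing in the opposite order fails as well, since $S^{n-1-k}\QQ(-k)\not\simeq S^k\QQ\dual(-1)$ in general. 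In particular no ordering of your two pieces produces the ends $S^{n-k-1}\QQ\dual(-1)$ and $S^k\QQ$ that your argument (correctly) requires, except for degenerate values of $k$ and $n$.

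The complex actually used is $(R\dual_{n-1-k})\otimes\OO(-1)$ glued with $(R_k)$ untwisted, i.e.\ the complex (\ref{ea1}) of Lemma \ref{lema1} twisted by $\OO(k)$: its left end is $S^{n-k-1}\QQ\dual(-1)$, the junction is $\bigwedge^{n-1-k}\s(-1)\simeq\bigwedge^{k}\s\dual$, and its right end is $S^k\QQ$. With that correction the rest of your plan goes through verbatim -- one tensors by $F\dual$, takes cokernels, and the chase is parallel to Lemma \ref{lema1}, with the Serre-dual bookkeeping giving $A_k$, $C_k$ and the junction point $(n-k-2,n-1)$; and since the hypotheses are stated for all twists $H^j_*$, the global twist by $k$ is harmless for surjectivity. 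So the flaw is an indexing slip rather than a conceptual one, but the choice of complex is the one substantive step of this construction and must be fixed for the stated (untwisted) map $\psi_2$ to exist.
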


\begin{proof}
Argue in the same way as before. Since $k\leq n-2$ we can consider the Eagon-Northcott complex  $(R\dual_{n-1-k})\otimes \OO(-1)$ glue together with $(R_k)$:
\begin{equation*}
0\longrightarrow S^{n-k-1}\QQ\dual(-1)\longrightarrow V^*\otimes S^{n-k-2}\QQ\dual(-1)\longrightarrow \bigwedge^2 V^*\otimes S^{n-k-3}\QQ\dual(-1)\longrightarrow\ldots
\end{equation*}
\begin{equation*}
\ldots \longrightarrow \bigwedge^{n-k-2}V^*\otimes \QQ\dual(-1)\longrightarrow \bigwedge^{n-k-1}V^*\otimes \OO(-1)\longrightarrow \bigwedge^k V\otimes \OO\longrightarrow \bigwedge^{k-1}V\otimes \QQ\longrightarrow \ldots
\end{equation*}
\begin{equation}\label{ea2}
\ldots\longrightarrow \bigwedge^2V\otimes S^{k-2}\QQ\longrightarrow V\otimes S^{k-1}\QQ\longrightarrow S^k\QQ\longrightarrow 0
\end{equation}
This time we tensorize it by $F\dual$ and take all the short exact sequences
with their corresponding cokernels ($K\dual_{j}$) as before.
\end{proof}

\begin{notation}{\rm
We want to remark the following isomorphism:
$$\K=H^{2n-2}(\OO(-n-1))\cong H^{2n-2}(S^{n-k-1}\QQ(-n)\otimes S^{n-k-1}\QQ\dual(-1)).$$
We call the following perfect pairing:
\begin{equation*}
 \begin{array}{rcl}
	 H^{n-1}(F\otimes S^{n-k-1}\QQ(-n))\times H^{n-1}(F\dual\otimes
	  S^{n-k-1}\QQ\dual(-1)) & \stackrel{\phi}{\longrightarrow} & H^{2n-2}(\OO(-n-1)) \\
	  (\alpha,\beta) & \longmapsto & \phi((\alpha,\beta)) 
  \end{array}
\end{equation*}	
the \textit{Serre pairing}.   }
\end{notation}

Now we are ready to state the following proposition in which we prove that the composition of the maps $S^k\QQ\longrightarrow F$ and $F\longrightarrow S^k\QQ$ corresponds to the identity or a multiple of the identity.

\begin{prop}\label{prop}
Fix $k\in\{1,\ldots,n-2\}$ and consider $\alpha$ and $\beta$ non zero elements such that the image of the Serre pairing:
\begin{equation*}
 \begin{array}{rcl}
	 H^{n-1}(F\otimes S^{n-k-1}\QQ(-n))\times H^{n-1}(F\dual\otimes
	  S^{n-k-1}\QQ\dual(-1)) & \stackrel{\phi}{\longrightarrow} & H^{2n-2}(\OO(-n-1)) \\
	  (\alpha,\beta) & \longmapsto & \phi((\alpha,\beta)) 
  \end{array}
\end{equation*}	
is non zero. Suppose that the following conditions hold:
$$H^j_*(F\otimes S^i\QQ)=0\quad\text{with}\quad (i,j)\in A_k\cup B_k\cup C_k\cup D_k\cup (n-k-2,n-1)$$
Then we can lift $\alpha$ to $\alpha'\in H^0(F\otimes S^{k}\QQ\dual)$ and $\beta$ to $\beta'\in H^0(F\dual\otimes S^{k}\QQ)$ by the natural maps,
\begin{equation*}
 \begin{array}{rcl}
	 H^0(F\otimes S^{k}\QQ\dual) & \stackrel{\psi_1}{\longrightarrow} & H^{n-1}(F\otimes S^{n-k-1}\QQ(-n)) \\
	  \alpha' & \longmapsto & \alpha 
  \end{array}
\end{equation*}	
\begin{equation*}
 \begin{array}{rcl}
	 H^0(F\dual\otimes S^{k}\QQ) & \stackrel{\psi_2}{\longrightarrow} & H^{n-1}(F\dual\otimes S^{n-k-1}\QQ\dual(-1)) \\
	  \beta' & \longmapsto & \beta 
  \end{array}
\end{equation*}	
of Lemma \ref{lema1} and Lemma \ref{lema2},
such that, regarding $\alpha'\in Hom(S^k\QQ, F)$ and $\beta'\in Hom(F,S^k\QQ)$, their composition is a nonzero multiple of the identity map $S^k\QQ\longrightarrow S^k\QQ$.
\end{prop}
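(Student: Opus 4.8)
The plan is to combine the three ingredients already in place — the two surjections $\psi_1,\psi_2$, the simplicity of $S^k\QQ$, and the perfectness of the Serre pairing — into a single bilinear compatibility, and then to deduce the non-vanishing of the composition by specializing everything to the case $F=S^k\QQ$. First I would observe that the hypothesis $H^j_*(F\otimes S^i\QQ)=0$ for $(i,j)\in A_k\cup B_k\cup C_k\cup D_k\cup(n-k-2,n-1)$ is precisely the union of the conditions required in Lemma \ref{lema1} (namely $B_k\cup D_k\cup(n-k-2,n-1)$) and in Lemma \ref{lema2} (namely $A_k\cup C_k\cup(n-k-2,n-1)$, once these are rewritten via Serre duality as in the discussion preceding that lemma). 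Hence $\psi_1$ and $\psi_2$ are surjective, so, since $\alpha,\beta\neq 0$, we may choose $\alpha'\in H^0(F\otimes S^k\QQ\dual)$ with $\psi_1(\alpha')=\alpha$ and $\beta'\in H^0(F\dual\otimes S^k\QQ)$ with $\psi_2(\beta')=\beta$. Using $(S^k\QQ)\dual\cong S^k\QQ\dual$ (Remark \ref{isomorfismosentrefibradosuniversales}) we regard $\alpha'\in Hom(S^k\QQ,F)$ and $\beta'\in Hom(F,S^k\QQ)$. By Proposition \ref{es simple}, $Hom(S^k\QQ,S^k\QQ)=\K$, so $\beta'\circ\alpha'=\lambda\cdot\mathrm{id}_{S^k\QQ}$ for some scalar $\lambda$, and the whole statement reduces to showing $\lambda\neq 0$.

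The central step is a naturality/compatibility identity, to be extracted from the constructions of Lemmas \ref{lema1} and \ref{lema2}. Each $\psi_i$ is obtained by iterating the connecting homomorphisms of an Eagon--Northcott complex tensored with $F$ (resp.\ $F\dual$); such an iteration is the Yoneda product with the class of that complex, hence $\psi_1,\psi_2$ are natural in $F$. Concretely, for a morphism $a\colon S^k\QQ\to F$ one gets $\psi_1(a)=(a\otimes\mathrm{id})_*\,e_1$ with $e_1:=\psi_1^{S^k\QQ}(\mathrm{id})\in H^{n-1}(S^k\QQ\otimes S^{n-k-1}\QQ(-n))$, and dually $\psi_2(b)=(b\dual\otimes\mathrm{id})_*\,e_2$ with $e_2:=\psi_2^{S^k\QQ}(\mathrm{id})\in H^{n-1}(S^k\QQ\dual\otimes S^{n-k-1}\QQ\dual(-1))$. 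Feeding these into the Serre pairing, using functoriality of the cup product together with the elementary identity $\mathrm{tr}_F\circ(a\otimes b\dual)=\mathrm{tr}_{S^k\QQ}\circ\big((b\circ a)\otimes\mathrm{id}\big)$ on $S^k\QQ\otimes(S^k\QQ)\dual$, and contracting $F\otimes F\dual\to\OO$ and $S^{n-k-1}\QQ(-n)\otimes S^{n-k-1}\QQ\dual(-1)\to\OO(-n-1)$, one obtains
\[
\phi\big(\psi_1(\alpha'),\psi_2(\beta')\big)=\lambda\cdot\phi^{S^k\QQ}(e_1,e_2),
\]
where $\phi^{S^k\QQ}$ is the Serre pairing for $F=S^k\QQ$. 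This is the same commutative-diagram argument as in the proof of Theorem \ref{split theorem}, with $S^{n-1}\QQ$ replaced by $S^{n-k-1}\QQ$ and the corresponding twists; it is exactly here that all the vanishings coming from $A_k,B_k,C_k,D_k$ and the point $(n-k-2,n-1)$ are used, so that the relevant intermediate cohomology vanishes and the diagram of pairings commutes.

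To finish I would show $\phi^{S^k\QQ}(e_1,e_2)\neq 0$. For $F=S^k\QQ$ the hypotheses of Lemmas \ref{lema1} and \ref{lema2} all hold by Lemma \ref{cohomologia para la demostracion}(ii)--(iii): every point of $A_k,B_k,C_k,D_k,(n-k-2,n-1)$ lies in the range $1\le j\le 2n-3$, and the value $i=n-k-1$ occurs only at points with $j\neq n-1$. Hence $\psi_1^{S^k\QQ}$ and $\psi_2^{S^k\QQ}$ are surjective. Their sources equal $Hom(S^k\QQ,S^k\QQ)=\K$ (Proposition \ref{es simple}), and their targets are $H^{n-1}(S^k\QQ\otimes S^{n-k-1}\QQ(-n))\cong\K$ (Lemma \ref{cohomologia para la demostracion}(i)) and, by Serre duality, $H^{n-1}(S^k\QQ\dual\otimes S^{n-k-1}\QQ\dual(-1))\cong\K$. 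Therefore $\psi_1^{S^k\QQ}$ and $\psi_2^{S^k\QQ}$ are isomorphisms, so $e_1,e_2\neq 0$; and since the Serre pairing is perfect, its restriction $\phi^{S^k\QQ}\colon\K\times\K\to H^{2n-2}(\OO(-n-1))\cong\K$ is non-degenerate, whence $\phi^{S^k\QQ}(e_1,e_2)\neq 0$. Plugging this and the hypothesis $\phi(\alpha,\beta)\neq 0$ into the displayed identity yields $\lambda\neq 0$, which is what we wanted.

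The hard part will be the displayed compatibility in the second step: identifying $\psi_1,\psi_2$ with Yoneda products with fixed Ext-classes and checking that, after applying the trace $F\otimes F\dual\to\OO$, the Serre pairing of $\psi_1(\alpha')$ with $\psi_2(\beta')$ really depends on $\alpha',\beta'$ only through $\beta'\circ\alpha'$ — i.e.\ that the large diagram of pairings, entirely analogous to the one in the proof of Theorem \ref{split theorem}, commutes, with all twists matched correctly (in particular $S^{n-k-1}\QQ(-n)\otimes S^{n-k-1}\QQ\dual(-1)$ mapping to $\OO(-n-1)=\omega_{\G(1,n)}$). Everything else is either formal multilinear algebra (functoriality of cup products, the trace identity, simplicity) or a direct appeal to the cohomology computations of Section~2.
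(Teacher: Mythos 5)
Your proof is correct, and it reaches the conclusion by a route that differs from the paper's in its key mechanism. The paper also lifts $\alpha,\beta$ via the surjectivity of $\psi_1,\psi_2$ and invokes the simplicity of $S^k\QQ$, but then it identifies $H^{2n-2}(\OO(-n-1))$ with $Hom(S^k\QQ,S^k\QQ)$ explicitly: it re-runs the Eagon--Northcott constructions with coefficients $S^{n-k-1}\QQ\dual(-1)$ and $S^k\QQ\dual$ in place of $F$ and $F\dual$, obtaining two maps $\psi_3,\psi_4$ which are shown to be isomorphisms using the specific vanishings of Lemmas \ref{cohom para lema 1} and \ref{cohom para lema 3}, and then concludes from a three-row commutative diagram of pairings that $\beta'\circ\alpha'\neq 0$. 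You bypass $\psi_3,\psi_4$ (and hence those two lemmas) altogether: you exploit the naturality of $\psi_1,\psi_2$ in the coefficient bundle (Yoneda product with a fixed extension class), the trace identity $\mathrm{tr}_F\circ(a\otimes b\dual)=\mathrm{tr}_{S^k\QQ}\circ((b\circ a)\otimes\mathrm{id})$, and functoriality of cup product to reduce everything to the universal case $F=S^k\QQ$, where Lemma \ref{cohomologia para la demostracion} and simplicity give the one-dimensionality you need. What each approach buys: yours makes fully explicit the compatibility of the Serre pairing with $\psi_1,\psi_2$ (which the paper's diagram asserts but leaves largely implicit) and needs fewer ad hoc cohomology computations; the paper's version, by producing the isomorphisms $\psi_3,\psi_4$, gives a concrete identification of $H^{2n-2}(\OO(-n-1))$ with $Hom(S^k\QQ,S^k\QQ)$ that mirrors the structure of Theorem \ref{split theorem} and makes the composition pairing itself visible as the bottom row. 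Two small remarks: your final paragraph (showing $\phi^{S^k\QQ}(e_1,e_2)\neq 0$) is logically redundant, since the identity $\phi(\alpha,\beta)=\lambda\cdot\phi^{S^k\QQ}(e_1,e_2)$ with $\phi(\alpha,\beta)\neq 0$ already forces both factors to be nonzero, though it is a useful consistency check; and your verification that $S^k\QQ$ itself satisfies the hypotheses (the points of $A_k,B_k,C_k,D_k$ and $(n-k-2,n-1)$ have $1\le j\le 2n-3$ and meet $i=n-k-1$ only with $j\neq n-1$, using $k\le n-2$) is exactly the place where the restriction on $k$ enters, so keep it stated as carefully as you did.
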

\begin{proof}
By Lemma \ref{lema1} and Lemma \ref{lema2} we can lift $\alpha$ to $\alpha'$ by $\psi_1$ and by $\psi_2$ we can lift $\beta$ to $\beta'$. We would like also to lift $\phi$ to the composition map $Hom(S^k\QQ,F)\times Hom(F,S^k\QQ)\longrightarrow Hom(S^k\QQ,S^k\QQ)$. So, let us identify $H^{2n-2}(\OO(-n-1))$ with $Hom(S^k\QQ,S^k\QQ)$. We do this
by using the same complexes we used for the maps $\psi_1$ and $\psi_2$ since $k\leq n-2$.

If we tensorize now the complex (\ref{ea1}) by  $S^{n-k-1}\QQ\dual(-1)$ instead of by $F$ and repeat the same argument as in Lemma \ref{lema1}, but starting from $2n-2$ as the higher order of the cohomology, we obtain that the following map is an isomorphism:
$$H^{n-1}(S^k\QQ\dual\otimes S^{n-k-1}\QQ\dual(-1)) \xrightarrow[\psi_3]{\simeq}H^{2n-2}(S^{n-k-1}\QQ(-n)\otimes S^{n-k-1}\QQ\dual(-1))$$
The cohomology vanishings we need are the ones given in Lemma \ref{cohom para lema 1}.

Now let us consider the complex (\ref{ea2}) tensorized by $S^k\QQ\dual$
instead of by $F\dual$ from Lemma \ref{lema2}. This time we start from the cohomology of order $n-1$ and obtain that the following map is an isomorphism:
$$H^{0}(S^k\QQ\dual\otimes S^{k}\QQ)\xrightarrow[\psi_4]{\simeq} H^{n-1}(S^k\QQ\dual\otimes S^{n-k-1}\QQ\dual(-1))$$
As before, the cohomology vanishings we need are the ones given in Lemma \ref{cohom para lema 3}.

Putting all together we get the following commutative diagram:
$$H^{n-1}(F\otimes S^{n-k-1}\QQ(-n))\times H^{n-1}(F\dual\otimes S^{n-k-1}\QQ\dual(-1))  \stackrel{\phi}{\longrightarrow} H^{2n-2}(S^{n-k-1}\QQ(-n)\otimes S^{n-k-1}\QQ\dual(-1))$$
$$\uparrow\psi_1\times id\qquad\qquad\qquad\qquad\qquad\qquad\qquad\circlearrowleft\qquad\psi_3\uparrow\,\simeq\qquad\qquad$$
$$\qquad H^{0}(F\otimes S^{k}\QQ\dual)\times H^{n-1}(F\dual\otimes S^{n-k-1}\QQ\dual(-1))  \longrightarrow H^{n-1}(S^k\QQ\dual\otimes S^{n-k-1}\QQ\dual(-1))$$
$$\qquad\qquad\qquad id\times\psi_2\uparrow\qquad\qquad\circlearrowleft\qquad\psi_4\uparrow\,\simeq$$
$$ \qquad H^{0}(F\otimes S^{k}\QQ\dual)\times H^{0}(F\dual\otimes S^{k}\QQ)  \stackrel{\phi'}{\longrightarrow} H^0(S^k\QQ\dual\otimes S^k\QQ)$$
where $\psi_1$ and $\psi_2$ are surjective maps and $\psi_3$ and $\psi_4$ are isomorphisms.

Hence, the  composition of the morphisms $\alpha'\in Hom(S^k\QQ,F)$ and $\beta'\in Hom(F\dual\otimes S^k\QQ)$ is a non zero map $S^k\QQ\longrightarrow S^k\QQ$. Since $S^k\QQ$ is simple (see Proposition \ref{es simple}) this composition is necessarily a multiple of the identity. 
\end{proof}

\begin{remark}{\rm
We can identify the hypothesis of Proposition \ref{prop} with the points $(i,j)$ in Figure \ref{fig:definicion de rectas}.   }
\end{remark}

We prove now the main result.
\begin{thm}\label{main theorem}
Fix  $k\in\{0,1,\ldots,n-2\}$ and let $F$ be a vector bundle over the Grassmannian of lines $\G(1,n)$.
Then $F$ is a direct sum of twists of $\OO, \QQ, S^2\QQ,\ldots, S^k\QQ$
 if and only if the following conditions hold:
 
 $$H^j_*(F\otimes S^i\QQ)=0\quad\text{for}\quad (i,j)\in
 \left\{
\begin{array}{l}
A_0\cup A_1\cup \ldots\cup A_k\\
B_0\cup B_1\cup\ldots\cup B_k\\
C_1\cup C_2\cup\ldots\cup C_k\\
D_1\cup D_2\cup\ldots\cup D_k\\
(n-k-2,n-1)
\end{array}\right.$$
\end{thm}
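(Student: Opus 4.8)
The plan is to prove this by double induction: the outer induction is on $k$ (the case $k=0$ being exactly Theorem \ref{teorema de AM}, since for $k=0$ the only segments present are $A_0$ and $B_0$ and all the added segments $C_k, D_k$ and the point $(n-k-2,n-1)$ are empty or absent), and the inner induction is on $\operatorname{rank}(F)$. So assume the result holds for $k-1$, and let $F$ satisfy all the listed vanishings for the given $k$. First I would check the ``only if'' direction: if $F$ is a direct sum of twists of $\OO,\QQ,\ldots,S^k\QQ$, then each summand $S^m\QQ(l)$ with $0\le m\le k$ must satisfy $H^j_*(S^m\QQ\otimes S^i\QQ)=0$ for all $(i,j)$ in the prescribed set; this is a finite check using Lemma \ref{cohomologia para la demostracion} (parts (ii),(iii)) together with Proposition \ref{producto tensorial de productos simetricos}, observing that the only point of the diagram where $S^m\QQ\otimes S^i\QQ$ can have cohomology, namely $(n-m-1,n-1)$, is deliberately excluded from all of $A_\bullet, B_\bullet, C_\bullet, D_\bullet$ and from $(n-k-2,n-1)$ precisely when $m\le k$.

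For the ``if'' direction, the key dichotomy is whether $H^{n-1}_*(F\otimes S^{n-k-1}\QQ)=0$ or not. If it does vanish, then $F$ satisfies all the hypotheses of the characterization at level $k-1$ (the segments for $k-1$ are contained in those for $k$, as noted in the remark preceding Figure \ref{fig:caso k}, and the extra point $(n-k-1,n-1)$ needed at level $k-1$ is now being assumed), so by the outer induction hypothesis $F$ is already a direct sum of twists of $\OO,\ldots,S^{k-1}\QQ$, a fortiori of the desired form. If instead $H^{n-1}_*(F\otimes S^{n-k-1}\QQ)\ne 0$, I twist $F$ so that $H^{n-1}(F\otimes S^{n-k-1}\QQ(-n))\ne 0$; pick a nonzero $\alpha$ in this group. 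By Serre duality $H^{n-1}(F\dual\otimes S^{n-k-1}\QQ\dual(-1))\ne 0$ as well and the Serre pairing $\phi$ is perfect, so there is a nonzero $\beta$ in the dual group with $\phi(\alpha,\beta)\ne 0$. Now apply Proposition \ref{prop}: using exactly the vanishings on $A_k\cup B_k\cup C_k\cup D_k\cup(n-k-2,n-1)$ that we are assuming, $\alpha$ and $\beta$ lift via the surjections $\psi_1,\psi_2$ of Lemmas \ref{lema1} and \ref{lema2} to $\alpha'\in\operatorname{Hom}(S^k\QQ,F)$ and $\beta'\in\operatorname{Hom}(F,S^k\QQ)$ whose composite $S^k\QQ\to S^k\QQ$ is a nonzero multiple of the identity. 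Rescaling, $\beta'\circ\alpha'=\operatorname{id}_{S^k\QQ}$, so $S^k\QQ$ splits off as a direct summand: $F\cong S^k\QQ\oplus F'$.

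It remains to run the inner induction: $F'$ has smaller rank, so it suffices to check that $F'$ again satisfies all the hypotheses. Since $F=S^k\QQ\oplus F'$ and (by the ``only if'' direction already proved, or directly by the cohomology lemmas) $S^k\QQ$ satisfies all the listed vanishings, additivity of cohomology under direct sums gives $H^j_*(F'\otimes S^i\QQ)=H^j_*(F\otimes S^i\QQ)\ominus H^j_*(S^k\QQ\otimes S^i\QQ)=0$ for every $(i,j)$ in the prescribed set. Hence the induction applies to $F'$, and concatenating the splittings yields that $F$ is a direct sum of twists of $\OO,\QQ,\ldots,S^k\QQ$, completing the proof.

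\textbf{Main obstacle.} The delicate point is not the formal induction but verifying that the hypotheses are exactly matched to what Proposition \ref{prop}, Lemma \ref{lema1} and Lemma \ref{lema2} consume: one must check that after twisting $F$ so that $H^{n-1}(F\otimes S^{n-k-1}\QQ(-n))\ne 0$, all the cohomology groups appearing as obstructions in the long Eagon--Northcott chains of Lemmas \ref{lema1}--\ref{lema2} correspond precisely to lattice points on $A_k, B_k, C_k, D_k$ and the point $(n-k-2,n-1)$ (after translating the $F\dual$-conditions into $F$-conditions by Serre duality, as done in the discussion before Lemma \ref{lema2}), and that no point outside this set --- in particular the ``forbidden'' point $(n-k-1,n-1)$ where $S^k\QQ$ itself has cohomology --- is ever needed. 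Equivalently, one must confirm that the bookkeeping of the segments in Definition \ref{definicion rectas} is consistent across the step from $k-1$ to $k$, which is exactly the content of the remark preceding Figure \ref{fig:caso k}.
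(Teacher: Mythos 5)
Your proposal is correct and follows essentially the same route as the paper: outer induction on $k$ with Theorem \ref{teorema de AM} as the base case, the dichotomy on the vanishing of $H^{n-1}_*(F\otimes S^{n-k-1}\QQ)$, and the use of the Serre pairing together with Proposition \ref{prop} to split off $S^k\QQ$. The only difference is cosmetic: the paper runs the inner induction on $m=\sum_l h^{n-1}(F\otimes S^{n-k-1}\QQ(l))$ (using that $h^{n-1}(S^k\QQ\otimes S^{n-k-1}\QQ(-n))=1$ drops $m$ by one) rather than on $\operatorname{rank}(F)$, and both parameters work.
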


\begin{proof}
We use double induction. First we make induction on $k$, the case $k=0$ being Theorem \ref{teorema de AM}.
We suppose now the theorem true for $k-1$ and we want to prove it for $k$.
Since $k\leq n-2$ we can now apply induction on $m:=\sum_l h^{n-1} (F\otimes S^{n-k-1}\QQ(l))$.
When $m=0$ we are in the hypothesis of the theorem when replacing $k$ with $k-1$. Hence by induction hypothesis, $F$ can be expressed as the direct sum of twist of $\OO,\,\QQ,\,S^2\QQ,\,\ldots,S^{k-1}\QQ$.

Assume now $m>0$ and that we know the result for $m-1$. In particular, $H^{n-1}(F\otimes S^{n-k-1}\QQ(l))\neq 0$ for some $l$. We want to show that  $S^k\QQ(-n-l)$ is a direct summand of $F$. 
Since the result is independent on the twist, we can assume $l=-n$. We take a non zero element $\alpha\in H^{n-1}(F\otimes S^{n-k-1}\QQ(-n))$. Since
 the Serre pairing:
\begin{equation*}
 \begin{array}{rcl}
	 H^{n-1}(F\otimes S^{n-k-1}\QQ(-n))\times H^{n-1}(F\dual\otimes
	  S^{n-k-1}\QQ\dual(-1)) & \stackrel{\phi}{\longrightarrow} & H^{2n-2}(\OO(-n-1)) 
  \end{array}
\end{equation*}	
is perfect we can take $\beta \in  H^{n-1}(F\dual\otimes
S^{n-k-1}\QQ\dual(-1))$ such that $\phi(\alpha,\beta)\neq 0$.

Since $k\leq n-2$, by Proposition \ref{prop} we can lift $\alpha$ to $\alpha'\in Hom(S^k\QQ)$ and $\beta$ to $\beta'\in Hom(F,S^k\QQ)$
such that the composition of $\alpha'$
and $\beta'$ is a nonzero multiple of the identity map $S^k\QQ\longrightarrow S^k\QQ$.
Thus we can write $F=S^k\QQ\oplus F'$ for some $F'$. 
Clearly $F'$ satisfies all the hypothesis of the theorem (since $F$ does). Moreover, since $h^{n-1}(S^k\QQ\otimes S^{n-k-1}\QQ(-n))=1$ by Lemma \ref{cohomologia para la demostracion}  we have $h^{n-1}(F'\otimes S^{n-k-1}\QQ(-n))=m-1$.

By induction hypothesis $F'$ can be expressed as direct sums of twists of $\OO,\QQ,S^2\QQ,\ldots,S^k\QQ$ and hence the same holds for $F$ as we wanted.
\end{proof}

\section{Final remarks}
\subsection{Comparison with derived categories}
As A. Kuznetsov suggested, a natural way to obtain cohomological characterization of vector bundles is the use of derived categories (see \cite{Kuz}). The main tool that uses this technique is the resolution of the diagonal. For the case of $\G(k,n)$ this resolution is:
\begin{equation}\label{resolucion kapranov}
0\rightarrow\bigwedge^{(k+1)(n-k)}(\QQ\dual\boxtimes\s\dual)\rightarrow\ldots \bigwedge^2(\QQ\dual\boxtimes\s\dual)\rightarrow \QQ\dual\boxtimes\s\dual\rightarrow\OO_{X\times X}\rightarrow\OO_\Delta\rightarrow 0
\end{equation}

\begin{remark}\label{ultimo remark}{\rm
The main difference now is that the wedge products of $\QQ\dual\boxtimes\s\dual$ are much more complicated and decompose into many pieces. Specifically:
$$\bigwedge^k(\QQ\dual\boxtimes\s\dual)=\bigoplus_{|\lambda|=k} \Sc_\lambda\QQ\dual\otimes\Sc_{\lambda'}\s\dual$$ where the sum goes over all Young tableau with $k$ cells, $\lambda'$ is the conjugate Young tableau and $\Sc_\lambda$ is the Schur functor associated to the tableau $\lambda$.  }
\end{remark}

By applying Beilinson's theorem we obtain the same characterization given in Theorem \ref{split theorem} and Theorem \ref{teorema de AM} but with different conditions. In order to state these statements we must give some definitions. For more details see Section 4.3 of \cite{Toc}.

\begin{defn} {\rm
Consider the following set of points:
\begin{itemize}
	\item $M_0=\{(0,2n-3),(1,2n-4),(2,2n-5),(3,2n-6),\ldots,(n-5,n+2),(n-4,n+1),(n-3,n)\}$
	\item $M'=\{(0,2n-4),(1,2n-5),(2,2n-6),(3,2n-7),\ldots, (n-4,n),(n-3,n-1),(n-2,n-2)\}$
	\item $M_k=\{(0,2n-3-2k),(1,(2n-3-2k)-1),(2,(2n-3-2k)-2),(3,(2n-3-2k)-3),\ldots, (n-2-2k,n-1),(n-1-2k,n-2)\}$
	\item $N_0=\{(2,1),(3,2),(4,3),(5,4),\ldots, (n-4,n-5),(n-3,n-4),(n-2,n-3)\}$
	\item $N_k=\{(0,1+2(k-1)),(1,(1+2(k-1))+1),(2,(1+2(k-1))+2),(3,(1+2(k-1))+3),\ldots,(n-4-2(k-1),n-3),(n-3-2(k-1),n-2)\}$
\end{itemize}
for $k\in\{1,2,\ldots,[\frac{n-1}{2}]\}$.   }
\end{defn}

\begin{thm}\label{split}
Let $\F$ be a coherent sheaf over $\G(1,n)$. Suppose that for some $t\in \Z$ we have the following vanishings:
$$H^j(\F\otimes S^i\QQ\dual(t-\frac{j-i+1}{2}))=0\quad\text{for}\quad (i,j)\in
 \left\{
\begin{array}{l}
(1,0)\\
M_0\cup M'\cup M_1\cup \ldots\cup M_{[\frac{n-1}{2}]}\\
N_0\cup N_1\cup\ldots\cup N_{[\frac{n-1}{2}]}
\end{array}\right.$$
Then $\F$ contains $\OO(t)\otimes H^0(\F(t))$
as a direct summand.
\end{thm}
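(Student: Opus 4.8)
The plan is to run the derived-categories counterpart of the proof of Theorem \ref{split theorem}. There the engine was a single glued Eagon--Northcott complex playing the role of a concrete generator of $H^{2n-2}(\omega_{\G(1,n)})$; here the engine will be the resolution of the diagonal (\ref{resolucion kapranov}) together with the Beilinson spectral sequence attached to it. Since, exactly as in Theorem \ref{split theorem}, this is a ``peel off one summand'' statement, I would argue by induction on $\dim H^0(\F(t))$: the case $0$ is trivial (then $\OO(t)\otimes H^0(\F(t))=0$), and in the inductive step it suffices to exhibit one copy of $\OO(t)$ as a direct summand of $\F$ and check that the complement again satisfies the hypotheses with $\dim H^0$ dropped by one.

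First I would specialize (\ref{resolucion kapranov}) to $X=\G(1,n)$, where $(k+1)(n-k)=2(n-1)$, so the resolution has length $2n-2$. Decomposing each exterior power via Remark \ref{ultimo remark}, and using that $\QQ\dual$ has rank $2$ together with the identity $\Sc_{(a,b)}W\cong S^{a-b}W\otimes(\det W)^{\otimes b}$ and Remark \ref{isomorfismosentrefibradosuniversales}, each $\Sc_{(a,b)}\QQ\dual$ becomes $S^{a-b}\QQ\dual(-b)$; hence the term of homological degree $m$ splits into summands $S^{i}\QQ\dual(-b)\boxtimes\Sc_{(a,b)'}\s\dual$ with $a+b=m$ and $i=a-b$. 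Tensoring this resolution, suitably twisted by $\OO(t)$ on one of the two factors, with the pull-back of $\F$ and pushing forward to $X$ produces the Beilinson spectral sequence, whose $E_1$ terms are direct sums of $H^{j}\!\bigl(\F\otimes S^{i}\QQ\dual(t-\tfrac{j-i+1}{2})\bigr)\otimes\Sc_{(a,b)'}\s\dual$ (the half-integer shifts being absorbed through the $(\det\QQ\dual)^{\otimes b}=\OO(-b)$ factors of Remark \ref{isomorfismosentrefibradosuniversales}, as carried out in Section 4.3 of \cite{Toc}), and which converges to $\F$, up to an overall twist, concentrated in a single cohomological degree, the trivial-partition term being the one that will produce the summand $\OO(t)\otimes H^0(\F(t))$ of the conclusion.

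Next I would chase cohomology through the short exact sequences into which the resolution splits, exactly as the maps $\psi_1$ and $\psi_2$ were constructed in Theorem \ref{split theorem}. The vanishings gathered in $M_0$, $M'$ and $M_1,\ldots,M_{[\frac{n-1}{2}]}$ should be precisely those making the ``upper'' part of the $E_1$ page die, hence producing a natural injection of $H^0(\F(t))$ into the extremal cohomology group sitting at the end of the resolution; dually, the vanishings in $N_0$ and $N_1,\ldots,N_{[\frac{n-1}{2}]}$, transported to $\F\dual$ by Serre duality as in Theorem \ref{split theorem}, should make the ``lower'' part die and yield a surjection onto the Serre-dual group; and the isolated hypothesis at $(1,0)$, namely $H^0(\F\otimes\QQ\dual(t))=0$, should be exactly what kills the one differential connecting the corner term $\OO(t)\otimes H^0(\F(t))$ to its neighbour in the resulting monad. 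Then, just as in Theorem \ref{split theorem}, the Serre pairing on these two extremal groups is perfect, and a commutative diagram identifies it with the composition pairing $\mathrm{Hom}(\OO(t),\F)\times\mathrm{Hom}(\F,\OO(t))\to\mathrm{Hom}(\OO(t),\OO(t))=\K$, the vertical arrows being the injection, the surjection and the isomorphisms coming from the collapse of the spectral sequence on this skeleton; so a nonzero $s\in H^0(\F(t))$ acquires a partner $s^\vee$ composing with it to a nonzero scalar of $\mathrm{id}_{\OO(t)}$. Hence $\F\cong\OO(t)\oplus\F'$, where $\F'$ again satisfies the hypotheses and $h^0(\F'(t))=h^0(\F(t))-1$, and the induction closes.

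The step I expect to be the main obstacle is the combinatorial bookkeeping of the previous paragraph: one must verify that the points listed in $M_0,M',M_1,\ldots$ and $N_0,N_1,\ldots$ are exactly the $E_1$ positions whose vanishing collapses the Beilinson page onto two strands plus the corner, and --- a harder point than in Theorem \ref{split theorem}, where only the exterior powers $\bigwedge^{j}\s$ appeared --- that the genuine Schur summands $\Sc_{(a,b)'}\s\dual$ of the rank $n-1$ bundle $\s\dual$ occurring in (\ref{resolucion kapranov}) never reintroduce unwanted cohomology during the chase; threading the half-integer twists $t-\tfrac{j-i+1}{2}$ correctly through all of this, and matching the normalization to the form demanded by the statement, is what gives the hypotheses their somewhat unusual shape.
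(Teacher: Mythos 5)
The first half of your outline (Kapranov's resolution of the diagonal for $\G(1,n)$, the decomposition of its exterior powers into $S^{i}\QQ\dual(-b)\boxtimes\Sc_{\lambda'}\s\dual$, and the resulting Beilinson spectral sequence) is exactly the paper's route: Theorem \ref{split} is obtained by applying Beilinson's theorem, with the details deferred to Theorem 4.3.6 of \cite{Toc}. The genuine gap is in your second half, where you graft the Serre-pairing mechanism of Theorem \ref{split theorem} onto this setup. That mechanism lives on $\G(1,n)$ itself: it needs an explicit length-$(2n-2)$ extension (the glued Eagon--Northcott complex) representing a generator of $Ext^{2n-2}(\OO,\OO(-n-1))=H^{2n-2}(\omega_{\G(1,n)})$, and it is the successive connecting maps of \emph{that} extension which define $\psi_1$, $\psi_2$ and make the square against the Serre pairing commute. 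Kapranov's resolution is a complex on $X\times X$; tensoring with the pull-back of $\F$ and pushing forward yields a spectral sequence, not a Yoneda extension on $\G(1,n)$ generating $H^{2n-2}(\OO(-n-1))$. So the step ``chase the short exact sequences as for $\psi_1,\psi_2$'' and the asserted commutative diagram identifying the Serre pairing with the composition pairing have no construction behind them --- producing such a ``Serre extension'' is precisely the nontrivial issue the paper isolates in Section 5.2. In addition, rerouting the $N$-conditions to $\F\dual$ by Serre duality is not legitimate here: $\F$ is only coherent, so $\F\dual$ and naive Serre duality are unavailable, and in the Beilinson $E_1$ page the $N$-points are vanishings for $\F$ itself (the strands below the diagonal), not dual conditions; your twist bookkeeping $t-\frac{j-i+1}{2}$ indexed by the cohomological degree rather than by the partition is a symptom of the same conflation.

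What the hypotheses actually accomplish, and what the intended proof does, is purely monadic and needs neither the pairing nor your induction on $h^0(\F(t))$: the vanishings along $M_0$, $M'$, $M_1,\ldots,M_{[\frac{n-1}{2}]}$ and $N_0,\ldots,N_{[\frac{n-1}{2}]}$ kill every $E_1$ entry of the (suitably twisted) Beilinson spectral sequence on the strands that could interact with the corner term $H^0(\F(t))\otimes\OO(t)$, while the isolated condition at $(1,0)$, namely $H^0(\F\otimes\QQ\dual(t))=0$, kills the only possibly nonzero differential leaving that corner (this is exactly the ``important role'' the paper attributes to it). Since the spectral sequence converges to $\F$ concentrated in one degree, the corner term then splits off at once as the direct summand $\OO(t)\otimes H^0(\F(t))$. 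If you want to keep your hybrid route instead, you must first actually construct the Serre extension on $\G(1,n)$ and the commutative diagram you assert; as written, that is the missing core of the argument rather than a bookkeeping obstacle.
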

\begin{proof}
See Theorem 4.3.6 of \cite{Toc}.
\end{proof}

Hence, we can state the splitting criterion for vector bundles over $\G(1,n)$ (notice again that we could have given the result for coherent sheaf by adding the concept of g.skyscraper sheaf).
\begin{thm}\label{splitting criteria final}
Let $F$ be a vector bundle over $\G(1,n)$. Suppose that:
$$H^j_*(F\otimes S^i\QQ)=0\quad\text{for}\quad (i,j)\in
 \left\{
\begin{array}{l}
M_0\cup M'\cup M_1\cup \ldots\cup M_{[\frac{n-1}{2}]}\\
N_0\cup N_1\cup\ldots\cup N_{[\frac{n-1}{2}]}
\end{array}\right.$$
Then $F$ is a direct sum of line bundles.
\end{thm}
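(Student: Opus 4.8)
The plan is to derive Theorem~\ref{splitting criteria final} from Theorem~\ref{split} in exactly the same way that Theorem~\ref{teorema de AM} is obtained from Theorem~\ref{split theorem}, namely by combining the coherent-sheaf version with an induction that strips off line-bundle summands one at a time. First I would observe that the hypothesis of Theorem~\ref{splitting criteria final} is phrased in terms of the ``starred'' cohomology $H^j_*(F\otimes S^i\QQ)=0$, which by definition means $H^j(F\otimes S^i\QQ(t))=0$ for \emph{all} $t\in\Z$; using the isomorphism $S^i\QQ\dual\simeq (S^i\QQ)(-i)$ from Remark~\ref{isomorfismosentrefibradosuniversales}, each such vanishing is equivalent to $H^j(F\otimes S^i\QQ\dual(t'))=0$ for all $t'$, so in particular the twisted vanishings $H^j(F\otimes S^i\QQ\dual(t-\tfrac{j-i+1}{2}))=0$ demanded in Theorem~\ref{split} hold for every $t$ (when the shift is a half-integer one simply has no such cohomology group and the condition is vacuous, or one works on the relevant étale/locally free model; in any case the integrally twisted version is what is really used). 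Thus the hypotheses of Theorem~\ref{split} are satisfied by $\F=F$ for every choice of $t\in\Z$, except possibly for the single extra condition $(i,j)=(1,0)$, i.e. $H^0(F\otimes \QQ\dual(t))=H^0(F(t-1)\otimes\QQ\dual)$; but this group is allowed to be nonzero — it is precisely the space $H^0(\F(t))$ that produces the $\OO(t)$ summand in the conclusion of Theorem~\ref{split}.

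Next I would run the induction on the rank of $F$. If $H^j_*(F)=0$ for all relevant $(i,j)$ including being forced to have $H^0(F(t)\otimes\QQ\dual)=0$ for every $t$, one needs a base case: when $F\otimes\QQ\dual$ has no sections in any twist, together with the other vanishings one shows (again via the resolution of the diagonal, or by invoking Theorem~\ref{teorema de AM} whose hypotheses are then a subset of ours) that $F$ is already a direct sum of line bundles — in the extreme case $F=0$. Otherwise, pick $t\in\Z$ with $H^0(F(t)\otimes\QQ\dual)\neq 0$; actually it is cleaner to pick $t$ with $H^0(F(t))\neq 0$ directly, since our vanishing hypotheses in particular give the full hypothesis list of Theorem~\ref{split} for that $t$. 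Applying Theorem~\ref{split} yields $F(t)=\bigl(\OO\otimes H^0(F(t))\bigr)\oplus \F'$, hence $F=\bigl(\OO(-t)^{\oplus h^0(F(t))}\bigr)\oplus F''$ where $F''=\F'(-t)$ is again a vector bundle (a direct summand of a vector bundle) of strictly smaller rank. The segment families $M_0, M', M_k, N_0, N_k$ are closed under passing to a direct summand because all the cohomology functors involved are additive, so $F''$ inherits all the hypotheses of Theorem~\ref{splitting criteria final}. By the induction hypothesis $F''$ is a direct sum of line bundles, and therefore so is $F$.

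The one genuinely delicate point — the main obstacle — is the bookkeeping that turns the ``all twists vanish'' hypothesis into ``for every $t$, the finite list of twisted vanishings of Theorem~\ref{split} holds,'' including correctly matching the half-integer shifts $-\tfrac{j-i+1}{2}$ against the parity of the points in $M_0\cup M'\cup M_k$ and $N_0\cup N_k$. On each segment the quantity $j-i$ is constant, so the shift is a fixed (half-)integer along that segment, and the starred hypothesis kills the corresponding group for every admissible value of $t$; one must check that the union of segments used in Theorem~\ref{split} is, after these parity-dependent shifts, contained in the union used here — this is the content of the definitions of $M_k$ and $N_k$ with the ranges $k\in\{1,\dots,[\frac{n-1}{2}]\}$, and it is precisely why the segments come in these arithmetic-progression shapes. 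Modulo that combinatorial verification, which I would carry out by comparing the two displayed lists segment-by-segment exactly as in Section~4.3 of~\cite{Toc}, the deduction is a routine rank induction, and I would simply write: ``Apply Theorem~\ref{split} using recursion on the rank of $F$, exactly as in the proof of Theorem~\ref{teorema de AM}.''
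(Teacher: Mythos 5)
Your overall route --- feeding the starred vanishings into Theorem~\ref{split} for a suitable twist $t$ and splitting off $\OO(t)\otimes H^0(F(t))$ by induction on the rank --- is the route the paper intends (its own proof is simply deferred to Theorem 4.3.9 of \cite{Toc}). But there is a genuine gap at exactly the point the paper flags in the remark following the theorem. The condition at $(i,j)=(1,0)$, namely $H^0(F\otimes \QQ\dual(t))=0$, is \emph{part of the hypotheses} of Theorem~\ref{split}: it is the first entry of the displayed list, not something ``allowed to be nonzero''; and it is not the group $H^0(\F(t))$ that produces the summand --- on $\G(1,n)$ it is $H^0(F\otimes\QQ(t-1))$, a different space. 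Moreover this condition cannot be absorbed into the starred hypotheses of Theorem~\ref{splitting criteria final}: it fails for every $t\gg 0$ even when $F$ is a direct sum of line bundles, so it can only be arranged for a carefully chosen $t$ (for instance the minimal twist with $H^0(F(t))\neq 0$), and deducing it there from the remaining vanishings (say via the universal exact sequence) is precisely the nontrivial step the paper alludes to when it says the $(1,0)$ condition ``does not appear in the conditions of Theorem~\ref{splitting criteria final} but one can prove it holds with all the conditions we have in the hypotheses.'' Your proposal dismisses this verification, so Theorem~\ref{split} cannot be invoked and the rank induction never gets started; supplying that argument is the actual content of the deduction.

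Two smaller points. The parenthetical claim that the hypotheses of Theorem~\ref{teorema de AM} are a subset of yours is false: that theorem needs the diagonal vanishings $H^j_*(F\otimes S^j\QQ)=0$ for $j=1,\ldots,n-2$, i.e.\ the points $(j,j)$, and apart from $(n-2,n-2)\in M'$ none of these lie on the segments $M_0, M', M_k, N_0, N_k$; so you cannot fall back on Theorem~\ref{teorema de AM} for a base case (nor do you need to: for a vector bundle the base case is just rank zero, since $H^0(F(t))\neq 0$ for $t\gg 0$). Finally, handling the half-integral shifts $t-\frac{j-i+1}{2}$ (which occur along $M'$, where $j-i$ is even) as ``vacuous'' or via an ``\'etale model'' is not an argument; it is harmless here only because the starred hypotheses kill every integral twist, but it should be resolved as a statement about how the twists in Theorem~\ref{split} are to be read, not waved away.
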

\begin{proof}
See Theorem 4.3.9 of \cite{Toc}.
\end{proof}

Let us show graphically the conditions of Theorem \ref{splitting criteria final} in Figure \ref{fig:splitting categorias II}.

\begin{figure}[h!]
\begin{center}
\includegraphics[scale=0.52]{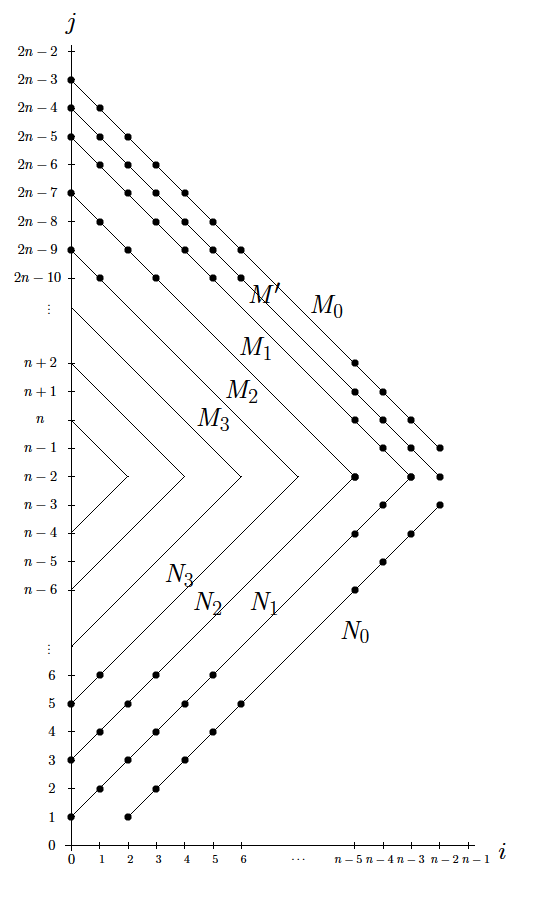}
\caption{\small Hypotheses of Theorem \ref{splitting criteria final}}\label{fig:splitting categorias II}
\end{center}
\end{figure}
\newpage

Notice that for $(i,j)=(1,0)$ the corresponding condition is $H^0(F\otimes \QQ\dual(t))=0$. This condition plays an important role since it does not appear in the conditions of Theorem \ref{splitting criteria final} but one can prove it holds with all the conditions we have in the hypotheses.

\begin{remark}{\rm
As one can observe, the splitting criterion of Theorem \ref{splitting criteria final} implies the criterion by Ottaviani (for more details see Remark 4.3.10 of \cite{Toc}). 

We can also compare graphically Theorem \ref{splitting criteria final} with the splitting criterion given by E. Arrondo and F. Malaspina (Theorem \ref{teorema de AM}).
We see that Theorem \ref{teorema de AM} only needs $2n-2$ conditions, while Theorem \ref{splitting criteria final} requires much more conditions. The reason is implicit in Remark \ref{ultimo remark}, because each of the $2n-2$ terms of the resolution of the diagonal is made of many different pieces, each of them imposing a different condition.   }
\end{remark}

\subsection{Searching a Serre extension for $\G(k,n)$}
Now we want to generalize the argument used to prove the main theorem for a general Grassmannian $\G(k,n)$. Everything works properly for the particular case of Grassmannian of lines $\G(1,n)$ since we have a nice extension (\ref{complex}). Each element of this extension does not have intermediate cohomology. Moreover, it is a nonzero element of $H^{2n-2}(\OO(-n-1))$, i.e., an element of $Ext^{2n-2}(\OO,\OO(-n-1))$ with dimension $1$ by Serre duality. This is what we call \textit{Serre extension}.

In order to give a Serre extension for $\G(k,n)$ it seems that one can use the sequences in the following proposition given by A. Fonarev in \cite{For} which are some kind of generalization of the Eagon-Northcott complex. This proposition gives the corresponding extension $Ext^{n-k}(\Sc_\lambda \QQ,\Sc_{\lambda^*}\QQ(-1))$ for a particular Young diagram $\lambda$ with $\lambda_1=n-k$.

\begin{prop}\label{generalizacion eagon} {\rm (Proposition 5.3 of \cite{For})}\
Let $\lambda\in Y_{n,k}$  be a diagram with $\lambda=(n-k,\lambda_2,\ldots,\lambda_{k+1})$ (where $Y_{n,k}$ is the set of Young diagrams inscribed in a rectangle of size $(k+1)\times(n-k)$). Consider $\lambda^*=(\lambda_2,\ldots,\lambda_{k+1})$. Then there exists the following long exact sequence for the tautological subbundle $\QQ\dual$ of rank $k+1$,
\begin{equation*}
0\longrightarrow \Sc_{\lambda^*}\QQ(-1)\longrightarrow \bigwedge^{\upsilon_{n-k}} V\otimes \Sc_{\mu_{n-k}}\QQ\longrightarrow \bigwedge^{\upsilon_{n-k-1}} V\otimes \Sc_{\mu_{n-k-1}}\QQ\longrightarrow\ldots
\end{equation*}
\begin{equation}\label{sucesion schur}
\ldots\longrightarrow
\bigwedge^{\upsilon_2} V\otimes\Sc_{\mu_2}\QQ\longrightarrow
\bigwedge^{\upsilon_1}V\otimes \Sc_{\mu_1}\QQ\longrightarrow
\Sc_\lambda\QQ\longrightarrow 0
\end{equation}
for some $0\leq \upsilon_i<n$ and $\mu_i\in Y_{n,k}$ where $\upsilon_i$ and $\mu_i$ are defined in a nice combinatorial way. 
Given $\lambda\in Y_{n,k}$ with $\lambda_1=n-k$, let us  draw a strip of width $1$ as shown:

\begin{figure}[h!]
\begin{center}
\includegraphics[scale=0.6]{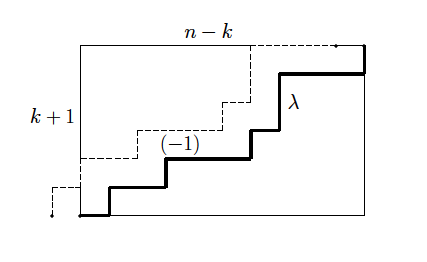}
\end{center}
\end{figure}
\newpage
Now, $\mu_i$ is pictured in the following figure by the solid line. One takes the path $\lambda$ going from left to right and
"jumps" upward on the path $\lambda'(-1)$ in the point with abscissa $n-k-i$.
\begin{figure}[h!]
\begin{center}
\includegraphics[scale=0.6]{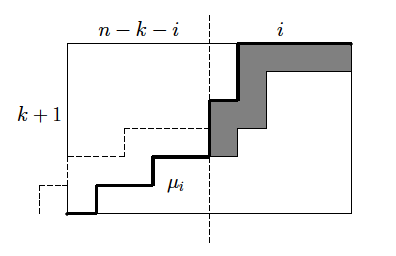}
\end{center}
\end{figure}
The number $\upsilon_i$
is the number of boxes one needs to remove from $\lambda$ in order to get $\mu_i$. These are pictured
in gray on the previous figure.
\end{prop}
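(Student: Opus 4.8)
We sketch how Proposition~\ref{generalizacion eagon} can be proved. The plan is to realise the sequence as a complex of $GL(V)$-equivariant bundles and then to reduce its exactness to a fibrewise, hence purely representation-theoretic, statement. It generalises the mechanism by which the glued Eagon--Northcott complexes $(R_j)$ of this paper are produced; indeed the case $k=1$ with $\lambda^*$ empty is exactly $(R_{n-1})$, whose extremal terms are $\bigwedge^{n-1}\s\dual\simeq\OO(-1)$ and $S^{n-1}\QQ$. All the terms $\Sc_{\mu_i}\QQ$, $\bigwedge^{\upsilon_i}V\otimes\OO$, $\Sc_\lambda\QQ$ and $\Sc_{\lambda^*}\QQ(-1)$ are homogeneous for the transitive action of $GL(V)$ on $\G(k,n)$, and the differentials are meant to be the canonical equivariant maps built at each step from the Pieri multiplication and comultiplication maps of Schur functors. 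The combinatorial recipe in the statement --- following the lattice path of $\lambda$, jumping onto $\lambda'(-1)$ at abscissa $n-k-i$, and counting the deleted boxes --- is precisely the bookkeeping of which Pieri component $\bigwedge^{\upsilon_i}V\otimes\Sc_{\mu_i}\QQ$ enters at stage $i$.

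To manufacture the terms I would iterate, in the same spirit as the construction of $(R_j)$ and $(R_j\dual)$, the symmetric- and exterior-power Koszul-type complexes attached to the universal exact sequence $0\to\s\dual\to V\otimes\OO\to\QQ\to 0$, now applied to $\Sc_\lambda$ of that sequence and spliced together according to the combinatorial recipe, peeling off a strip of boxes at each of the $n-k$ steps. After the last step the residual term is forced to be $\Sc_{\lambda^*}\QQ(-1)$, the twist $\OO(-1)$ appearing because $\det\s\dual\simeq\OO(-1)$ and $\mathrm{rk}\,\s\dual=n-k$. This is where the hypothesis $\lambda_1=n-k$ is essential: the first row of $\lambda$ then has exactly the length of $\s\dual$, so that removing it produces a clean determinantal twist and the recursion closes with a single Schur bundle; if $\lambda_1<n-k$ the same recipe would not terminate this way. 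That the maps so obtained form a complex, $\partial_i\partial_{i+1}=0$, I would verify from the orthogonality of distinct Schur summands in the relevant equivariant $\mathrm{Hom}$ spaces, and, where those spaces fail to be one-dimensional, by a direct computation on fibres.

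For exactness the key remark is that a bounded complex of vector bundles is exact as a complex of sheaves if and only if it is exact on the fibre over every point, and that $GL(V)$ acts transitively on $\G(k,n)$; so it suffices to check exactness of the fibre complex over a single point. There the universal sequence splits, $0\to K\to V\to Q\to 0$ with $\dim Q=k+1$, the line bundle $\OO(-1)$ becomes $\K$, and the complex becomes an explicit complex of $GL(K)\times GL(Q)$-representations --- a staircase complex of vector spaces. One then identifies this with a known exact complex attached to the split sequence (a Schur, or Cauchy--Koszul, complex in the sense of Akin--Buchsbaum--Weyman), or proves its acyclicity directly by induction on the number of rows of $\lambda$: deleting the bottom row of $\lambda$ expresses the staircase complex, up to Littlewood--Richardson branching, as a tensor product of a shorter staircase complex with an exact Koszul-type complex, whose homology vanishes step by step. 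As an alternative to the fibrewise argument one may compute the cohomology of each term by Borel--Weil--Bott and argue that the hypercohomology spectral sequence can degenerate only in the way compatible with acyclicity.

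The hard part is the combinatorial core: showing that the jump/strip recipe for the pairs $(\upsilon_i,\mu_i)$ is internally consistent --- that the canonical Pieri maps it selects genuinely compose to zero and assemble into a complex --- and that the resulting fibre complex is exact. This is a Pieri-type resolution statement, and its proof is essentially an exercise in lattice paths and Littlewood--Richardson coefficients; by contrast the equivariance, the reduction to a single fibre, and the identification of the $\OO(-1)$ twist are formal.
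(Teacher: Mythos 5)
First, the point of comparison: the paper does not prove this statement at all --- it is quoted verbatim, recipe included, from Proposition 5.3 of \cite{For}, and is only used as an external input in the speculative final section. So your sketch can only be measured against Fonarev's construction, and as a proof it has a genuine gap that you in fact flag yourself: neither the differentials nor the exactness of the complex are ever established. The spaces of equivariant maps $\bigwedge^{\upsilon_{i+1}}V\otimes\Sc_{\mu_{i+1}}\QQ\longrightarrow\bigwedge^{\upsilon_i}V\otimes\Sc_{\mu_i}\QQ$ are in general of dimension greater than one (they are computed by Bott's algorithm together with the Littlewood--Richardson rule), so ``the canonical map built from Pieri multiplication'' is not yet a definition; and verifying $\partial_i\partial_{i+1}=0$ ``by orthogonality of Schur summands, or by a direct computation on fibres'' is precisely the content to be supplied, not a routine check. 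Exactness is likewise delegated either to ``a known exact complex in the sense of Akin--Buchsbaum--Weyman'', which you do not identify (the fibre of a staircase complex is not one of the standard ABW complexes in any evident way), or to an induction on the rows of $\lambda$ that is not carried out. The parts you do spell out --- exactness of a bounded complex of bundles can be tested on fibres, transitivity of the $GL(V)$-action, the sanity check that $k=1$, $\lambda^*=\emptyset$ recovers $(R_{n-1})$ --- are correct but only bite once the complex exists, so they do not close the gap.

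Two further cautions. Your explanation of the twist, that $\lambda_1=n-k$ equals $\mathrm{rk}\,\s\dual$ so ``removing the first row produces a clean determinantal twist'', is not a mechanism: the Schur functors in the sequence are applied to $\QQ$, of rank $k+1$, and deleting a row of length $\mathrm{rk}\,\s\dual$ from a partition for $\QQ$ does not by itself produce $\det\s\dual\simeq\OO(-1)$; the hypothesis $\lambda_1=n-k$ enters through Fonarev's combinatorics of the rectangle $Y_{n,k}$, not through this heuristic. And the actual proof in \cite{For} is carried out with the derived-category machinery of that paper (where the staircase complexes are built as part of the study of Lefschetz decompositions), not by the fibrewise splicing you propose; if you wish to pursue your route, the combinatorial core you defer --- pinning down the maps selected by the jump/strip recipe and proving acyclicity of the resulting equivariant complex --- is essentially the whole theorem, not an exercise.
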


Some of the vector bundles appearing in (\ref{sucesion schur}) have intermediate cohomology. However, by hand and with the help of Schubert package, one can construct a similar sequence but with better properties, since all the elements of the sequence will not have intermediate cohomology and there will be some duality conditions too.   
We give as an example the case of $\G(2,5)$.

\begin{example}\label{mejoraFonarev}{\rm 
For $\G(2,5)$ one can build the following extension by using (\ref{sucesion schur}):
$$0\rightarrow \OO(-3)\rightarrow V^*\otimes \Sc_{(3,3,2)}\QQ\dual\rightarrow\bigwedge^2 V^*\otimes \Sc_{(3,3,1)}\QQ\dual\rightarrow \bigwedge^3 V^*\otimes \Sc_{(3,3)}\QQ\dual\rightarrow$$
$$\rightarrow V^*\otimes \Sc_{(3,2)}\QQ\dual(1)\rightarrow \bigwedge^2 V^*\otimes \Sc_{(3,1)}\QQ\dual(1)\rightarrow \bigwedge^3 V^*\otimes S^3\QQ\dual(1)\rightarrow$$
$$\rightarrow V^*\otimes S^2\QQ\dual(2)\rightarrow \bigwedge^2 V^*\otimes \QQ\dual(2)\rightarrow \bigwedge^3 V^*\otimes \OO(2)\rightarrow \OO(3)\rightarrow 0$$

If we use the relation $\Sc_{(\lambda_1+r,\lambda_2+r,\lambda_3+r)}\QQ\dual\approxeq \Sc_{(\lambda_1,\lambda_2,\lambda_3)}\QQ\dual(-r)$ (since $\Sc_{(r,r,r)}\QQ\dual\approxeq \OO(-r)$) and several duality conditions such as:
\begin{itemize}
	\item $\QQ\dual(1)\approxeq \Sc_{(1,1)}\QQ$
	\item $S^2\QQ\dual(2)\approxeq \Sc_{(2,2)}\QQ$
	\item $\Sc_{(3,1)}\QQ\dual(3)\approxeq \Sc_{(3,2)}\QQ$
	\item $S^3\QQ\dual(3)\approxeq \Sc_{(3,3)}\QQ$
\end{itemize}
we obtain the following extension:
$$0\rightarrow \OO(-3)\rightarrow V^*\otimes \QQ(-3)\rightarrow\bigwedge^2 V^*\otimes S^2\QQ(-3)\rightarrow \bigwedge^3 V^*\otimes S^3\QQ(-3)\rightarrow$$
$$\rightarrow V^*\otimes \Sc_{(3,1)}\QQ(-2)\rightarrow \bigwedge^2 V^*\otimes \Sc_{(3,1)}\QQ\dual(1)\rightarrow \bigwedge^3 V^*\otimes S^3\QQ\dual(1)\rightarrow$$
$$\rightarrow V^*\otimes S^2\QQ\dual(2)\rightarrow \bigwedge^2 V^*\otimes \QQ\dual(2)\rightarrow \bigwedge^3 V^*\otimes \OO(2)\rightarrow \OO(3)\rightarrow 0$$

This extension is not nice enough since some of the bundles that appear has intermediate cohomology. Specifically, $H^3(S^3\QQ\dual(-1))\neq 0$ and $H^6(S^3\QQ(-5))\neq 0$. 
However, one can give a splitting criterion as giving for the case of Grassmannian of lines.

Nevertheless, we propose a new extension in $H^{9}(\OO(-6))$, which seems to exist, since the Chern classes fit perfectly. Its components are now simpler vector bundles without intermediate cohomology, and it is also self-dual.
$$0\rightarrow \OO(-3)\rightarrow \bigwedge^3 V\otimes \OO(-2)\rightarrow \bigwedge^2 V\otimes \QQ(-2)\rightarrow V\otimes S^2\QQ(-2)\rightarrow$$
$$\rightarrow \bigwedge^2 V^*\otimes S^2\QQ(-1)\rightarrow \bigwedge^3 V^*\otimes \Sc_{(2,1)}\QQ(-1)\rightarrow \bigwedge^2 V\otimes S^2\QQ\dual(-1)\rightarrow$$
$$\rightarrow V^*\otimes S^2\QQ\dual\rightarrow \bigwedge^2 V^*\otimes \QQ\dual(2)\rightarrow \bigwedge^3 V^*\otimes \OO(2)\rightarrow \OO(3)\rightarrow 0$$

There exist more duality conditions, for instance:
\begin{itemize}
	\item $\Sc_{(2,1)}\QQ\dual(2)\approxeq \Sc_{(2,1)}\QQ\approxeq\Sc_{(3,2,1)}\QQ(-1)$
	\item $\Sc_{(4,2)}\QQ\dual(4)\approxeq \Sc_{(4,2)}\QQ$
	\item $\Sc_{(4,1)}\QQ\dual(4)\approxeq \Sc_{(4,3)}\QQ$
\end{itemize}
In the following proposition we generalize this type of duality conditions for a general Grassmannian. 
}
\end{example}
\begin{prop}\label{rel_dualidad}
Let $\QQ\dual$ be the universal bundle of $\G(k,n)$ with rank $k+1$. Consider $\lambda$ a Young diagram with $\lambda_1$ columns. Let us fill the rectangle $(k+1)\times \lambda_1$ and call the complement of $\lambda$ in the rectangle (up to mirror) by $\alpha$. Then:
$$\Sc_{\lambda}\QQ\dual(\lambda_1)\approxeq \Sc_{\alpha}\QQ$$
\end{prop}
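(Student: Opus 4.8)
The plan is to reduce the statement to the classical fact that for a rank $r$ bundle $E$ one has $\Sc_\mu E \otimes \bigwedge^r E \simeq \Sc_{\mu + (1^r)} E$, i.e. adding a full column of length $r$ to a Young diagram twists the Schur functor by $\det E$. Here $E = \QQ\dual$ has rank $r = k+1$ and $\bigwedge^{k+1}\QQ\dual \simeq \OO(-1)$ (the case $j=k$ of the first isomorphism in Remark \ref{isomorfismosentrefibradosuniversales}), so $\Sc_\mu\QQ\dual(-1) \simeq \Sc_{\mu+(1^{k+1})}\QQ\dual$, and dually $\Sc_\mu\QQ\dual(1) \simeq \Sc_{\mu-(1^{k+1})}\QQ\dual$ whenever $\mu$ has at least $k+1$ nonzero columns, i.e. $\mu_{k+1}\geq 1$. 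Iterating, $\Sc_\lambda\QQ\dual(\lambda_1)\simeq\Sc_{\lambda-(\lambda_1^{\,k+1})}\QQ\dual$ provided $\lambda$ fits in a $(k+1)\times\lambda_1$ rectangle, which it does by hypothesis; note $\lambda - (\lambda_1^{\,k+1})$ has nonpositive entries, so this should be read via the convention that allows negative parts, or equivalently one should track the twist more carefully (see below).

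The key step is then to identify $\Sc_{\lambda - (\lambda_1^{\,k+1})}\QQ\dual$, read with the negative-parts convention, with $\Sc_\alpha\QQ$. First I would recall that for the rank $k+1$ bundle $\QQ\dual$, dualizing a Schur functor reverses and negates the partition: $(\Sc_\mu\QQ\dual)\dual \simeq \Sc_{(-\mu_{k+1},\dots,-\mu_1)}\QQ\dual \simeq \Sc_{(\mu_1-\mu_{k+1},\dots,\mu_1-\mu_1)}\QQ\dual(-\mu_1)$ — equivalently $(\Sc_\mu\QQ\dual)\dual\simeq\Sc_{\hat\mu}\QQ$ where $\hat\mu$ is the complement of $\mu$ read backwards inside the $(k+1)\times\mu_1$ rectangle. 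Applying this with $\mu=\lambda$: the complement of $\lambda$ in the $(k+1)\times\lambda_1$ rectangle, up to the mirror (reversal of rows), is exactly the $\alpha$ of the statement, and the twist bookkeeping gives precisely $\Sc_\lambda\QQ\dual(\lambda_1)\simeq(\Sc_\lambda\QQ\dual)\dual{}\dual$ rearranged into $\Sc_\alpha\QQ$. So the cleanest route is: $\Sc_\lambda\QQ\dual(\lambda_1) \simeq \big(\Sc_\lambda\QQ\dual\big)\dual \otimes \big(\bigwedge^{k+1}\QQ\dual\big)\dual{}^{\otimes(\lambda_1 - \text{something})}$ — more precisely, combine $(\Sc_\lambda\QQ\dual)\dual\simeq \Sc_{\lambda^{\mathrm c}}\QQ\dual(-\lambda_1)$, where $\lambda^{\mathrm c}_i = \lambda_1 - \lambda_{k+2-i}$ is the mirrored complement, which is $\alpha$; then $\Sc_\lambda\QQ\dual(\lambda_1)$, being the dual of $(\Sc_\lambda\QQ\dual)\dual$ twisted appropriately, equals $\Sc_\alpha\QQ\dual{}\dual = \Sc_\alpha\QQ$ after the twists cancel. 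I would write this out as a short chain of canonical isomorphisms, invoking $\bigwedge^{k+1}\QQ\dual\simeq\OO(-1)$ at each twist.

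The main obstacle is purely bookkeeping: matching the paper's informal description "fill the rectangle $(k+1)\times\lambda_1$ and call the complement (up to mirror) $\alpha$" with the algebraic identity $(\Sc_\lambda E)\dual \simeq \Sc_{\lambda^{\mathrm c}}E$ for $E$ of rank $k+1$, and verifying that the accumulated $\OO(\pm 1)$ twists from the $\det$ factors add up to exactly $\OO(\lambda_1)$ so that no residual twist survives. I would handle this by checking it first on the examples already in the text (e.g. $S^2\QQ\dual(2)\simeq\Sc_{(2,2)}\QQ$ and $\Sc_{(3,1)}\QQ\dual(3)\simeq\Sc_{(3,2)}\QQ$ for $k=2$, where $\lambda_1=2,3$ respectively and one reads the complement of $(2)=(2,0,0)$ in $3\times 2$... wait, $\lambda_1=2$, rectangle $3\times2$, complement of $(2,0,0)$ is $(2,2,0)$ mirrored to $(2,2,0)$, giving $\Sc_{(2,2)}\QQ$ — consistent), confirming the conventions, and then writing the general argument as the same computation with $\lambda$ in place of the explicit diagram. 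No hard analysis is involved; the content is entirely representation-theoretic/combinatorial and follows from the rank-$(k+1)$ identity $\Sc_\mu\QQ\dual\otimes\OO(-1)\simeq\Sc_{\mu+(1^{k+1})}\QQ\dual$ together with Serre-type duality of Schur functors of a fixed-rank bundle.
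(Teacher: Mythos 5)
Your proposal is correct in substance but pins down the statement by a genuinely different mechanism than the paper. The paper quotes the duality relation of Section 1.8 of \cite{KAP} only to conclude that $\Sc_\lambda\QQ\dual(d)\approxeq\Sc_\alpha\QQ$ for some a priori unknown twist $d$, and then determines $d$ indirectly: the rank formula shows that the only rank-one Schur functors $\Sc_\nu\QQ\dual$ are those with $\nu$ a rectangle $(c,\ldots,c)$ of length $k+1$, and the Littlewood--Richardson expansion of $\Sc_\lambda\QQ\dual\otimes\Sc_\alpha\QQ\dual$ (which must contain the summand $\OO(-d)$) admits only the rectangle $(\lambda_1,\ldots,\lambda_1)$, forcing $d=\lambda_1$. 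You instead compute everything directly from the two standard rank-$(k+1)$ identities $\Sc_\nu E\otimes\det E\simeq\Sc_{\nu+(1^{k+1})}E$ and $\Sc_\nu(E\dual)\simeq\Sc_{(-\nu_{k+1},\ldots,-\nu_1)}(E)$ together with $\det\QQ\dual\simeq\OO(-1)$ (Remark \ref{isomorfismosentrefibradosuniversales}); the clean chain is $\Sc_\lambda\QQ\dual\simeq\Sc_{(-\lambda_{k+1},\ldots,-\lambda_1)}\QQ\simeq\Sc_\alpha\QQ\otimes(\det\QQ)^{-\lambda_1}\simeq\Sc_\alpha\QQ(-\lambda_1)$, which is the proposition after twisting by $\OO(\lambda_1)$. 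Your route is more elementary and self-contained (no LR rule, no rank formula, no appeal to \cite{KAP}) and makes the value $\lambda_1$ of the twist transparent, while the paper's route is more robust against weight and sign conventions, since it only needs to recognize a uniquely determined line-bundle summand. Indeed, two of your intermediate formulas carry exactly such sign slips: because $\det\QQ\dual\simeq\OO(-1)$, the correct statements are $(\Sc_\lambda\QQ\dual)\dual\simeq\Sc_{\lambda^{\mathrm c}}\QQ\dual(\lambda_1)$, not $(-\lambda_1)$, and $(\Sc_\mu\QQ\dual)\dual\simeq\Sc_\mu\QQ\simeq\Sc_{\hat\mu}\QQ\dual(\mu_1)$ rather than $\Sc_{\hat\mu}\QQ$ on the nose; these are precisely the bookkeeping points you flag, they do not affect the viability of the method, and your check on the examples $S^2\QQ\dual(2)\simeq\Sc_{(2,2)}\QQ$ and $\Sc_{(3,1)}\QQ\dual(3)\simeq\Sc_{(3,2)}\QQ$ fixes the conventions correctly.
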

\begin{proof}
By Section $1.8$ of \cite{KAP} we have the duality relation for vector spaces, hence $\Sc_{\lambda}\QQ\dual(d)\approxeq \Sc_{\alpha}\QQ$ for a twist $d$. In particular $\OO(-d)$ is a direct summand of $\Sc_{\lambda}\QQ\dual\otimes \Sc_{\alpha}\QQ\dual$. To compute $d$ we use Littlewood-Richardson's rule (see (6.7) of \cite{FH}) to this tensor product. We want to see that the only direct summand of $\Sc_{\lambda}\QQ\dual\otimes \Sc_{\alpha}\QQ\dual$ with rank $1$ is $\OO(-d)$.

First let us compute when a Schur functor $\Sc_\nu \QQ\dual$ where $\nu=(\nu_1,\ldots,\nu_r)$, with $r\leq k+1$ has rank $1$ (considering $\nu_{r+1}=\ldots=\nu_{k+1}=0$). Now let us use the formula that calculates the rank of $\Sc_\nu \QQ\dual$ (see Theorem $6.3 (1)$ of \cite{FH}):
$$rank(\Sc_\nu \QQ\dual)=\prod_{1\leq i< j\leq k+1}\frac{\nu_i-\nu_j+j-i}{j-i}$$ 
If we want this expression to be equal to $1$, each of the quotient $\frac{\nu_i-\nu_j+j-i}{j-i}$ must be $1$ since $\nu_i-\nu_j>0$ and $j-i>0$ always. This means that $\nu_i=\nu_j$ for all $1\leq i< j\leq k+1$. Therefore, the length of $\nu$ has to be $k+1$ and all the $\nu_i$ has to be equals. In that case $\Sc_\nu\QQ\dual\approxeq \OO(-\nu_1)$.

Now, by Littlewood-Richardson's rule and the previous observation, the only $\alpha-$expansion of $\lambda$ that gives a line bundle has to have length $k+1$ and all the entries equal. Consequently, the only possibility is that all these entries are equal to $\lambda_1$.
\end{proof}

\begin{example}{\rm
As an example we compute the previous relation for $\Sc_\lambda
\QQ\dual$ with $\lambda=(4,1)$ for $\G(2,5)$. We have to fill the $3\times 4$ rectangle (with yellow boxes). Finally, we mirror the complement of $\lambda$ and call the result $\alpha$, that corresponds with the partition $(4,3)$.
\begin{equation*}
	\ytableausetup
	{boxsize=1.25em}
	\ytableausetup
	{aligntableaux=top}
	\lambda= \begin{ytableau}
				{} & {} & {} & {}\\
				{} 				
				\end{ytableau}
		\quad\quad
	\begin{ytableau}
			{} & {} & {} & {}\\
			{} & *(yellow) & *(yellow) & *(yellow)\\
			*(yellow) & *(yellow) & *(yellow) & *(yellow)
			\end{ytableau}
	\quad\quad
	\alpha=\begin{ytableau}
				*(yellow) & *(yellow) & *(yellow) & *(yellow)\\
				*(yellow) & *(yellow) & *(yellow)
				\end{ytableau}
	\end{equation*}
Since the number of columns of $\lambda$ is $4$ the duality relation is:
$$\Sc_{(4,1)}\QQ\dual(4)\approxeq \Sc_{(4,3)}\QQ$$   }
\end{example}

\begin{remark}{\rm
With the same arguments used in Example \ref{mejoraFonarev}, it seems that one can extend this type of sequence for the case of $\G(k,n)$ by gluing some particular sequences that are self-dual. For $i=0,\ldots,k$, A. Fonarev give the extension of length $n-k$ from $\Sc_{(n-k,\stackrel{i+1}{\ldots},n-k)}\QQ\dual$ to $\Sc_{(n-k,\stackrel{i}{\ldots},n-k)}\QQ\dual(1)$. For a general $i$ the pieces we need seem to be the following:
\begin{equation*}
0\longrightarrow \Sc_{(n-k,\stackrel{i+1}{\ldots},n-k)}\QQ\dual \longrightarrow \bigwedge^{n-i}V\otimes \Sc_{(n-k-1,\stackrel{i+1}{\ldots},n-k-1)}\QQ\dual \longrightarrow 
\end{equation*}
\begin{equation*}
\longrightarrow \bigwedge^{n-1-i}V\otimes \Sc_{(n-k-1,\stackrel{i}{\ldots},n-k-1,n-k-2)}\QQ\dual\longrightarrow\ldots\longrightarrow \bigwedge^{k+2-i}V\otimes \Sc_{(n-k-1,\stackrel{i}{\ldots},n-k-1,1)}\QQ\dual\longrightarrow
\end{equation*}
\begin{equation*}\label{new sequence}
\longrightarrow \bigwedge^{k+1-i} V\otimes \Sc_{(n-k-1,\stackrel{i}{\ldots},n-k-1)}\QQ\dual\longrightarrow\Sc_{(n-k,\stackrel{i}{\ldots},n-k)}\QQ\dual(1)\longrightarrow 0\tag{$C_i$}
\end{equation*}
Glueing the $k+1$ pieces corresponding to $C_k, C_{k-1},\ldots,C_1,C_0$ we obtain the Serre extension we are looking for (of length $(n-k)\times (k+1)$).
More precisely, the $i-$th extension $C_i$ should be, up to a twist, the dual of the $(k+1-i)-$th extension $C_{k+1-i}$. Furthermore, $\bigwedge^i V\approxeq\bigwedge ^{n+1-i}V^*$.

Using Proposition \ref{rel_dualidad} to the vector bundles that appear in $C_i$ one can observe that:
\begin{equation*}
\Sc_{(n-k,\stackrel{i+1}{\ldots},n-k)}\QQ\dual(n-k)\approxeq \Sc_{(n-k,\stackrel{k-i}{\ldots},n-k)}\QQ
\end{equation*}  }
\end{remark}

\end{document}